\newtheorem{thm}{Theorem}[section]
\newtheorem{cor}{Corollary}[thm]
\newtheorem{prop}[cor]{Proposition}
\newtheorem{lemma}[cor]{Lemma}
\newtheorem*{defi}{Definition}
\newtheorem*{remark}{Remark}
\newcommand{\e}{\varepsilon}
\renewcommand{\r}{\rho}
\newcommand{\R}{\mathbb{R}}
\newcommand{\dt}{\partial_t}
\newcommand{\dx}{\partial_x}
\newcommand{\dxx}{\partial_{xx}}
\newcommand{\dz}{\partial_z}
\newcommand{\dzz}{\partial_{zz}}
\newcommand{\norm}[1]{\left\lVert#1\right\rVert}
\newcommand{\s}{\sigma}
\newcommand{\sgn}{\text{sign}}
\newcommand{\thresh}{\text{th}}
\title{\textsc{When Self-Generated Gradients interact with Expansion by Cell Division and Diffusion. Analysis of a Minimal Model.}}
\author[1]{\textsc{Mete Demircigil}}
\affil[1]{\footnotesize{\textsc{Institut Camille Jordan (ICJ), UMR 5208 CNRS \& Universit\'e Claude Bernard Lyon~1, and Equipe-Projet Inria Dracula, Lyon, France.}
\textit{Email address:} \texttt{mete.demircigil@univ-lyon1.fr}}}
\date{February 23, 2022}
\begin{document}
\maketitle
\begin{abstract}
We investigate a minimal model for cell propagation involving migration along self-generated signaling gradients and cell division, which has been proposed in an earlier study. The model consists in a system of two coupled parabolic diffusion-advection-reaction equations. 
Because of a discontinuous advection term, the Cauchy problem should be handled with care. We first establish existence and uniqueness locally in time through the reduction of the problem to the well-posedness of an ODE, under a monotonicity condition on the signaling gradient.
Then, we carry out an asymptotic analysis of the system. All positive and bounded traveling waves of the system are computed and an explicit formula for the minimal wave speed is deduced.
An analysis on the inside dynamics of the wave establishes a dichotomy between pushed and pulled waves depending on the strength of the advection. 
We identified the minimal wave speed as the biologically relevant speed, in a weak sense, that is, the solution propagates slower, respectively faster, than the minimal wave speed, up to time extraction.
Finally, we extend the study to a hyperbolic two-velocity model with persistence.
\end{abstract}

\section{Introduction}

In this paper, we are mainly concerned with the investigation of spreading properties for a one-dimensional parabolic system of two diffusion-advection-reaction equations, with $t\geq0, x\in \R$,

\begin{subequations}\label{diffusivemodel:main}
\begin{align}\label{diffusivemodel:a}
&\dt \r - \dxx \r + \dx ( \chi \sgn(\dx N) \mathbbm{1}_{N\leq N_\thresh  }\r) =  \mathbbm{1}_{N>N_\thresh} \r  \\ 
&\dt N - D \dxx N =-  \r N. \label{diffusivemodel:b}
\end{align}
\end{subequations}
Here $\r(t,x)$ describes a cell population subject to diffusion, with a diffusion constant normalized to $1$, and either growth or advection depending on its position. The switch between growth and advection is mediated by the value $N(t,x)$ of a chemical nutrient field: for a given threshold value $N_\thresh$, if $N>N_\thresh$, the population $\r$ is subject to growth with constant rate normalized to $1$, and if $N\leq N_\thresh$, the population is subject to advection with constant speed $\chi>0$ in the direction of the gradient $\dx N$. This advection speed results from biases in individual cell trajectories, which are averaged at the macroscopic level. The chemical nutrient $N$ undergoes a reaction-diffusion equation through a simple consumption term $-\r N$, with the consumption rate per cell normalized to $1$. 
All along the article, we work in the setting, where $N$ is increasing in space, $\lim_{x\to -\infty}N(t,x)=0$ and $\lim_{x\to +\infty}N(t,x)=1>N_\thresh$, by normalizing the limit to $1$. Under these conditions, the cell population propagates from left to right. Furthermore, we introduce the (unique) position of the threshold $\bar{x}(t)$, such that:
\begin{align}
\label{def-xt}
N(t,\bar{x}(t))=N_\thresh.
\end{align}
In Equation (\ref{diffusivemodel:a}), the advection term is discontinuous, but the flux should still be continous. Hence in particular at the interface $\bar{x}(t)$ the flux must be continuous. More precisely, consider a weak solution to Equation (\ref{diffusivemodel:a}), which is continuous and sufficiently regular on either side of the interface $\bar{x}(t)$. By a Rankine-Hugoniot type argument, $\r$ satisfies the following $C^1$-jump relation at the interface $\bar{x}(t)$:
\begin{align}
\label{C1discRel}
\dx\r\left(t,\bar{x}(t)^+\right)-\dx\r\left(t,\bar{x}(t)^-\right)=-\chi \r(t,\bar{x}(t)).
\end{align}
A typical initial datum $(\r^0,N^0)\in L^\infty(\R)^2$ for System (\ref{diffusivemodel:main}) satisfies nonnegativity, \textit{i.e.} $\r^0,N^0\geq 0$. In addition, we assume that $\r^0$ satisfies the $C^1$-jump relation (\ref{C1discRel}) and is bounded by an exponentially decreasing function at $x=+\infty$. Furthermore, $\dx N^0>0$, $\lim_{x\to -\infty}N^0(x)=0$ and $\lim_{x\to +\infty}N^0(x)=1>N_\thresh$.

\begin{figure}
\begin{center}

\begin{subfigure}{.5\textwidth}
  \centering
\includegraphics[width=9cm]{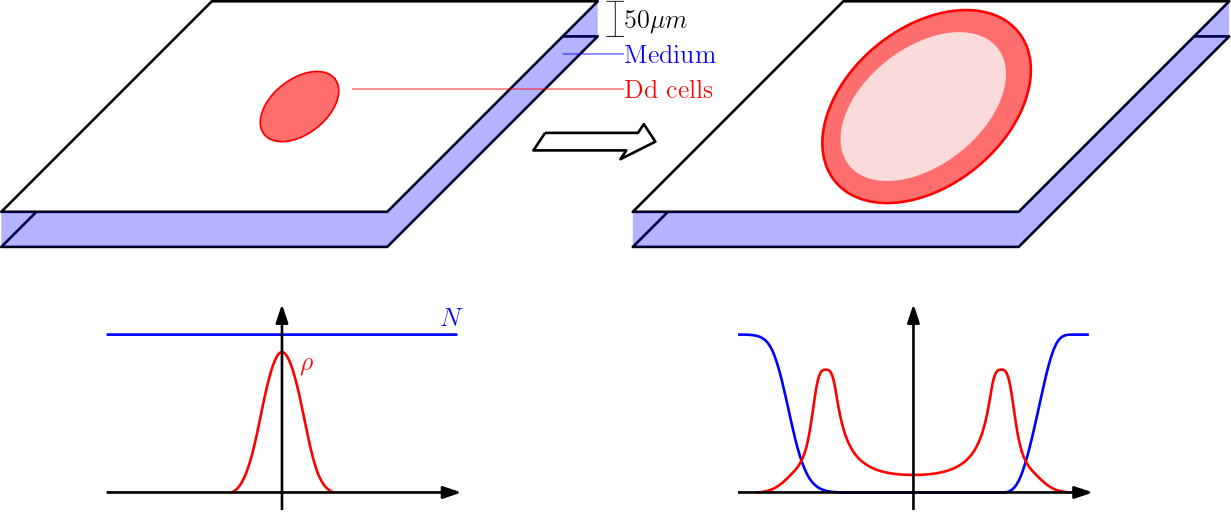}
  \caption{}
  \label{Fig-experience:sfig1}
\end{subfigure}%
\begin{subfigure}{.5\textwidth}
  \centering
\includegraphics[width=7cm]{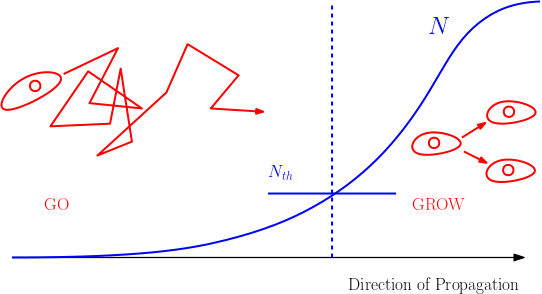}
  \caption{}
  \label{Fig-experience:sfig2}
\end{subfigure}
\caption{ {(a)}: A schematic representation of the experiment carried out in \cite{cochet2021}. Cells are confined between two narrowly spaced plates and quickly consume available oxygen, so that the colony experiences self-induced hypoxic conditions. This, in turn, triggers outward migration of the colony under the form of a ring expanding at constant speed over long periods of time.
{(b)}: A cartoon  representation of the 'Go or Grow' hypothesis. Cells switch between two behaviors, depending on the level of oxygen. When oxygen concentration is above some threshold $N_\thresh$, cells divide and follow Brownian trajectories (this is the 'Grow' behavior). In contrast, when oxygen concentration drops below $N_\thresh$, cells stop dividing and follow a biased Brownian motion towards higher levels of oxygen (this is the 'Go' behavior).   
\label{Fig-experience} }
\end{center}
\end{figure}

System (\ref{diffusivemodel:main}) was introduced in \cite{cochet2021} by the author and collaborators as a minimal model for cell collective behaviour triggered by a self-generated gradient. In this study, the following emerging behavior of \textit{Dictyostelium discoideum} cells (\textit{Dd} cells in short) in hypoxic conditions was observed: when a colony of \textit{Dd} cells is confined between two narrowly spaced plates, \textit{Dd} cells form a dense ring moving outwards. After a brief transitory phase, the ring of cells moves at constant speed and constant density over the time course of the experiment (see Figure \ref{Fig-experience:sfig1}). The authors emitted the hypothesis that the quick consumption of oxygen by \textit{Dd} cells  exposes them to hypoxia, \textit{i.e.} lack of oxygen, and in turn induces \textit{aerotaxis}, \textit{i.e.} a bias in the individual trajectories of \textit{Dd} cells towards higher oxygen concentrations, leading to a macroscopic outward motion. We refer to \cite{cochet2021,biondo2021} (see also \cite{rieu2022}) for a biological discussion on the hypotheses under which the observed phenomenon may arise. The scientific approach in \cite{cochet2021} leading to this minimal model can be described as follows: (i) Experimentally, it was observed that cells exhibit various individual behaviors accross the colony. (ii) In the model,  two particular behaviors were retained as an alternative: either cell division, or migration towards oxygen. (iii) It was postulated that the transition between the two behaviors depends on a single threshold (see Figure \ref{Fig-experience:sfig2}). Indeed, it is for instance well known that \textit{Dd} cells do not have enough energy to divide, when oxygen is lacking. The term 'Go or Grow' was coined to describe this dichotomy, by analogy with a similar mechanism in the modeling of glioma cells \cite{hatzikirou2012}, which nevertheless is of another nature, as it describes a density-dependent rather than an oxygen-dependent switch between diffusion and cell division. 

The mechanism of cell division in System (\ref{diffusivemodel:main}) resembles to some extent to standard reaction-diffusion models, among which the classical Fisher/Kolmogorov-Petrovsky-Piskunov \cite{fisher1937,kolmogorov1937,aronson1975} is a prototype. The F/KPP model describes in particular expansion of a cell population undergoing cell division and diffusion. However, while in standard reaction-diffusion models, growth is limited by a density-mediated mechanism (\textit{e.g.} a quadratic saturation term in the F/KPP case), here it is limited via the dependence on the chemical nutrient $N$, which leads to a similar regulatory mechanism: the more cells divide, the more they consume the chemical nutrient $N$, the more their growth is limited.

Interestingly, this minimal model is sufficient to describe the propagation of a wave of cells, in a context of self-generated oxygen gradients. In fact, System (\ref{diffusivemodel:main}) exhibits explicit traveling wave solutions and the minimal wave speed can be computed explicitly (see \cite{cochet2021} and also Section \ref{Sect-TW}). This gives rise to the following formula for the minimal wave speed,
\begin{align}
\label{speedformula}
\s^*=\left\{ \begin{array}{ll}
\chi+\frac{1}{\chi} & \text{ if } \chi>1 \\
2 & \text{ if } \chi \leq 1
\end{array}. \right. 
\end{align}
Thus, there are two different regimes: in the regime $\chi\leq 1$, which we call the \textit{small bias} regime, the speed corresponds to the well-known F/KPP speed, $\s_{F/KPP}:=2$, whereas in the regime of \textit{large bias}, $\chi>1$, the wave speed $\chi+\frac{1}{\chi}$ is greater than $\s_{F/KPP}$. The threshold is reached when the advection speed $\chi$ is equal to half the F/KPP speed $\s_{F/KPP}$: there, the two expressions coincide. 

System (\ref{diffusivemodel:main}) combines two distinct propagation phenomena, one being aerotaxis of cells triggered by the self-generated gradient, and the other being expansion by division-diffusion, such as described by the F/KPP model.
Biologically, it is therefore relevant to ask how these two propagation phenomena combine with each other (see \cite{cochet2021,biondo2021}). Since the collective propagation speed $\s^*$ is always higher than the advection speed $\chi$, cell division has a net positive effect on the propagation of cells. In parallel, in the regime of large bias $\chi>1$, aerotaxis has also in turn a net positive effect on the propagation speed, compared to mere expansion by division-diffusion, which is in agreement with the findings by \cite{cremer2019}. In that case, we may refer to the wave as an aerotactic wave. Yet, in the regime of small bias $\chi\leq 1$, the propagation is driven by division-diffusion, and aerotaxis does not contribute to the wave speed: in that case, we refer to the propagation as a F/KPP wave.

Recently, numerous works, among which \cite{tweedy2020,insall2020,cremer2019,liu2019}, have investigated propagation of cells under self-generated gradients and shown its biological relevance as an efficient migration strategy. Notably in \cite{cremer2019}, the authors observe that the the combination of \textit{chemotaxis} (the response to chemical gradients, which is very analogous to aerotaxis in a mathematical setting) and cell division, leads to an enhanced expansion. 

Other works have investigated chemotactic waves  in  \textit{Escherichia coli} bacteria (and references therein \cite{saragosti2011,calvez2019}). These works have proposed a description at a mesoscopic scale, through a kinetic model, and at a macroscopic scale, through a parabolic model, analogous to some extent with System (\ref{diffusivemodel:main}). However, the main difference is that in order to sustain the propagation of the wave, two attractants are required in \cite{saragosti2011,calvez2019}, whereas here the single attractant $N$ is sufficient. Additionally in the case of \textit{E. coli} cell division is negligible, while here it plays a key ingredient. We refer to \cite{sublet2021} for a discussion on different models of propagation through self-generated signaling gradients.

In parallel, the Keller-Segel model \cite{keller1970,keller1971} has been widely used in order to give a description of cells undergoing chemotaxis. In \cite{nadin2008}, a variation on this model was proposed by adding a density-dependent growth term to the model. This Keller-Segel model with growth term has been the subject of numerous investigations in recent years, among which the works \cite{nadin2008,salako2017,salako2020,li2019,bramburger2021}. These types of models combine chemotaxis and cell division and exhibit traveling waves under some conditions on the parameters. Nevertheless, chemotactic self-aggregation (in the aformentioned studies) and aerotaxis (in the present study) lead to biases in opposite directions at the edge of expansion front. Recently, in the works \cite{hamel2020,henderson2021}, the authors have investigated the case of \textit{negative chemotaxis}, where the bias induced by chemotaxis is in the same direction than the propagation induced by division-diffusion and thus bears a similarity to the aerotactic advection term. In \cite{henderson2021}, the author was able to obtain bounds on the propagation speed. Since the considered model is different from ours, this result cannot be directly compared to our explicit speed Formula (\ref{speedformula}), but in the regime of small negative chemotaxis, the wave speed exactly agrees, \textit{i.e.} $\s^*=2$, and the propagation is caused by division-diffusion. In the regime of large negative chemotaxis, the wave speed increases, which is also in agreement with our findings.

Interestingly, Formula (\ref{speedformula}) coincides with the formula for the wave speed obtained in the monostable cubic reaction-diffusion equation with reaction term $f(u)=u(1-u)(1+2\chi^2u)$ \cite{hadeler1975,ben-jacob1985} or the Burgers-FKPP equation of the form $\dt u -\dxx u + 2\chi u\dx u = u(1-u)$ \cite{an2021}. We also mention a class of free boundary problems introduced in \cite{berestycki2018}, that is linked to the large-population limit of the $N$-Branching Brownian Motion \cite{demasi2019}. The authors of \cite{berestycki2018} show that the following free boundary problem for $(u,\mu_t)\in C(\R_+\times\R)\times C(\R_+)$ sufficiently regular admits Formula (\ref{speedformula}) as minimal wave speed:
\begin{align*}
\begin{array}{l}
\dt u - \dxx u =u \text{, for }x>\mu_t \\
u(t,\mu_t)=1 \text{ and } \dx u(t,\mu_t)=-\chi
\end{array}
\end{align*}

In \cite{garnier2012,roques2012}, the authors have investigated the inside dynamics of traveling waves in reaction-diffusion equations. They have proposed a new characterization of the categorization between \textit{pushed} and \textit{pulled} waves. A pushed wave is subject to a significant contribution from the overall population to the net propagation, whereas a {pulled} wave is driven by growth and diffusion of the population at the edge of the front with negligible contribution from the overall population. In particular, it was shown in \cite{garnier2012} that in the monostable cubic reaction-diffusion equation with reaction term $f(u)=u(1-u)(1+2\chi^2u)$ a transition from a {pulled} to a {pushed} nature of the wave exists at $\chi=1$. 
The same dichotomy was observed at play in System (\ref{diffusivemodel:main}), essentially via numerical simulations \cite{cochet2021}: in the case of small bias $\chi\leq 1$, the traveling wave is {pulled}. In contrast, in the case of large bias $\chi>1$, the traveling wave is {pushed}. A transition from pulled to pushed waves has recently been of interest in the works \cite{pluess2011,gandhi2016,gandhi2019}, but in these studies they are due to structural modifications of the system, \textit{i.e.} the reaction term goes from a monostable structure to a bistable structure. However here, as well as in the case of the monostable cubic case and the Burgers-FKPP equation \cite{an2021}, it is a transition induced by the relative size of the parameters without changing the nature of the stable states.

Of note, the definitions of {pulled} and {pushed} waves can vary in the literature. The historical definition as proposed in \cite{stokes1976} (see also \cite{rothe1981,vansaarloos2003}) is based on the criterion whether the minimal speed $\s^*$ is equal to the speed of the linearized front around the steady state $0$ ({pulled}), or greater than this speed ({pushed}). The definition proposed in \cite{garnier2012,roques2012} is based in turn on the inside dynamics of the traveling waves. In this paper, we follow the latter definition and more precisely the study of the inside dynamics of $\r$, that we believe is more appropriate to the study of a system of equations, such as System (\ref{diffusivemodel:main}). However, both definitions coincide in our case, since linearizing System (\ref{diffusivemodel:main}) around its leading edge yields a constant chemical nutrient field $N\equiv 1$ and the linear F/KPP equation on $\r$, which gives rise to a front traveling at speed $\s=2$. 

In this paper, we are mainly concerned with the study of the parabolic System (\ref{diffusivemodel:main}), that can be viewed as a minimal model on a macroscopic scale including self-generated signaling gradients and cell division. However, following the discovery of the run and tumble motion in \textit{E. coli} \cite{berg1972}, 
it may be relevant to model collective motion of micro-organisms on a mesoscopic scale through kinetic transport equations, see for instance \cite{stroock1974,alt1980-1,alt1980-2,othmer1988,chalub2004}. In particular in \cite{calvez2019}, the author has investigated the existence of traveling waves in \textit{E. coli} populations propagating in microchannels. In \textit{Dd} cells, persistent motion is observed as well \cite{cochet2021}. Therefore, it is very natural to propose a kinetic model for their study, involving free transport and reorientations of cells. Hence, by analogy with the aforementioned studies, we propose the following kinetic model:
\begin{subequations}\label{Gkineticmodel:main}
\begin{align}\label{Gkineticmodel:a}
&\dt f(t,x,v)+v\dx f(t,x,v)=\lambda \left(M(v;N,\dx N)\r-f \right)+r(N)\r \\ 
&\dt N - D \dxx N =-   \r N, \label{Gkineticmodel:b}
\end{align}
\end{subequations}
where $v\in V$, a compact subset of $\R$ and $\r(t,x)=\frac{1}{|V|}\int_V f(t,x,v)dx$. Equation (\ref{Gkineticmodel:a}) describes the evolution of the mesoscopic density of cells, that undergo cell division and persistent motion: cells move with velocity $v$ and at a constant rate $\lambda$ cells reorient themselves according to the probability distribution described by the Maxwellian $M(v;N, \dx N)$. In addition, cells divide with rate $r(N)$, depending on the ambient oxygen level, and the new cells have a velocity that is drawn from the uniform distribution on $V$. In particular, we can adapt the 'Go or Grow' hypothesis to Equation (\ref{Gkineticmodel:a}): for $N<N_\thresh$, set cell division to zero and a fixed Maxwellian distribution with mean $\chi \sgn(\dx N)$; for $N>N_\thresh$, set a fixed nonzero cell division rate and a fixed Maxwellian with zero mean. 

Whilst the general study of System (\ref{Gkineticmodel:main}) is postponed to future investigations, we analyze in the present study the two-velocity case, which we will refer to as the two-velocity system with persistence. In fact, we consider $V=\{\pm \e^{-1}\}$, with rescaled velocity and System (\ref{Gkineticmodel:main}) becomes a system of two hyperbolic equations for $f^{\pm}(t,x):=f(t,x,\pm\e^{-1})$ and a parabolic equation for $N$, with $t\geq 0, x\in \R$,
\begin{subequations}\label{kineticmodel:main}
\begin{align}\label{kineticmodel:a}
&\dt f^+ + \e^{-1} \dx f^+  = \e^{-2} \left( M(+\e^{-1};N,\dx N)\r - f^+\right) + \mathbbm{1}_{N>N_\thresh} \r  \\
\label{kineticmodel:b} 
&\dt f^- - \e^{-1} \dx f^-  = \e^{-2}\left(M(-\e^{-1};N,\dx N)\r-f^-\right) + \mathbbm{1}_{N>N_\thresh} \r  \\ 
&\dt N - D \dxx N =-   \r N, \label{kineticmodel:c}
\end{align}
\end{subequations}
where $\r:=\frac{f^++f^-}{2}$ and,
\begin{align*}
 M(\pm \e^{-1}; N, \dx N)= \left\{\begin{array}{ll}
 1 & \text{ if }N>N_\thresh \\
  1\pm \e \chi  & \text{ if }N\leq N_\thresh\text{ and } \dx N\geq 0 \\
  1\mp \e\chi & \text{ if }N\leq N_\thresh\text{ and } \dx N< 0
\end{array}, \right. 
\end{align*}
with $\chi<\e^{-1}$ and, up to a scale of units, $r(N)=\mathbbm{1}_{N>N_\thresh}$, $\lambda=\e^{-2}$ and the mean of the Maxwellian distribution equal to $\pm\e\chi$, when $N\leq N_\thresh$. 

The parabolic System (\ref{diffusivemodel:main}) and the two-velocity System with persistence (\ref{kineticmodel:main}) are linked through the so-called parabolic scaling limit. Indeed, taking the limit $\e\to 0$ of Equations (\ref{kineticmodel:main}) leads, at least in a formal sense, to Equations (\ref{diffusivemodel:main}). We refer to \cite{chalub2004} for a rigorous derivation in a general chemotaxis model without cell division.

\section*{Results and Strategies of Proof}

The present article contains on the one hand an analysis of the well-posedness of System (\ref{diffusivemodel:main}) locally in time and on the other hand an asymptotic analysis of System (\ref{diffusivemodel:main}), including: the computation of traveling wave solutions, the study of the inside dynamics of the traveling waves, as well as a weak characterization of the asymptotic behavior of the spreading speed in the Cauchy problem. We also characterize traveling wave solutions for the two-velocity System with persistence (\ref{kineticmodel:main}). All along the article we work in the setting where $N$ is increasing in space and cells propagate from left to right. Concerning the well-posedness we give a non-optimal criterion on the initial datum $N^0$ under which monotonicity of $N$ is preserved locally in time and globally in space. However, for the rest of the article, monotonicity of $N$ is a restrictive assumption in our study.

In Section \ref{Sect-Exist}, we prove an existence and uniqueness result for the parabolic System (\ref{diffusivemodel:main}) locally in time under the assumption that $N$ is initally monotonic. Because of the discontinuity of the advection coefficients, involving the coupling with the signaling gradient, direct application of Banach's Fixed point Theorem seems not directly applicable. To circumvent this delicate issue, we use the monotonicity of $N$ and the definition of the threshold position $\bar{x}(t)$ (see Equation (\ref{def-xt})) and apply Banach's Fixed Point Theorem to the curve $\bar{x}(\cdot)$. Our strategy relies on an endpoint estimate of $N$ in $W^{3,\infty}$, which is out of the range of textbook estimates (to the best of our knowledge), that we achieve by a careful handling of the singularity at the interface. This reduction to an equation on the dynamics of a curve, coupled with a PDE, is reminiscent of studies in one-dimensional free boundary problems (see, \textit{e.g.} Chapter 3 in \cite{fasano2008} on the Stefan problem, \cite{lee2017} in the context of front propagation, or \cite{mirrahimi2015} in the context of mutation-selection dynamics in evolutionary biology).

In Section \ref{Sect-TW}, we exhibit all positive and bounded traveling wave solutions for Equations (\ref{diffusivemodel:main}), which propagate from left to right, \textit{i.e.} all stationary solutions in the frame $(t,z)=(t,x-\s t)$, with $\s\geq0$. This completes the preliminary analysis performed in \cite{cochet2021}. In that case, Equation (\ref{diffusivemodel:a}) reduces to a piecewise constant second-order differential equation. All traveling wave profiles of $\r$ are a concatenation of a constant profile on the left side and an exponentially decreasing profile on the right side. In both the small and large bias regimes, there exists a minimal velocity $\s^*$ for traveling waves, given by Formula (\ref{speedformula}). For each $\s\geq \s^*$, there exists an associated wave profile $(\r^\sigma,N^\sigma)$, whose exponential decay at $z=+\infty$ is slower than the decay of the profile associated to the minimal velocity $\s^*$, which will be a crucial observation for Section \ref{Sect-Asympt}. 

In Section \ref{Sect-Inside}, we investigate the inside dynamics of the traveling waves. We introduce the formalism of neutral fractions \cite{garnier2012,roques2012} and extent it to System (\ref{diffusivemodel:main}). The methodology consists in studying the evolution of a fraction $\nu=\frac{\r}{\r^{\sigma}}$ of the traveling wave, relative to the stationary dynamics in the moving frame prescribed by the traveling wave solution $\r^\sigma$. This gives rise to a linear parabolic equation,
\begin{align*}
\dt \nu + L \nu =0.
\end{align*}
In the case of large bias ($\chi>1$), the elliptic operator $L$ is self-adjoint in a weighted $L^2$-space and has the following spectral properties: $0$ is a simple eigenvalue, whose eigenspace is spanned by the constants, and the operator $L$ has a spectral gap. This leads to the conclusion that every neutral fraction converges exponentially to a constant in a weighted $L^2$-norm, which constitues the signature of a pushed wave, according to \cite{garnier2012,roques2012}. In contrast, in the regime of small bias ($\chi\leq 1$), under the condition that $\nu_0\r^{\sigma^*}$ is square-integrable at $z=+\infty$, the solution $\nu$ converges to $0$, which is the signature of a pulled wave. To do so, we use an energy method in an $L^2$-setting to show uniform convergence to $0$ on intervals of the form $[a,+\infty)$.

In Section \ref{Sect-Asympt}, we give a weak description of the asymptotic behavior of solutions to System (\ref{diffusivemodel:main}). In fact, if we define the instantaneous spreading speed to be $\dot{\bar{x}}(t)$, we show that for initial conditions bounded above by a multiple of $\r^{\s^*}$ and under the technical assumption that $\dot{\bar{x}}\in L^\infty(\R_+)$, we have that,
\begin{align*}
 \liminf_{t\to +\infty} \dot{\bar{x}}(t) \leq \s^*, \text{ and }
 \limsup_{t\to +\infty} \dot{\bar{x}}(t) \geq \s^*.
\end{align*} 
In other words, for an inital cell profile, whose exponential decay is faster than $\r^{\s^*}$, up to a time extraction, the cell profile spreads either slower or quicker than the minimal wave speed $\s^*$. Hence, an important conclusion of this Section is that for biologically relevant initial conditions (\textit{e.g.} a profile, whose support is bounded above), the only reasonable candidate for convergence to a traveling wave profile, is the one associated to the minimal wave speed $\s^*$. However, convergence in a proper sense to the traveling wave profile remains an open problem, a major difficulty being notably the lack of a suitable comparison principle.

Finally, in Section \ref{Sect-Kinetic}, we compute all subsonic traveling wave solutions, \textit{i.e.} $\s<\e^{-1}$, for System (\ref{kineticmodel:main}). Subsonic traveling wave solutions exist if and only if $\e^{-1}>1$. The structure of these solutions then follows \textit{mutatis mutandis} the structure of the solutions for the parabolic System (\ref{diffusivemodel:main}). In particular we find a similar expression for the minimal wave speed,
\begin{align}
\label{kinspeedformula}
\s^*=\frac{1}{1+\e^2}\cdot\left\{ \begin{array}{ll}
\chi+\frac{1}{\chi} & \text{ if } \chi \in (1,\e^{-1}) \\
2 & \text{ if }  \chi \leq 1
\end{array}. \right. 
\end{align}
The proof consists in solving piecewise constant linear differential equations. Furthermore, the velocity formula for $\chi \leq 1$ coincides with the velocity of traveling waves in two-velocity models with a reaction term, but without advection (see for instance \cite{hadeler1988,bouin2014}).

\section{Existence and Uniquess of Solutions for the Parabolic Model}
\label{Sect-Exist}

In this Section, we establish existence and uniqueness locally in time for the parabolic System (\ref{diffusivemodel:main}), under certain conditions. The main difficulty to prove such a result stems from the singular advection term $\dx\left(\chi \sgn(\dx N)\mathbbm{1}_{N\leq N_\thresh}\r\right)$ in Equation (\ref{diffusivemodel:a}). Therefore, we will work in the framework, where $N$ is increasing and $N(t,\cdot)=N_\thresh$ admits a unique solution $\bar{x}(t)$. In this framework, System (\ref{diffusivemodel:main}) is equivalent to the following simpler System:
\begin{subequations}\label{ApproxSys:main}
\begin{align}\label{ApproxSys:a}
&\dt \r - \dxx \r + \dx ( \chi  \mathbbm{1}_{x\leq \bar{x}(t)}\r) =  \mathbbm{1}_{x>\bar{x}(t)} \r  \\ 
&\dt N - D\dxx N =-   \r N \label{ApproxSys:b}\\ 
&N(t,\bar{x}(t))=N_\thresh. \label{ApproxSys:c}
\end{align}
\end{subequations}
In order to prove existence and uniqueness of the solution $\bar{x}(t)$, it suffices to require that $\dx N >0$. Nevertheless, the property that $\dx N(t, \cdot)>0$ is in general not implied by the sole condition that $\dx N^0 >0$. 
In fact, it is possible to exhibit an initial configuration $(\r^0,N^0)$ where $N^0$ is monotonic, but nearly constant, and $\r^0$ is sufficiently localized, so that the concentration $N(t,x)$ is no longer monotonic after some time $t>0$, simply because of strong depletion around a spatial location. 
To circumvent this issue, we will here simply study System (\ref{ApproxSys:main}) and at the end of Section, we give a simple criterion, far from being optimal, on the inital data $(\r^0,N^0)$ such that the property $\dx N>0$ is conserved for small time. Hence the solution of System (\ref{ApproxSys:main}), $(\r,N)$ is in fact a solution of (\ref{diffusivemodel:main}). 

The strategy of proof consists in applying a fixed point mapping to the curve $t\mapsto \bar{x}(t)$, \textit{i.e.} the unique solution to $N(t,\cdot)=N_\thresh$. More precisely, the main steps consist in (i) given the curve $\bar{x}(\cdot)$, solving Equations (\ref{ApproxSys:a},\ref{ApproxSys:b}). (ii) Given the solution $(\r[\bar{x}],N[\bar{x}])$, we show existence and uniqueness of a solution to Equation $N[\bar{x}](t,\cdot)=N_\thresh$, that we denote $\bar{y}(\cdot)$. We then show that the solution $\bar{y}(\cdot)$ satisfies the following ODE:
\begin{align}
\label{ODEy}
\left\{
\begin{array}{ll}
\dot{\bar{y}}(t)=-\frac{\dt N[\bar{x}](t,\bar{y}(t))}{\dx N[\bar{x}](t,\bar{y}(t))}\\
\bar{y}(0)=\bar{x}(0) 
\end{array} 
\right..
\end{align}
In fact, ODE (\ref{ODEy}) is equivalent to Equation $N(t,\cdot)=N_\thresh$. To show well-posedness of ODE (\ref{ODEy}), we need to obtain enough regularity on $N$: $\dt N[\bar{x}]$ and $\left(\dx N[\bar{x}]\right)^{-1}$ should be locally Lipschitz in space. But from standard parabolic theory and the fact that the time derivative is expected to have the same regularity as the double space derivative, this roughly corresponds to $\dxx N[\bar{x}]$ being locally Lipschitz in space (and $\dx N[\bar{x}](t,\bar{y}(t))$ uniformly bounded away from $0$). Hence the required regularity of $N$ is $W^{3,\infty}$ in space. We shall see that this regularity holds true, but it is an endpoint case. (iii) Finally, the aim is to show that the mapping $\bar{x}(\cdot)\mapsto \bar{y}(\cdot)$ is a contraction and there exists a unique solution to the Cauchy problem:
\begin{align*}
\left\{
\begin{array}{ll}
\dot{\bar{x}}(t)=-\frac{\dt N[\bar{x}](t,\bar{x}(t))}{\dx N[\bar{x}](t,\bar{x}(t))}\\
\bar{x}(\cdot)|_{t=0}=\bar{x}(0) 
\end{array} 
\right..
\end{align*}
However, because of the regularity requirement on $N$, the mapping $\bar{x}\mapsto \bar{y}$ becomes only a contraction, as will be seen, in a space that controls also the time derivative $\dot{\bar{x}}$. This deviates from standard Picard-Lindelöf theory of integration for ODEs, where contraction in $L^\infty$-norm is sufficient and is obtained through a local in time integration of the ODE. Furthermore, from the ODE (\ref{ODEy}) it becomes clear that $\bar{x}\mapsto \bar{y}$ is at best merely Lipschitz continuous in the $W^{1,\infty}$-norm and not a contraction. In order to circumvent this issue, we will consider the mapping $\bar{x}\mapsto\bar{y}$ in an $W^{1,p}$-norm (with $p<\infty$) and therefore by a local in time integration of the ODE (\ref{ODEy}), the mapping ${\bar{x}}(\cdot)\mapsto {\bar{y}}(\cdot)$ becomes a contraction in that given norm.

Next, we introduce some notations and basic facts, before moving on to the statement of Theorem \ref{ThmExistence}.\\
The evolution operator of the heat equation $e^{t\mu\dxx}$ (here $\mu=1$ and $\mu=D$ will be of interest) on the real-line are defined as follows:
\begin{align*}
 e^{t\mu\dxx}f (x) = \frac{1}{\sqrt{4 \pi \mu t}} \int_{\R} e^{-\frac{(x-y)^2}{4\mu t}}  f(y)dy.
\end{align*}
The operator $e^{t\mu \dxx}$ satisfies the following well-known functional inequalities, as a consequence of Young's convolutional inequality. For $1\leq p \leq q \leq \infty$:
\begin{align}
 \norm{e^{t\mu \dxx } f}_{q} &\leq Ct^{-\frac{1}{2}\left(\frac{1}{p}-\frac{1}{q}\right) }\norm{f}_p, \label{Young} \\
 \norm{ e^{t\mu \dxx } \dx f}_{q}& \leq Ct^{-\frac{1}{2}\left(\frac{1}{p}-\frac{1}{q}+1\right) }\norm{f}_p. \label{dxYoung}
\end{align}
In addition, we consider Hölder spaces $C^{k,\alpha}(\R)$, with $\alpha \in (0,1)$ and the norm $\norm{\cdot}_{C^{k,\alpha}(\R)} = \sum_{i=0}^k\norm{\dx^i\cdot}_\infty+[\dx^k \cdot]_\alpha$, where $[f]_\alpha:=\sup_{x,y}\frac{|f(x)-f(y)|}{|x-y|^\alpha}$. From Real Interpolation Theory with the K-method (see Chapter 1 in \cite{lunardi2009}), we know that $C^{k,\alpha}(\R)=\left(C^{k}(\R), C^{k+1}(\R)\right)_{\alpha,\infty}$, where $C^k(\R)$ is the space of functions $k$-times differentiable with bounded derivatives, equipped with its usual $W^{k,\infty}$-norm. This leads to the following bounds:
\begin{align}
\label{HolderHeat}
\norm{e^{t\mu \dxx} f}_{C^{0,\alpha}(\R)} &\leq Ct^{-\frac{\alpha}{2}}\norm{f}_\infty,\\
\label{dxHolderHeat}
\norm{e^{t\mu \dxx} f}_{C^{1,\alpha}(\R)}& \leq Ct^{-\frac{1+\alpha}{2}}\norm{f}_\infty,\\
\label{dxReverseHolderHeat}
\norm{\dx e^{t\mu \dxx} f}_\infty &\leq Ct^{\frac{\alpha-1}{2}}\norm{ f}_{C^{0,\alpha}(\R)},\\
\label{dxxReverseHolderHeat}
\norm{\dxx e^{t \dxx} f}_\infty &\leq Ct^{\frac{\alpha}{2}-1}\norm{ f}_{C^{0,\alpha}(\R)}.
\end{align}
Set $B(A):=B_{W^{1,p}([0,T])}(A)=\left\{ \bar{y}(\cdot)\in  W^{1,p}([0,T])\middle | \norm{y}_{W^{1,p}}\leq A\right\}$, with the norm $\norm{y}_{W^{1,p}}=\norm{y}_p+\norm{\dot y}_p$ and $p\in(4,\infty)$. Given a curve $\bar x \in B(A)$, we consider System (\ref{ApproxSys:main}) in the moving frame of reference $(t,z)=(t,x-\bar x (t))$, which yields:
 \begin{subequations}\label{movingmodel:main}
 \begin{align}
&\dt \tilde \r -\dot {\bar x} (t)\dz \tilde \r- \dzz \tilde \r + \dz (  \chi\mathbbm{1}_{z\leq 0  }\tilde \r) =  \mathbbm{1}_{z>0} \tilde \r \label{movingmodel:a} \\ 
&\dt \tilde N-\dot {\bar x} (t)\dz \tilde  N - D\dzz \tilde N =-   \tilde\r \tilde N. \label{movingmodel:b}
\end{align}
\end{subequations}
Of note, throughout this Section $\tilde{\r},\tilde{N}$ denotes the solutions in the moving frame, in order to easily distinguish between for instance $\r$ and $\tilde{\r}$. Without loss of generality, we also suppose that $\bar{x}(0)=0$, so that $\tilde{\r}^0=\r^0$ and $\tilde{N}^0=N^0$. Thereafter, we will however drop the diacritical mark $\tilde{ }$ and systematically designate by $\r$ the solution in the moving frame.

As already observed in the Introduction, continuity of the flux in Equation (\ref{movingmodel:a}) leads by a Rankine-Hugoniot type of argument to a $C^1$-jump relation (\ref{C1discRel}), which in the moving frame can simply be rewritten as:
\begin{align*}
\dz\tilde{\r}(t,0^+)-\dz \tilde{\r}(t,0^-)=-\chi \tilde{\r}(t,0).
\end{align*}
In fact, it is furthermore possible to factorize $\tilde \r$ under the form $vU$, with a function $U$ such that the factorization precisely cancels out the $C^1$-jump relation at $z=0$. For instance, we can choose $U(z) = \left\{ \begin{array}{ll}
 1 & \text{if }z\leq 0\\
 e^{-\chi z} & \text{if } z>0 
 \end{array}\right.$. Notice that $U$ exactly satisfies the $C^1$-jump relation and that we obtain the following Equation on $v$:
\begin{align}
\label{vEqn}
\dt v -\dzz v -\beta(t,z)\dz v -\gamma(t,z)v =0,
\end{align}
where $\beta(t,z):=\dot{\bar x}-\chi \mathbbm{1}_{z\leq 0} -2\chi\mathbbm{1}_{z> 0} $ and $\gamma(t,z):=\chi \left(\left(\chi+\frac{1}{\chi} \right)-\dot{\bar x} \right)\mathbbm{1}_{z> 0} $. Under this circumstance, $v$ will be of higher regularity, \textit{i.e.} $C^{1,\alpha}$. Of note, the particular choice of $U$ is arbitrary in this Section, although as will be seen in Section \ref{Sect-TW}, $U$ corresponds to the traveling wave profile in the case $\chi\geq 1$. In fact, one could consider other candidates for $U$, but for the sake of simplicity we restrict ourselves to this particular choice. 

Let us now move to the statement of a well-posedness theorem locally in time under the condition that $N^0$ is increasing.

\begin{thm}\label{ThmExistence}
Let $p \in (4,\infty)$, $\alpha \in \left(0,1-\frac{2}{p}\right)$ and $\alpha' \in \left(\frac{2}{p},1-\frac{2}{p}\right)$. Suppose that $N^0\in W^{3,\infty}(\R)$ and $\frac{\r^0}{U}\in C^{1,\alpha}(\R)$. Additionally suppose that $\dx N^0>0$ and that $N^0(\cdot)=N_\thresh$ admits a (unique) solution, for $x=0$. For a certain $\zeta>0$, denote $\underline{m}$ a lower bound of $\dx N^0$ on the interval $[ -\zeta,+\zeta]$.

Given $A>0$ big enough (depending on $D, \chi, p, \alpha, \alpha', \norm{\frac{\r^0}{U}}_{C^{1,\alpha}},\norm{N^0}_{W^{3,\infty}}, \underline{m}, \zeta$) there exists a small enough $T>0$, such that for any curve $\bar{x}\in B(A)$, there exists a unique solution $(\tilde{\r},\tilde{N})$ to System (\ref{movingmodel:main}). Moreover, $\tilde{\r}\in L^\infty([0,T],W^{1,\infty}(\R))$, $v=\frac{\tilde{\r}}{U}\in L^\infty([0,T],C^{1,\alpha}(\R))$ and $\tilde{N}\in L^\infty([0,T], W^{3,\infty}(\R))$.

Furthermore, there exists a unique curve $\bar{x}\in B(A)$, such that the solution $(\tilde{\r},\tilde{N})$ to System (\ref{movingmodel:main}) satisfies in addition the condition $\tilde{N}(t,0)=N_\thresh$, or in the static frame ${N}(t,\bar{x}(t))=N_\thresh$, for $t\in [0,T]$. In other terms, $(\r,N)$ in the static frame is the unique solution to System (\ref{ApproxSys:main}).
\end{thm}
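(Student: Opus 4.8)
The plan is to prove the two assertions in turn: first, for a \emph{fixed} curve $\bar x\in B(A)$, well-posedness and regularity of System (\ref{movingmodel:main}); then a Banach fixed point argument for the map $\bar x\mapsto\bar y$ on $B(A)$.

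\textbf{Step 1 (the decoupled linear problems at fixed $\bar x$).} Once $\bar x$ is prescribed, the discontinuity in (\ref{movingmodel:a}) has been frozen to $\mathbbm{1}_{z\le 0}$ and this equation no longer involves $\tilde N$; so I would solve first the linear equation (\ref{vEqn}) for $v$ (hence $\tilde\r=vU$), and only then (\ref{movingmodel:b}) for $\tilde N$, which is linear in $\tilde N$ since $\tilde\r$ is by then known. For $v$ I would use the Duhamel representation
\begin{equation*}
v(t)=e^{t\dzz}v^0+\int_0^t e^{(t-s)\dzz}\bigl(\beta(s)\dz v(s)+\gamma(s)v(s)\bigr)\,ds
\end{equation*}
and a Neumann iteration in $L^\infty([0,T],C^{1,\alpha}(\R))$. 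Since $\norm{\beta(s)}_\infty+\norm{\gamma(s)}_\infty\lesssim 1+|\dot{\bar x}(s)|$ with $\dot{\bar x}\in L^p$, and since $\beta\dz v+\gamma v\in L^\infty$ (with a jump at $z=0$), the smoothing bound (\ref{dxHolderHeat}) reduces the contraction estimate to controlling $\int_0^t (t-s)^{-\frac{1+\alpha}{2}}(1+|\dot{\bar x}(s)|)\,ds$, which is finite and $o(1)$ as $T\to 0$ by Hölder's inequality in $s$ precisely because $\frac{(1+\alpha)p'}{2}<1$, i.e.\ $\alpha<1-\frac 2p$. This yields $v\in L^\infty([0,T],C^{1,\alpha})$, hence $\tilde\r=vU\in L^\infty([0,T],W^{1,\infty})$ carrying exactly the $C^1$-jump $\dz\tilde\r(t,0^+)-\dz\tilde\r(t,0^-)=-\chi\tilde\r(t,0)$ inherited from $U$; uniqueness is immediate by linearity.

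\textbf{Step 2 (the endpoint $W^{3,\infty}$ estimate for $\tilde N$ --- the crux).} Feeding $\tilde\r$ into (\ref{movingmodel:b}) and solving the resulting linear equation for $\tilde N$ by Duhamel against $e^{tD\dzz}$ (with $\dot{\bar x}\dz\tilde N$ and $-\tilde\r\tilde N$ as forcing) is routine for the bounds up to $C^{2,\alpha'}$; the delicate point is the top-order estimate $\tilde N\in L^\infty([0,T],W^{3,\infty})$. Since $\dz\tilde\r$ jumps at $z=0$, so does $\dz(\tilde\r\tilde N)$, hence $\tilde\r\tilde N$ is \emph{not} globally $C^{1,\alpha'}$ and textbook Schauder (which would give $C^{3,\alpha'}$) does not apply. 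I would split $\tilde\r=\tilde\r(t,0)\,U+r$ with $r:=\bigl(v-v(t,0)\bigr)U\in C^{1,\alpha}(\R)$ --- the factorization by $U$ is exactly what cancels the kink --- so that $\tilde\r\tilde N=\tilde\r(t,0)\,U\tilde N+r\tilde N$, where $r\tilde N\in C^{1,\alpha}$ contributes a fully regular ($C^{2,\alpha'}$ and $W^{3,\infty}$) piece via (\ref{dxxReverseHolderHeat}). For the singular piece $\tilde\r(t,0)\,U\tilde N$ one isolates a Heaviside-type term $q(t)\,z^+$ (times a cutoff), with $q(t)=-\chi\tilde\r(t,0)\tilde N(t,0)$ the jump of $\dz(\tilde\r\tilde N)$ at $z=0$, and uses the identity $\dzz e^{(t-s)D\dzz}(\cdot)=-D^{-1}\partial_s\bigl(e^{(t-s)D\dzz}(\cdot)\bigr)$ to integrate by parts in time inside $\int_0^t\dzz e^{(t-s)D\dzz}\bigl(q(s)\mathbbm{1}_{z>0}\bigr)\,ds$: this trades the non-integrable $(t-s)^{-1}$ singularity of $\dzz e^{(t-s)D\dzz}\mathbbm{1}_{z>0}$ for the harmless boundary term $q(t)\mathbbm{1}_{z>0}$ plus $\int_0^t\dot q(s)\,e^{(t-s)D\dzz}\mathbbm{1}_{z>0}\,ds$, which is controlled in $L^1([0,T])$ using the regularity already available for the traces $\tilde\r(\cdot,0),\tilde N(\cdot,0)$ (in particular one-sided traces of $\dzz v$ and $\dzz\tilde N$, obtained by parabolic regularity in the half-lines $\{z>0\},\{z<0\}$ where the coefficients are regular in $z$). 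The conditions $\alpha'\in(\frac 2p,1-\frac 2p)$ and $p>4$ are exactly what keep the remaining time integrals finite while leaving a positive power of $T$; uniqueness of $\tilde N$ is again immediate by linearity.

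\textbf{Step 3 (the fixed point on the curve).} Given $\bar x\in B(A)$ and the associated static-frame pair $(\r[\bar x],N[\bar x])$, Steps 1--2 and Duhamel give that $N[\bar x]$ is close to $N^0$ in $C^1(\R)$, uniformly in $t\le T$ and $\bar x\in B(A)$, for $T$ small; since $N^0$ stays strictly on one side of $N_\thresh$ outside $[-\zeta,\zeta]$ and satisfies $\dx N^0\ge\underline m$ on $[-\zeta,\zeta]$, it follows that for $T$ small $N[\bar x](t,\cdot)=N_\thresh$ has a unique root $\bar y(t)\in(-\zeta,\zeta)$, with $|\bar y(t)|$ small and $\dx N[\bar x](t,\bar y(t))\ge\underline m/2$. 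Implicit differentiation of $N[\bar x](t,\bar y(t))=N_\thresh$ (licit since $N[\bar x]$ is $C^1$ in $(t,x)$ near $(t,\bar y(t))$ and $\dx N[\bar x]>0$ there) yields the ODE (\ref{ODEy}). I would then show that $T_{\mathrm{map}}:\bar x\mapsto\bar y$ maps $B(A)$ into itself and contracts for $T$ small. Self-mapping holds because $\norm{\bar y}_\infty$ and $\norm{\dot{\bar y}}_\infty$ are bounded by a constant depending only on the data and $A$ (through $\dt N[\bar x]=D\dxx N[\bar x]-\r[\bar x]N[\bar x]$, the $W^{3,\infty}$-bound on $N[\bar x]$, and the lower bound on $\dx N[\bar x]$), so $\norm{\bar y}_{W^{1,p}}\le T^{1/p}C(\text{data},A)\le A$ once $T$ is small. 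For the contraction, write $\dot{\bar y}_i=G_i(t,\bar y_i(t))$ with $G_i=-(D\dxx N[\bar x_i]-\r[\bar x_i]N[\bar x_i])/\dx N[\bar x_i]$ and split $G_1(t,\bar y_1)-G_2(t,\bar y_2)=\bigl[G_1(t,\bar y_1)-G_1(t,\bar y_2)\bigr]+\bigl[G_1-G_2\bigr](t,\bar y_2)$: the first bracket is $\le\mathrm{Lip}(G_1(t,\cdot))\,|\bar y_1(t)-\bar y_2(t)|$ with $\mathrm{Lip}(G_1)$ bounded thanks again to the $W^{3,\infty}$ regularity of Step 2 (here $\partial_x^3 N[\bar x_1]\in L^\infty$ enters $\dx G_1$), and it is absorbed on the left after using $\norm{\bar y_1-\bar y_2}_{L^\infty}\le T^{1-1/p}\norm{\dot{\bar y}_1-\dot{\bar y}_2}_{L^p}$ (the two curves start at $0$); the second bracket is controlled by $\norm{\r[\bar x_1]-\r[\bar x_2]}_\infty+\norm{N[\bar x_1]-N[\bar x_2]}_{W^{2,\infty}_{\mathrm{loc}}}$. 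This last, Lipschitz dependence of the PDE pair on $\bar x$ with a gain in a power of $T$, is obtained by redoing the Duhamel estimates of Steps 1--2 for the differences; the new ingredient is that the additional forcing terms are supported on the interval between $\bar x_1(t)$ and $\bar x_2(t)$ (of length $\le|\bar x_1(t)-\bar x_2(t)|$) and contain Dirac differences $\delta_{\bar x_1(t)}-\delta_{\bar x_2(t)}$, handled with the $L^1_x\to L^\infty_x$ smoothing inequalities (\ref{Young})--(\ref{dxYoung}) and, once more, the time-integration-by-parts identity. Banach's fixed point theorem in the closed ball $B(A)\subset W^{1,p}([0,T])$ then gives a unique $\bar x^\ast$; at $\bar x^\ast$ one has $N(t,\bar x^\ast(t))=N_\thresh$ with $N$ monotone near that point, so $\mathbbm{1}_{x\le\bar x^\ast(t)}=\mathbbm{1}_{N\le N_\thresh}$ there and $(\r,N)$ solves (\ref{ApproxSys:main}); uniqueness is inherited from the two fixed points.

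\textbf{The main obstacle} is Step 2: promoting $C^{2,\alpha'}$ regularity of $\tilde N$ to $W^{3,\infty}$ across the moving interface. The forcing only just fails to be $C^{1,\alpha'}$, so the estimate sits at the endpoint of Schauder theory; the decomposition isolating the Heaviside singularity, together with the time-integration-by-parts based on $\dzz e^{(t-s)D\dzz}=-D^{-1}\partial_s e^{(t-s)D\dzz}$, is the working device, and it is this step that dictates the hypotheses $p>4$ and $\alpha'\in(\frac 2p,1-\frac 2p)$. A secondary but real technical burden is the contraction in Step 3, where the two comparison solutions live in different moving frames so that differences of indicators generate Dirac masses; working in $W^{1,p}$ rather than $W^{1,\infty}$ --- in which $\bar x\mapsto\bar y$ is only Lipschitz, never a contraction --- is what turns local-in-time integration of (\ref{ODEy}) into a genuine contraction.
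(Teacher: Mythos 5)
Your overall architecture matches the paper's: decouple (\ref{movingmodel:a}) from (\ref{movingmodel:b}) once $\bar x$ is frozen, solve both by Duhamel/Banach fixed point (your Step 1), upgrade $\tilde N$ to $W^{3,\infty}$ (your Step 2), derive the ODE (\ref{ODEy}) and close with a fixed point for $\bar x\mapsto\bar y$ in $W^{1,p}$ (your Step 3). Steps 1 and 3 are essentially the paper's Steps 1a, 1b, 3, 4. One small flag on Step 3: the paper's contraction estimate for the curve never produces Dirac masses of the form $\delta_{\bar x_1(t)}-\delta_{\bar x_2(t)}$, because it always compares $v_1,v_2$ and $\tilde N_1,\tilde N_2$ each \emph{in its own moving frame}, where the coefficient differences are just $\dot{\bar x}_1-\dot{\bar x}_2$ and $(\dot{\bar x}_1-\dot{\bar x}_2)\chi\mathbbm{1}_{z\geq0}$; the passage back to the static frame pays only the Lipschitz price $\norm{\partial_{zzz}\tilde N_1}_\infty\,|\bar x_1(t)-\bar x_2(t)|$ — which is precisely why $W^{3,\infty}$ is needed. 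Your mention of Dirac differences suggests a static-frame comparison, which is strictly harder than what is actually required.

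The genuine divergence, and where you have a gap, is Step 2. You and the paper isolate the same singular contribution $\int_0^t\dzz e^{(t-s)D\dzz}\,q(s)\mathbbm{1}_{z>0}\,ds$ with $q(s)=-\chi\, v(s,0)\tilde N(s,0)$, but you propose to control it by integrating by parts in time via $\dzz e^{(t-s)D\dzz}=-D^{-1}\partial_s e^{(t-s)D\dzz}$, which trades the $(t-s)^{-1}$ singularity for the term $\int_0^t e^{(t-s)D\dzz}\dot q(s)\mathbbm{1}_{z>0}\,ds$. This requires $q\in W^{1,1}([0,T])$, i.e.\ $\partial_t v(\cdot,0)\in L^1([0,T])$. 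But Step 1 only delivers $v\in L^\infty([0,T],C^{1,\alpha}(\R))$; it gives no time regularity, and in particular no control of the one-sided traces $\dzz v(s,0^\pm)$ that appear in $\partial_t v(s,0)$ through Equation (\ref{vEqn}). If you try to estimate $\dzz v(s,0^\pm)$ from the Duhamel formula, you run into exactly the same Heaviside-type singularity you started from — the trick you want to apply to $\tilde N$ would have to be applied first to $v$, so the argument as written is circular. The paper's Step 2 avoids this entirely by not taking any time derivative: it writes the heat kernel explicitly, so that
\begin{align*}
\int_0^t\dzz e^{(t-s)D\dzz}h(s)\mathbbm{1}_{z\geq0}\,ds
= -\int_0^t\frac{z\,h(s)}{4\pi^{1/2}D^{3/2}(t-s)^{3/2}}\,e^{-\frac{z^2}{4D(t-s)}}\,ds,
\end{align*}
and the change of variables $u=|z|/\sqrt{t-s}$ turns the apparent $(t-s)^{-3/2}$ singularity into a convergent Gaussian integral, giving a bound by $C\norm{h}_\infty$ with only $h\in L^\infty$ in hand. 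Your integration-by-parts route would become viable if you first established an $L^1$-in-time bound on the boundary trace of $\dzz v$, but that is itself an extra endpoint estimate of the same difficulty, not a byproduct of Step 1, and you should either prove it or switch to the explicit kernel computation.
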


\begin{proof}
We divide the proof of Theorem \ref{ThmExistence} into several steps:
\begin{enumerate}
 \item a) We fix a curve $\bar{x}(\cdot)\in W^{1,p}([0,T])$ and construct the unique (mild) solution $v\in L^\infty\left([0,T],C^{1,\alpha}(\R)\right))$ to Equation (\ref{vEqn}). Furthermore the map $\bar x \in B(A) \mapsto v \in L^\infty([0,T],C^{1,\alpha}(\R))$ is Lipschitz continuous. \\
 b) Given $v$ and thus $\tilde{\r}$, we construct the unique (mild) solution $\tilde N \in L^\infty\left([0,T],C^{2,\alpha'}(\R)\right)$ to Equation (\ref{movingmodel:b}). Furthermore the map $\bar x \in B(A) \mapsto \tilde N \in L^\infty([0,T],C^{2,\alpha'}(\R))$ is Lipschitz continuous.
 \item We show that $\tilde N\in L^\infty([0,T], W^{3,\infty}(\R))$. This regularity is an improvement from the more standard regularity result obtained in Step 1b) and is crucial for the rest of the proof. We carry out estimates on explicit computations and finally we refer to the Remark at the end of the proof of Theorem \ref{ThmExistence} for a brief argument to show that the obtained regularity is borderline.
 \item For $t\in [0,T], N(t)$ (in the static frame) admits a unique solution to the equation $N(t,\cdot)=N_\thresh$ that we denote by $\bar{y}(t)$. Furthermore $\bar{y} (\cdot)$ satisfies ODE (\ref{ODEy}) and the regularity obtained on $N$ leads to well-posedness of ODE (\ref{ODEy}).
 \item For $A>0$ big enough the map $\bar{x}\mapsto \bar{y}$ maps from $B(A)$ into itself and is furthermore a contraction. We conclude by Banach's Fixed Point Theorem and obtain well-posedness. \\
\end{enumerate}

\textit{Conventions.} For the sake of clarity, throughout the proof we will make use of the following conventions. $C$ will represent constants that depend on $D,\chi,p,\alpha, \alpha',\underline{m},\zeta$. In order to simplify the presentations of the inequalities, we suppose that $A,\norm{\frac{\r^0}{U}}_{C^{1,\alpha}},\norm{N^0}_{W^{3,\infty}},\norm{N^0}_{C^{2,\alpha'}} >1$ in order to use freely for instance the bounds $1+A\leq 2A$ or $\norm{\frac{\r^0}{U}}_{C^{1,\alpha}}\leq \norm{\frac{\r^0}{U}}_{C^{1,\alpha}}^2$. In parallel, we suppose that $T<1$, in order to use freely bounds of the type $|t-s|^{-c} \leq |t-s|^{-d}$, for $t,s\in[0,T]$, when $0<c<d$. \\

\textit{Step 1a:} Existence and uniqueness of a (mild) solution $v$ to Equation (\ref{vEqn}).

Consider the affine map $F:L^\infty\left([0,T],C^{1,\alpha}(\R)\right) \to L^\infty\left([0,T],C^{1,\alpha}(\R)\right)$, for $t\in [0,T]$:
\begin{align}
\label{vMap}
F[u](t) = e^{t \dzz } \left(\frac{\r^0}{U}\right)+\int_0^t e^{(t-s) \dzz }\left( \beta(s)\dz u(s)+ \gamma(s)u(s) \right)ds.
\end{align}
As $\frac{\r^0}{U} \in C^{1,\alpha}(\R)$, we have that $e^{t \dzz } \left(\frac{\r^0}{U}\right) \in L^\infty\left([0,T],C^{1,\alpha}(\R)\right)$.\\
We will now show that the second term in (\ref{vMap}) is in $L^\infty\left([0,T],C^{1,\alpha}(\R)\right)$. Notice that $\beta(s,z)\dz u(s,z) = \dz\left( \beta(s,z) u(s,z) \right) + \chi u(s,0)\delta_0  $. Hence by using Bounds (\ref{Young},\ref{dxYoung}), the fact that $\norm{e^{(t-s)\dzz} \delta_0}_{\infty}\leq \frac{C}{\sqrt{t-s}}$ and that $|\beta|\leq C(1+|\dot{\bar{x}}(s)|)$, we have that:
\begin{align*}
\norm{\int_0^t e^{(t-s) \dzz }\left( \beta(s)\dz u(s) \right)ds}_{\infty} &\leq C\int_0^t \left( \frac{\norm{\beta(s)u(s)}_{\infty}}{\sqrt{t-s}}+\frac{|u(s,0)|}{\sqrt{t-s}}\right) ds \\
&\leq C\int_0^t  \frac{(1+|\dot{\bar{x}}(s)|)\norm{u(s)}_{\infty}}{\sqrt{t-s}} ds \\
&\leq C\norm{u}_{\infty}\int_0^t  \frac{(1+|\dot{\bar{x}}(s)|)}{\sqrt{t-s}} ds \\
&\leq C\norm{u}_{\infty} \left(1+\norm{\dot{\bar{x}}}_p\right) T^{\frac{p-2}{2p}}\\
&\leq C\norm{u}_{\infty} AT^{\frac{p-2}{2p}},
\end{align*}
where we have used Hölder's inequality, $p>2$ in order to guarantee the integrability of $s\mapsto (t-s) ^{-\frac{p}{2(p-1)}}$ and the convention that $T<1$ and $A>1$. In a slightly easier manner, we have also that:
\begin{align*}
\norm{\dz \int_0^t e^{(t-s) \dzz }\left( \beta(s)\dz u(s) \right)ds}_{\infty} \leq  C\norm{\dz u}_{\infty} A T^{\frac{p-2}{2p}}.
\end{align*}
Finally by using Bound (\ref{dxHolderHeat}):
\begin{align*}
\left[\dz \int_0^t e^{(t-s) \dzz }\left( \beta(s)\dz u(s) \right)ds\right]_{\alpha} \leq  C \norm{\dz u}_{\infty} A T^{\frac{p(1-\alpha)-2}{2p}} ,
\end{align*}
where integrability of $\norm{s\mapsto (t-s)^{-\frac{\alpha+1}{2}}}_{\frac{p}{p-1}}$ is guaranteed by the condition $\alpha<1-\frac{2}{p}$. This yields the bound:
\begin{align*}
\norm{\int_0^t e^{(t-s) \dzz }\left( \beta(s)\dz u(s) \right)ds}_{L^\infty([0,T],C^{1,\alpha}(\R))}\leq
C \norm{ u}_{L^\infty([0,T] ,W^{1,\infty}(\R))} A T^{\frac{p(1-\alpha)-2}{2p}},  
\end{align*}
where by the conventions, we have used that $T^{\frac{p-2}{2p}}<T^{\frac{p(1-\alpha)-2}{2p}}$. The remaining term in (\ref{vMap}) admits a similar bound. In fact:
\begin{align*}
\norm{ \int_0^t e^{(t-s) \dzz }\left( \gamma(s) u(s) \right)ds}_{L^\infty([0,T],C^{1,\alpha}(\R))} \leq  C \norm{u}_{L^\infty([0,T]\times\R)} A T^{\frac{p(1-\alpha)-2}{2p}}. 
\end{align*}
Hence $F$ maps $L^\infty([0,T],C^{1,\alpha}(\R))$ into itself. 

Furthermore, if we choose $T$ small enough (depending on $A$), for instance such that the Lipschitz constant of $F$ becomes $\frac{1}{2}$, then $F$ is a contraction and by Banach's Fixed Point Theorem Equation (\ref{movingmodel:a}) admits a unique solution $v$. Furthermore, we have the bound:
\begin{align}
\norm{v}=\norm{F[v]}\leq \norm{F[v]-F[0]} + \norm{F[0]} \leq \frac{\norm{v}}{2}+\norm{F[0]} \nonumber\\
\label{vbd}
\implies \norm{v}_{L^\infty([0,T],C^{1,\alpha}(\R))} \leq 2 \norm{\frac{\r^0}{U}}_{C^{1,\alpha}(\R)}.
\end{align}

It remains to show that the map $\bar x \mapsto v$ is Lipschitz continuous. Given $\bar x_1,\bar x_2 \in B(A)$, consider the two corresponding functions $v_1,v_2\in L^\infty([0,T],C^{1,\alpha}(\R))$, as well as the two corresponding maps $F_1,F_2$, and set $w:=v_1-v_2$:
\begin{align*}
w(t)&=\int_0^t e^{(t-s) \dzz }\left( \beta_1\dz w+(\beta_1-\beta_2)\dz v_2+\gamma_1 w +(\gamma_1-\gamma_2)v_2\right)ds\\
&=\left(F_1[w]-e^{t\dzz}\left(\frac{\r^0}{U}\right) \right)+\int_0^t e^{(t-s) \dzz }\left( (\beta_1-\beta_2)\dz v_2+(\gamma_1-\gamma_2)v_2\right)ds.
\end{align*}
First of all, by using that $e^{t\dzz}\left(\frac{\r^0}{U}\right)=F_1[0]$ and by recalling that by the choice of $T$ the Lipschitz constant of $F_1$ is $\frac{1}{2}$, we have:
\begin{align*}
\norm{F_1[w]-e^{t\dzz}\left(\frac{\r^0}{U}\right)}_{L^\infty([0,T],C^{1,\alpha}(\R))}& = \norm{F_1[w]-F_1[0]}_{L^\infty([0,T],C^{1,\alpha}(\R))}
\\
&\leq \frac{1}{2}\norm{ w}_{L^\infty([0,T] ,C^{1,\alpha}(\R))}.
\end{align*}
We have that $\beta_1-\beta_2=\dot{\bar{x}}_1-\dot{\bar{x}}_2$ and $\gamma_1-\gamma_2=(\dot{\bar{x}}_1-\dot{\bar{x}}_2)\chi \mathbbm{1}_{z\geq0}$, which leads for $t\in [0,T]$ to:
\begin{align*}
&\norm{\int_0^t e^{(t-s) \dzz }\left( (\beta_1-\beta_2)\dz v_2+(\gamma_1-\gamma_2)v_2\right)ds }_{C^{1,\alpha}(\R)}\\
= &\norm{\int_0^t e^{(t-s) \dzz }(\dot{\bar{x}}_1(s)-\dot{\bar{x}}_2(s))(\dz v_2+\chi\mathbbm{1}_{z\geq0} v_2)ds }_{C^{1,\alpha}(\R)}\\
\leq & C \int_0^t \frac{|\dot{\bar{x_2}}(s)-\dot{\bar{x_1}}(s)|\norm{\dz v_2(s) + \chi\mathbbm{1}_{z\geq0}v_2(s) }_\infty}{(t-s)^{\frac{\alpha+1}{2}}} ds\\
\leq & C \norm{v_2}_{L^\infty([0,T],C^{1,\alpha}(\R))}\int_0^t \frac{|\dot{\bar{x_2}}(s)-\dot{\bar{x_1}}(s)|}{(t-s)^{\frac{\alpha+1}{2}}} ds \\
\leq & C \norm{\frac{\r^0}{U}}_{C^{1,\alpha}(\R)}\norm{\dot{\bar{x_2}}-\dot{\bar{x_1}}}_p T^{\frac{p(1-\alpha)-2}{2p}},
\end{align*}
where the last bound is an application of Hölder's inequality. Hence we have that:
\begin{align*}
\norm{w}_{L^\infty([0,T],C^{1,\alpha}(\R))}\leq \frac{1}{2}\norm{w}_{L^\infty([0,T],C^{1,\alpha}(\R))}+C\norm{\frac{\r^0}{U}}_{C^{1,\alpha}(\R)}\norm{\dot{\bar{x_2}}-\dot{\bar{x_1}}}_p T^{\frac{p(1-\alpha)-2}{2p}}.
\end{align*}
And finally we establish that the map $\bar{x}\mapsto v$ is Lipschitz continuous with:
\begin{align}
\label{vMap-Lip}
\norm{w}_{L^\infty([0,T],C^{1,\alpha}(\R))}\leq CT^{\frac{p(1-\alpha)-2}{2p}}\norm{\frac{\r^0}{U}}_{C^{1,\alpha}(\R)}\norm{\dot{\bar{x_2}}-\dot{\bar{x_1}}}_p
\end{align}

\textit{ Step 1b:} Existence and uniqueness of a (mild) solution $\tilde N$ to Equation (\ref{movingmodel:b}).

Consider the map $ G: L^\infty([0,T],C^{2,\alpha'}(\R)) \to L^\infty([0,T],C^{2,\alpha'}(\R))$, with $\alpha' \in (\frac{2}{p},1-\frac{2}{p})$. For $t\in [0,T]$:
\begin{align}
\label{NMap}
G[u](t)= e^{t D\dzz } N^0+\int_0^t e^{(t-s)D \dzz }\left( \dot{\bar x}(s) \dz u(s)-  v(s)U u(s) \right)ds.
\end{align}
We proceed as before and treat explicitly only the following two terms:
\begin{align*}
\left[\dzz \int_0^t e^{D(t-s)\dzz}  v(s)Uu(s)ds\right]_{\alpha'}
&\leq C\int_0^t (t-s)^{-\frac{1+\alpha'}{2}}\norm{\dz\left(v(s)Uu(s) \right)}_\infty ds\\
&\leq C\int_0^t(t-s)^{-\frac{1+\alpha'}{2}}\left(\norm{\dz v(s)}_\infty\norm{u(s) }_\infty+\norm{ v(s)}_\infty\norm{\dz u(s) }_\infty\right) ds
\\
&\leq CT^{\frac{1-\alpha'}{2}}\norm{v}_{L^\infty([0,T],W^{1,\infty}(\R))}\norm{u}_{L^\infty([0,T],W^{1,\infty}(\R))}\\
&\leq CT^{\frac{1-\alpha'}{2}}\norm{\frac{\r^0}{U}}_{C^{1,\alpha}(\R)}\norm{u}_{L^\infty([0,T],W^{1,\infty}(\R))},
\end{align*}
where we used the fact that $U\in W^{1,\infty}(\R)$. And:
\begin{align*}
\left[\dzz \int_0^t e^{D(t-s)\dzz}\dot{\bar{x}}(s)\dz u(s)ds\right]_{\alpha'}
&\leq C\int_0^t (t-s)^{-\frac{1+\alpha'}{2}}\left|\dot{\bar{x}}(s)\right|\norm{\dzz u(s) }_\infty ds\\
&\leq CA\norm{u}_{L^\infty([0,T],W^{2,\infty}(\R)}T^{\frac{1-\alpha'}{2}}.
\end{align*}
By using Bound (\ref{vbd}), we can choose $T$ such that the Lipschitz constant for $G$ becomes $\frac{1}{2}$. Therefore $G$ is a contraction and this yields existence and uniqueness of the solution $\tilde N$ to Equation (\ref{movingmodel:b}) and $\tilde{N}$ satisfies the bound:
\begin{align}
\label{Nbd}
\norm{\tilde N}_{L^\infty([0,T],C^{2,\alpha'}(\R))} \leq 2 \norm{N^0}_{C^{2,\alpha'}(\R)}\leq 2 \norm{N^0}_{W^{3,\infty}}.
\end{align}

As before, we show that $\bar x \in B(A) \mapsto N \in L^\infty([0,T],C^{2,\alpha'}(\R))$ is Lipschitz continuous. Given $\bar x_1, \bar x_2 \in B(A)$, consider the corresponding $\tilde N_1, \tilde N_2$, as well as the two corresponding maps $G_1,G_2$. We recall that $w=v_1-v_2$ and set $P:=\tilde N_1 - \tilde N_2$:
\begin{align*}
P&=G_1[P]-G_1[0]+\int_0^t e^{(t-s)D\dzz} \left((\dot{\bar{x_1}}(s) - \dot{\bar{x_2}}(s))\dz \tilde N_2 (s)ds -  w(s)U\tilde N_2(s)\right) ds .
\end{align*}
By the same arguments as before, we have the following bounds:
\begin{align*}
\norm{G_1[P]-G_1[0]}_{L^\infty([0,T],C^{2,\alpha'}(\R))}&\leq
\frac{1}{2}\norm{P}_{L^\infty([0,T],C^{2,\alpha'}(\R))}\\
\norm{\int_0^t e^{(t-s)D\dzz} (\dot{\bar{x_1}}(s) - \dot{\bar{x_2}}(s))\dz \tilde N_1 (s)ds}_{L^\infty([0,T],C^{2,\alpha'}(\R))}&\leq C T^{\frac{p(1-\alpha')-2}{2p}}\norm{N^0}_{C^{2,\alpha'}(\R)}\norm{\dot{\bar{x_2}}-\dot{\bar{x_1}}}_p\\
\norm{\int_0^t e^{(t-s)\dzz}w(s)U\tilde N_1(s) ds}_{L^\infty([0,T],C^{2,\alpha'}(\R))} &\leq C T^{\frac{1-\alpha'}{2}}\norm{w}_{L^\infty([0,T],C^{1,\alpha}(\R))}\norm{N^0}_{C^{2,\alpha'}(\R)}.
\end{align*}
By recalling Inequality (\ref{vMap-Lip}) on $\norm{w}_{L^\infty([0,T],C^{1,\alpha}(\R))}$, we obtain that the map $\bar x \mapsto N$ is Lipschitz continuous, with:
\begin{align}
\label{NMap-Lip}
\norm{P}_{L^\infty([0,T],C^{2,\alpha'}(\R))}\leq C\left( T^{\frac{p(1-\alpha')-2}{2p}}+T^{\frac{(1-\alpha')(p(1-\alpha)-2)}{4p}}\right)\norm{\frac{\r^0}{U}}_{C^{1,\alpha}(\R)}\norm{N^0}_{C^{2,\alpha'}(\R)}\norm{\dot{\bar{x_2}}-\dot{\bar{x_1}}}_p.
\end{align}

\textit{Step 2:} Enhanced regularity on $\tilde N\in L^\infty([0,T], W^{3,\infty}(\R))$.

First let us point out that $\tilde N$ satisfies Equation (\ref{movingmodel:b}):
\begin{align*}
\dt \tilde{N}-\dzz \tilde{N}-\dot{\bar{x}}(t)\dz \tilde{N}=-  vU\tilde{N}.
\end{align*}
Suppose that $vU\tilde{N}$ had $C^{1,\alpha''}$ regularity in space for some $\alpha'' \in (0,1)$. Then by standard Parabolic Schauder Estimates (see Chapter 8 in \cite{krylov1996}), $\tilde{N}$ would have $C^{3,\alpha''}$ regularity in space. But because of the $C^1$-discontinuity of $U$ at $z=0$, this fails and $vU\tilde{N}$ is merely Lipschitz continuous. This constitutes the endpoint case for the Parabolic Schauder Estimates and it cannot generally be deduced that $\tilde{N}$ has $W^{3,\infty}$ regularity in space. However, in our case this result remains true, as we can single out the $C^1$-discontinuity of $U$ at $z=0$, then prove that this explicit contribution enjoys the endpoint $W^{3,\infty}$ regularity. Finally, we refer the reader to Remark \ref{rk-optimality-N} at the end of this proof, where we give an argument why any higher regularity is not to be expected. \\

From the preceding point, we have the following representation for $\tilde{N}$:
\begin{align}
\label{repN}
\tilde N(t)=e^{t D\dzz } N^0+\int_0^t e^{(t-s) D\dzz }\left( \dot{\bar x}(s) \dz \tilde N(s)-  v(s)U\tilde N(s) \right)ds.
\end{align}
The term $e^{t D\dzz }N^0\in L^\infty([0,T], W^{3,\infty}(\R))$, as by assumption $N^0\in W^{3,\infty}(\R)$.\\
In addition, by using Bound (\ref{dxxReverseHolderHeat}):
\begin{align*}
\norm{\partial_{zzz}\int_0^t e^{(t-s) D\dzz } \dot{\bar x}(s) \dz \tilde N(s)ds}_\infty & \leq \int_0^t \norm{ \dzz e^{(t-s) \dzz } \dot{\bar x}(s) \dzz \tilde{N}(s)}_\infty ds\\
& \leq C T^{\frac{p\alpha'-2}{2p}} \norm{\dot{\bar{x}}}_p \norm{\tilde N}_{L^\infty([0,T],C^{2,\alpha'}(\R))},
\end{align*}
where the integrability of $\norm{s\mapsto (t-s)^{\frac{\alpha'}{2}-1}}_{\frac{p}{p-1}}$ is due to the condition $\alpha'>\frac{2}{p}$. \\
It remains to be shown that $\int_0^t e^{(t-s) D\dzz }\left(v(s)U\tilde N(s) \right)ds \in L^\infty([0,T], W^{3,\infty}(\R)) $. In order to do so, we will decompose the term $\dz \left(v(s)U\tilde N (s) \right)$ as the sum of a $C^{0,\alpha}$ function and a discontinous function. In fact for $s\in [0,T], z\in \R$:
\begin{align}
\dz \left(vU \tilde N\right)(s,z)=\underbrace{\dz \left(vU \tilde N\right)(s,z) - \llbracket \dz\left(vU \tilde N\right)(s) \rrbracket_{z=0}\mathbbm{1}_{z\geq0}}_{g(s,z):=} +\underbrace{\llbracket \dz\left(vU \tilde N\right)(s) \rrbracket_{z=0}}_{h(s):=}\mathbbm{1}_{z\geq0},
\label{C1DiscDecomp}
\end{align}
where $\llbracket f \rrbracket_{z_0} = \lim_{z\to z_0^+}f(z)-\lim_{z\to z_0^-}f(z)$. Here $\llbracket \dz\left(vU \tilde N\right)(s) \rrbracket_{z=0}$ is well-defined, since $v(s),\tilde N(s)\in C^{1,\alpha}(\R)$ and $U\in C^1(\R_+)\cap C^1(\R_-)$. We will conclude by treating both terms separately and using the following bound:
\begin{align}
\label{C1DiscDecompBd}
\norm{\partial_{zzz}\int_0^t e^{(t-s) \dzz }\left(v(s)U\tilde N(s) \right)ds}_\infty&\leq \norm{\int_0^t \dzz e^{(t-s) \dzz }g(s)ds}_\infty +\norm{\int_0^t \dzz e^{(t-s) \dzz }h(s)\mathbbm{1}_{z\geq0}ds}_\infty.
\end{align}
\begin{lemma}
\label{HolderExtension}
Let $f\in L^\infty([0,T],C^{0,\alpha}(\R_+))\cap L^\infty([0,T],C^{0,\alpha}(\R_-))$, where we understand $C^{0,\alpha}(\R_\pm)$ as a normed space, equipped with the norm $\norm{\cdot}_\infty +[\cdot]_{\alpha;\R_\pm}$.

Then we have that $g:=f-\llbracket f\rrbracket_{z=0}\mathbbm{1}_{z\geq 0} \in L^\infty([0,T],C^{0,\alpha}(\R))$ and:
\begin{align*}
\norm{g}_{L^\infty([0,T],C^{0,\alpha}(\R))}\leq 2^{1-\alpha} \max\left\{\sup_{t\in [0,T]}[f(t)]_{\alpha;\R_+},\sup_{t\in [0,T]}[f(t)]_{\alpha;\R_-}\right\}+3\norm{f}_{\infty}.
\end{align*}
\end{lemma}

\begin{proof}
By construction we have that for $t\in[0,T]$, $g(t)\in C^0(\R)$ and $\norm{g}_{\infty}\leq 3 \norm{f}_{\infty}$, since $ \left|\llbracket f\rrbracket_{z=0} \right|\leq 2 \norm{f}_\infty$. Let $t\in[0,T]$ and $x,y\in \R$ and suppose that $x<0<y$:
\begin{align}
\label{HolderLemmaAux}
\frac{|f(t,y)-f(t,x)|}{|y-x|^\alpha}&\leq \frac{|f(t,y)-f(t,0)|}{|y-x|^\alpha} + \frac{|f(t,0)-f(t,x)|}{|y-x|^\alpha} \\
\nonumber
&\leq 2\left(\frac{1}{2}\cdot\frac{|y|^\alpha}{|y-x|^\alpha}[f(t)]_{\alpha;\R_-} + \frac{1}{2}\cdot\frac{|x|^\alpha}{|y-x|^\alpha}[f(t)]_{\alpha;\R_+}\right)\\
\nonumber
&\leq 2\left(\frac{|y|+|x|}{2|y-x|} \right)^\alpha \max\left\{\sup_{t\in [0,T]}[f(t)]_{\alpha;\R_+},\sup_{t\in [0,T]}[f(t)]_{\alpha;\R_-}\right\}\\
\nonumber
&= 2^{1-\alpha} \max\left\{\sup_{t\in [0,T]}[f(t)]_{\alpha;\R_+},\sup_{t\in [0,T]}[f(t)]_{\alpha;\R_-}\right\},
\end{align}
where we have used the concavity of $z\mapsto z^\alpha$ and the fact that $x<0<y$.

If $x,y<0$ (resp. $x,y>0$), then the left handside of (\ref{HolderLemmaAux}) is simply bounded by $ [f(t)]_{\alpha;\R_-}$ (resp. $[f(t)]_{\alpha;\R_+} $).
\end{proof}

Applying Lemma \ref{HolderExtension} with $f=\dz\left(vU\tilde{N}\right)\in L^\infty([0,T],C^{0,\tilde\alpha}(\R_+))\cap L^\infty([0,T],C^{0,\tilde\alpha}(\R_-))$, where $\tilde{\alpha}:=\min(\alpha,\alpha')$, we obtain that $g\in L^\infty([0,T],C^{0,\tilde\alpha}(\R))$. Through Bound (\ref{dxxReverseHolderHeat}), this leads to:
\begin{align}
\nonumber
\norm{\int_0^t \dzz e^{(t-s) \dzz }g(s)ds}_\infty&
\leq CT^{\frac{\tilde\alpha}2}\norm{g}_{L^\infty([0,T],C^{0,\tilde\alpha}(\R))}\\
\nonumber
&\leq CT^{\frac{\tilde\alpha}2}\left(\max\left\{\sup_{t\in [0,T]}[\dz\left(vU\tilde{N} \right)(t)]_{\tilde\alpha;\R_+},\sup_{t\in [0,T]}[\dz\left(vU\tilde{N} \right)(t)]_{\tilde\alpha;\R_-}\right\}\right. \\
\nonumber
&\left.+\norm{\dz\left(vU\tilde{N} \right)}_{L^\infty([0,T],C^{0,\tilde\alpha}(\R))}\right)\\
\nonumber
&\leq CT^{\frac{\tilde \alpha}2}\norm{v}_{L^\infty([0,T],C^{1,\tilde\alpha}(\R))}\norm{N}_{L^\infty([0,T],C^{1,\tilde\alpha}(\R))}\\
&\leq CT^{\frac{\tilde \alpha}2}\norm{\frac{\r^0}{U}}_{C^{1,\alpha}}\norm{N^0}_{W^{3,\infty}},
\label{C1DiscDecompBdCont},
\end{align}
where we have used that the product of $C^{1,\tilde \alpha}$ functions are $C^{1,\tilde \alpha}$.\\
The last term is treated differently by using explicit computations:
\begin{align*}
\int_0^t \dzz e^{(t-s) D\dzz }h(s)\mathbbm{1}_{z\geq0}ds
&= \int_0^t \dz e^{(t-s)D \dzz }h(s)\delta_{0}ds\\
&= -\int_0^t \frac{z h(s)}{4\pi^\frac{1}{2}D^\frac{3}{2}(t-s)^{\frac32}}e^{-\frac{z^2}{4(t-s)}}ds\\
&= -\int_{\frac{|z|}{\sqrt{t}}}^\infty \frac{h\left(t-\frac{z^2}{u^2}\right)}{2\pi^\frac{1}{2}D^\frac{3}{2}}e^{-u^2}du\hspace{1cm} \text{where }u=\frac{|z|}{\sqrt{ (t-s)}}.
\end{align*}
Hence, by the integrability of $e^{-u^2}$:
\begin{align*}
\norm{\int_0^t \dzz e^{(t-s) D\dzz }h(s)\mathbbm{1}_{z\geq0}ds}_\infty\leq C\norm{h}_{\infty}
\end{align*}
But we have the following identity:
\begin{align*}
&h(s)\\
=&\lim_{z\to 0^+} \dz (vU\tilde N)(s,z) -\lim_{z\to 0^-} \dz (vU\tilde N)(s,z)\\
=&\lim_{z\to 0^+} U(z)\dz (v\tilde N)(s,z)+ \lim_{z\to 0^+}  (v\tilde N)(s,z)\dz U(z)-\lim_{z\to 0^-} U(z)\dz (v\tilde N)(s,z)-\lim_{z\to 0^-}  (v\tilde N)(s,z)\dz U(z) \\
=&v(s,0)\tilde N(s,0)\llbracket \dz U\rrbracket_{z=0}\\
=&-\chi v(s,0)\tilde N(s,0).
\end{align*}
Therefore:
\begin{align}
\label{C1DiscDecompBdDisc}
\norm{\int_0^t \dzz e^{(t-s) D\dzz }h(s)\mathbbm{1}_{z\geq0}ds}_\infty\leq 
C \norm{v}_{\infty}\norm{\tilde N}_{\infty}.
\end{align}
Bringing Bounds (\ref{C1DiscDecompBd}, \ref{C1DiscDecompBdCont}, \ref{C1DiscDecompBdDisc}) together, we conclude that $\int_0^t e^{(t-s) D\dzz }\left(v(s)U\tilde N(s) \right)ds \in L^\infty([0,T], W^{3,\infty}(\R)) $.\\
Thus $\tilde N\in L^\infty([0,T], W^{3,\infty}(\R)) $. \\

\textit{Step 3:} Definition of the map $\bar{x}\mapsto \bar{y}$: Existence and Uniqueness of the solution $N(t,\cdot)=N_\thresh$.

We consider $\r, N$ again in the initial frame $(t,x)$, where they satisfy Equations (\ref{ApproxSys:main}). Note that $\norm{N}_{\infty}=\norm{\tilde N}_{\infty}$ and $\norm{\r}_\infty \leq \norm{v}_{\infty}$. By assumption, we have that $\dx N^0>\underline{m}$ on the interval $[ - \zeta,+\zeta ]$. Therefore by setting $\e=\zeta \underline{m}$, we have that for $x<-\zeta, N^0(x)<N_\thresh-\e$ and $x>\zeta, N^0(x)>N_\thresh+\e$. 

\begin{enumerate}
 \item We start by showing that there exists $T>0$, such that for $t\in [0,T]$ and $x\leq -\zeta$ , we have that $N(t,x)<N_\thresh$. Note that:
 \begin{align*}
  \norm{-\int_0^t e^{(t-s)\dzz}   \r(s)N(s)ds}_{\infty}&\leq   T\norm{N}_{\infty}\norm{\r}_{\infty}\\
  &\leq 4    T \norm{\r^0}_{C^{1,\alpha}}\norm{N^0}_{W^{3,\infty}}.
 \end{align*}
So for $T>0$ small enough, the right handside is smaller than $\frac{\e}{4}$.\\
Choose $T>0$ small engouh, so that for $t\in (0,T]$ we have that:
\begin{align*}
 \frac{1}{\sqrt{4\pi D t}} \int_\zeta^{+\infty} e^{-\frac{x^2}{4Dt}}dx < \frac{\e}{4}.
\end{align*}
From this we can deduce that for $t\in (0,T], x< -\zeta$, by recalling that $\norm{N^0}_\infty=1$, we have:
\begin{align*}
 N(t,x)&=e^{t\dxx}N^0\vert_{x}-\int_0^t e^{(t-s)\dxx}  \r(s)N(s)ds \vert_x\\
 &\leq \frac{1}{\sqrt{4 \pi D t}} \int_{\R} e^{-\frac{(x-y)^2}{4Dt}} N^0(y)dy +\norm{-\int_0^t e^{(t-s)\dxx}   \r(s)N(s)ds}_{\infty}\\
 &< \frac{N_\thresh-\e}{2}+\frac{N_\thresh}{2}+ \frac{\norm{N^0}_{\infty}}{\sqrt{4\pi D t}} \int_\zeta^{+\infty} e^{-\frac{x^2}{4t}}dx +\frac{\e}{4} \\
 & < N_\thresh.
\end{align*}
\item By a similar reasoning, there exists $T>0$, such that for $t\in [0,T]$ and $x\geq \zeta$, we have that $N(t,x)>N_\thresh$.
\item We now show that there exists $T>0$, such that for $t\in [0,T], x\in [-\zeta,+\zeta]$, we have that $\dx N(t,x) \geq \frac{\underline{m}}{8}$.\\
The reasoning is again similar. On the one hand:
\begin{align*}
 \norm{\dx \left( -\int_0^t e^{(t-s)D\dxx}   \r(s)N(s)ds \right) }_{\infty}
 &\leq \int_0^t \frac{C}{\sqrt{t-s}} \norm{\r(s)}_\infty\norm{N(s)}_\infty ds.
\end{align*}
As before we can choose $T>0$ such that the right handside becomes smaller than $\frac{\underline{m}}{8}$ (here the constant $C$ does not depend on $\underline{m}$).\\
On the other hand, by choosing $T>0$ small enough such that for every $t\in[0,T]$:
\begin{align*}
\frac{1}{\sqrt{4\pi D t}}\int_0^{\zeta}e^{-\frac{y^2}{4Dt}}dy\geq \frac{1}{4}.
\end{align*}
In that fashion for $x\in [-\zeta,+\zeta]$:
\begin{align*}
 e^{tD\dxx}\dx N^0\vert_{x}&= \frac{1}{\sqrt{4\pi D t}} \int_\R  e^{-\frac{(x-y)^2}{4Dt}}\dx N^0(y)dy\\
 & \geq \frac{1}{\sqrt{4\pi D t}} \int_{-\zeta}^{\zeta}  e^{-\frac{(x-y)^2}{4Dt}}\dx N^0(y)dy\\
 & \geq \frac{\underline{m}}{\sqrt{4\pi D t}} \int_{-\zeta-x}^{\zeta-x}  e^{-\frac{y^2}{4Dt}} dy\\
  &\geq \frac{\underline{m}}{4},
\end{align*}
by noticing that either $\left[0, \zeta\right]\subset \left[ -\zeta-x,\zeta-x\right]$, when $x\leq 0$, or alternatively that $\left[- \zeta,0\right]\subset \left[-\zeta-x,\zeta-x\right]$, when $x\geq 0$.
From this, we conclude that:
\begin{align*}
\dx N(t,x) = e^{tD\dxx}\dx N^0\vert_{x}- \dx \left( \int_0^t e^{(t-s)D\dxx}   \r(s)N(s)ds \middle) \right\vert_{x} \geq \frac{\underline{m}}{4} - \frac{\underline{m}}{8} =\frac{\underline{m}}{8}.
\end{align*}
\item From the considerations above, we see that there exists $T>0$, such that for $t\in[0,T]$, the equation $N(t)=N_\thresh$ has a unique solution, which we denote by $\bar{y}(t)$. We know that $\bar{y} (t) \in [ -\zeta, +\zeta]$ and $\bar{y} (0)=0$. Furthermore from the preceding analysis we know that $N$ is differentiable and by differentiating the relation $N(t,\bar{y}(t))=N_\thresh$, $\bar{y} (t)$ satisfies an ODE:
\begin{align*}
 \dot{\bar{y} }(t)=-\frac{\dt N(t,\bar{y} (t))}{\dx N(t,\bar{y} (t))}=:\mathcal{F}(t,\bar{y}(t)).
\end{align*}
Since $\norm{N}_{L^\infty([0,T],W^{3,\infty}(\R))} = \norm{\tilde{N}}_{L^\infty([0,T],W^{3,\infty}(\R))} $, $\norm{\r}_{L^\infty([0,T],W^{1,\infty}(\R))} \leq C \norm{v}_{L^\infty([0,T],W^{1,\infty}(\R))} $ and $\dt N = D\dxx N -   \r N$, we have that $\dt N \in L^\infty([0,T],W^{1,\infty}(\R))$. Additionally, since $\dx N(t,\bar{y} (t))\geq \frac{\underline{m}}{4}$ and $\dx N \in L^\infty([0,T],W^{1,\infty}(\R))$,  we have that $(\dx N)^{-1} \in L^\infty([0,T],W^{1,\infty}(\R))$. Therefore $\mathcal{F}\in L^\infty([0,T]\times[-\zeta,zeta])$ is uniformly in time Lipschitz continuous in the second variable. Hence the ODE is well-posed and it admits a unique solution $\bar{y}\in W^{1,\infty}([0,T])$.\\
We have the bound:
\begin{align*}
|\dot{\bar{y}}(t)|&
\leq |\dx N(t,\bar{y}(t))|^{-1}\norm{D\dxx N-  \r N}_{\infty}\\
&\leq \frac{8}{\underline m}\left( D\norm{\dxx N}_{\infty}+  \norm{\r}_{\infty} \norm{N}_{\infty}\right)\\
&\leq C\norm{N^0}_{W^{3,\infty}}\norm{\frac{\r^0}{U}}_{C^{1,\alpha'}},
\end{align*}
where at the end, we have used Bounds (\ref{vbd},\ref{Nbd}). Hence, by the convention that $T<1$, we have $\norm{\bar{y}}_{W^{1,p}}\leq C\norm{N^0}_{W^{3,\infty}}\norm{\frac{\r^0}{U}}_{C^{1,\alpha'}}$. It therefore suffices that $A$ is bigger than the right handside and in that case the map $\bar x \mapsto \bar{y}$ maps $B(A)$ into itself.
\end{enumerate}

\textit{Step 4:} Unique Fixed Point of the map $\bar x \in B(A)\mapsto \bar{y} \in B(A)$.

Given $\bar x_1,\bar x_2\in B(A)$, consider $\bar{y}_1, \bar{y}_2\in B(A)$. Set $\mathcal F_i = -\frac{\dt N_i}{\dx N_i}$, such that $\dot{\bar{y}}_i(t)=\mathcal F_i (t,\bar{y}_i(t)  $ . Note that $\bar{y}_i(t)\in [-\zeta,+\zeta]$ and that therefore $\dx  N_i(t,\bar{y}_i(t))>\frac{\underline{m}}{8}$. 
\begin{align*}
|\dot{\bar{y_1}}(t) - \dot{\bar{y_2}}(t)| &= \left|\mathcal F_1 (t,\bar{y}_1(t))-\mathcal F_2 (t,\bar{y}_2(t))\right|  \\
&\leq \left|\mathcal F_1 (t,\bar{y}_1(t))-\mathcal F_1 (t,\bar{y}_2(t))\right|
+ \left|\mathcal F_1 (t,\bar{y}_2(t))-\mathcal F_2 (t,\bar{y}_2(t))\right|.
\end{align*}
For $x\in[-\zeta,+\zeta]$, $t\in [0,T]$ and by using the lower bounded of $\dx N(t,x) $ from Step 3, we have that:
\begin{align*}
\left|\dx \mathcal{F}_1(t,x)\right| &\leq \left(\sup_{x\in [-\zeta,+\zeta]}\frac{1}{\dx N(t,x)} \right)^2 \norm{\partial_{tx} N \dx N - \dt N\partial_{xxx}N }_{\infty} \\
&\leq C \norm{\left(\partial_{xxx} N-  \dx(\r N) \right)\dx N - (\dxx N-  \r N)\partial_{xxx}N }_{\infty}.
\end{align*}
This latter term is bounded, in particular because of Step 2. Hence:
\begin{align}
\label{Step4-aux1}
\left|\mathcal F_1 (t,\bar{y}_1(t))-\mathcal F_1 (t,\bar{y}_2(t))\right|\leq C\norm{N^0}_{W^{3,\infty}(\R)}^2 \norm{\frac{\r^0}{U}}_{C^{1,\alpha}(\R)} |{\bar{y_1}}(t) - {\bar{y_2}}(t)|  .
\end{align}
For the second term:
\begin{align*}
&\left|\mathcal F_1 (t,\bar{y}_2(t))-\mathcal F_2 (t,\bar{y}_2(t))\right| \\
\leq & \frac{|\dt N_1(t,\bar{y}_2(t))|}{|\dx N_1 (t,\bar{y}_2(t))||\dx N_2(t,\bar{y}_2(t))|}\left|\dx N_1(t,\bar{y}_2(t))-\dx N_2 (t,\bar{y}_2(t)) \right|\\
&+\frac{1}{|\dx N_2(t,\bar{y}_2(t))|}\left|\dt N_1(t,\bar{y}_2(t))-\dt N_2 (t,\bar{y}_2(t)) \right|\\
\leq & C\left(\norm{\dxx N_1 -  \r_1 N_1}_{\infty}\norm{\dx N_1 -\dx N_2}_{\infty}+\norm{\dxx N_1 -\dxx N_2}_{\infty}\right.\\
&\left.+  \norm{\r_1 -\r_2}_{\infty}\norm{N_1}_{\infty}+\norm{\r_2}_\infty \norm{N_1-N_2} \right).
\end{align*}
Now take $(t,y)\in [0,T]\times\R$, we have:
\begin{align*}
&|\dxx N_1(t,y)-\dxx N_2(t,y)|\\
=&|\dzz \tilde N_1(t,y-\bar{x}_1(t))-\dzz \tilde N_2(t,y-\bar{x}_2(t))|\\
\leq &|\dzz \tilde N_1(t,y-\bar{x}_1(t))-\dzz \tilde N_1(t,y-\bar{x}_2(t))|+|\dzz \tilde N_1(t,y-\bar{x}_2(t))-\dzz \tilde N_2(t,y-\bar{x}_2(t))|\\
\leq &\norm{\partial_{zzz}\tilde N_1}_\infty |\bar{x}_1(t) -\bar{x}_2(t)|+\norm{\dzz \tilde N_1 -\dzz\tilde N_2}_\infty.
\end{align*}
We have similar (and slightly easier) bounds for $\norm{\r_1 -\r_2}_{\infty}$, $\norm{N_1 -N_2}_{\infty}$ and $\norm{\dx N_1 -\dx N_2}_{\infty}$. Hence, by using $\norm{\bar{x}_1-\bar{x}_2}_\infty\leq T^{1-\frac{1}{p}}\norm{ \dot{\bar{x}}_1-\dot{\bar{x}}_2}_p $ and Bounds (\ref{vMap-Lip},\ref{NMap-Lip}) this leads to:
\begin{align}
\label{Step4-aux2}
\nonumber &\left|\mathcal F_1 (t,\bar{y}_1(t))-\mathcal F_2 (t,\bar{y}_2(t))\right|\\
\leq & C\norm{N^0}_{W^{3,\infty}}^2 \norm{\frac{\r^0}{U}}_{C^{1,\alpha}}^2\left(T^{1-\frac{1}{p}}+ T^{\frac{p(1-\alpha')-2}{2p}}+T^{\frac{(1-\alpha')(p(1-\alpha)-2)}{4p}} \right)\norm{\dot{\bar{x}}_1-\dot{\bar{x}}_2}_{p}.
\end{align}
Combining Inequalities (\ref{Step4-aux1},\ref{Step4-aux2}), using the conventions and setting $K:=C\norm{N^0}_{W^{3,\infty}}^2 \norm{\frac{\r^0}{U}}_{C^{1,\alpha}}^2$, we find that:
\begin{align*}
| \dot{\bar{y_1}}(t) - \dot{\bar{y_2}}(t)| \leq K \left( |\bar{y}_1(t)-\bar{y}_2(t)|+\norm{\dot{\bar{x}}_1-\dot{\bar{x}}_2}_{p} \right).
\end{align*}
By Grönwall's lemma, we obtain:
\begin{align*}
\norm{\bar{y}_1-\bar{y}_2}_\infty\leq\left(e^{ K T}-1\right)\norm{\dot{\bar{x}}_1-\dot{\bar{x}}_2}_{p}.
\end{align*}
Bootstrapping the penultimate estimate, we can prove that:
\begin{align*}
\norm{\dot{\bar{y_1}}-\dot{\bar{y_2}}}_p \leq  K T^{\frac{1}{p}} e^{ K T}\norm{\dot{\bar{x}}_1-\dot{\bar{x}}_2}_{p}.
\end{align*}
By noticing that $\norm{\bar{y}_1-\bar{y}_2}_p\leq T^{\frac{1}{p}}\norm{\bar{y}_1-\bar{y}_2}_\infty $ and using the two last inequalities, we find that the map $\bar x \in B(A) \mapsto \bar{y}\in B(A)$ is a contraction in the $W^{1,p}$-norm for $T>0$ small enough. Thus, we have a unique fixed point, which concludes the proof of Theorem \ref{ThmExistence}.
\end{proof}

\begin{remark}
\label{rk-optimality-N}
The Step 2 of the preceding proof naturally leads to the question whether $\tilde{N}\in L^\infty([0,T],C^{3,\alpha''}(\R))$ for $\alpha''\in (0,1)$. In fact, by the reasoning in Step 2, this is equivalent to wondering, whether $\int_0^t \dzz e^{(t-s) D\dzz }h(s)\mathbbm{1}_{z\geq0}ds \in L^\infty([0,T], C^{0,\alpha''}(\R))$. But we see that if we take $h\equiv 1$ and denote $\theta>0$ the constant such that $\frac{1}{\pi^{\frac{1}{2}}}\int_0^{\theta}e^{-u^2}du=\frac{1}{4}$. Then, by applying the preceding computations between $(t,z)=(\min(y^{2}\theta^{-2},T),y)$ and $(\min(y^{2}\theta^{-2},T),0)$, we find:
\begin{align*}
&y^{-\alpha''}\left|\left(\left.\int_0^{\min(y^{2}\theta^{-2},T)} \dzz e^{(t-s) D\dzz }\mathbbm{1}_{z\geq0}ds\right|_{z=0}\right) - \left(\left.\int_0^{\min(y^{2}\theta^{-2},T)} \dzz e^{(t-s) D\dzz }\mathbbm{1}_{z\geq0}ds\right|_{z=y}\right) \right|\\
=&\frac{y^{-\alpha''}}{2\pi^{\frac{1}{2}}D^{\frac{3}{2}}}\int_0^{\max\left(\theta,\frac{y}{\sqrt{T}}\right)}e^{-u^2}du\\
\geq &\frac{y^{-\alpha''}}{8D^{\frac{3}{2}}}.
\end{align*}
Hence the expression is unbounded and we have that $\int_0^t \dzz e^{(t-s) D\dzz }\mathbbm{1}_{z\geq0}ds \notin L^\infty([0,T], C^{0,\alpha''}(\R))$. This establishes that the regularity $\tilde{N}\in L^\infty([0,T], W^{3,\infty}(\R)) $ is in fact critical.\\
\end{remark}

\begin{cor}
\label{cor-existence}
Suppose that in addition to the assumptions of Theorem \ref{ThmExistence}, the initial conditions $(\r^0,N^0)$ satisfy the following conditions:
\begin{enumerate}
\item $\frac{\dx {N}^0}{{N}^0}, \frac{\dx {\r}^0}{{\r}^0}\in L^\infty(\R)$,
\item $\liminf_{x\to -\infty} \frac{\dx {N}^0}{{N}^0}\geq \nu >0$ and $\limsup_{x\to +\infty}\frac{\dx {\r}^0}{{\r}^0}\leq -\eta<0$,
\item $1-{N}^0$ is square-integrable at $x=+\infty$.
\end{enumerate}
Then $(\r,N)$, the solution given by Theorem \ref{ThmExistence}, satisfies the condition $\dx N\geq 0$ locally in time and hence $(\r,N)$ is in fact a solution to System (\ref{diffusivemodel:main}).
\end{cor}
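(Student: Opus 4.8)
The plan is to reduce the corollary to showing that $\dx N\ge 0$ on $[0,T']\times\R$ for some $T'\le T$, and to obtain this by a comparison (maximum‑principle) argument rather than an energy estimate: a weighted $L^2$ bound on the negative part $(\dx N)^{-}$ does not close, because the source $-(\dx\r)N$ in the equation $\dt(\dx N)-D\dxx(\dx N)+\r\,\dx N=-(\dx\r)N$ is of indefinite sign, so Grönwall only yields that $(\dx N)^{-}$ is \emph{small} for small time, not identically zero. Since $N^{0}\ge 0$ is not identically zero, the strong maximum principle applied to $\dt N-D\dxx N+\r N=0$ gives $N>0$ for $t>0$, so $w:=\dx\log N=\dx N/N$ is well defined, and a direct computation (using $N\,w=\dx N$ and eliminating $\dxx N$, $\dt N$) shows
\[
\dt w-D\dxx w-2Dw\,\dx w=-\dx\r .
\]
Two features matter: this equation carries \emph{no interface singularity} (the $N$‑equation is uniformly parabolic with continuous coefficients), and its quasilinear drift $-2Dw$ involves $w$ but not $\dx w$, so a comparison principle applies once $w$ is bounded; the bound $\norm{w(t)}_{\infty}\le\norm{w^{0}}_{\infty}+t\norm{\dx\r}_{L^{\infty}([0,T]\times\R)}$ follows from hypothesis $1$ and from $\r\in L^{\infty}([0,T],W^{1,\infty}(\R))$ (Theorem \ref{ThmExistence}). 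The entire difficulty sits at $x=+\infty$: $w^{0}>0$ on $\R$ with $\liminf_{x\to-\infty}w^{0}\ge\nu>0$ by hypothesis $2$, but $w^{0}(x)\to 0$ as $x\to+\infty$ (since $N^{0}\to 1$), so $\inf_{\R}w^{0}=0$ and $w\equiv 0$ is \emph{not} a subsolution — that would force $\dx\r\le 0$ everywhere.

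First I would treat the far right. Recall that the curve $\bar x$ produced by Theorem \ref{ThmExistence} is Lipschitz with $\bar x(t)\in[-\zeta,\zeta]$, and (after possibly shrinking $T$) $N(t,x)>N_\thresh$ for $x\ge\zeta$, so $\bar x(t)<\zeta$. On $\{x>\bar x(t)\}$ the $\r$‑equation decouples to $\dt\r-\dxx\r=\r$, and by a Cole–Hopf computation the logarithmic derivative $\phi:=\dx\log\r$ (legitimate since $\r>0$ there) solves the viscous Burgers equation $\dt\phi-\dxx\phi-2\phi\,\dx\phi=0$ on $\{x>\bar x(t)\}$. Using $\limsup_{x\to+\infty}\phi^{0}\le-\eta<0$ (hypothesis $2$) together with the $L^\infty$ bound on $\phi$ at the moving boundary $x=\bar x(t)$ (which stays away from $+\infty$), I would compare $\phi$ with a barrier whose initial profile is a large constant to the left and $-\eta/2$ to the right: this data is decreasing, hence it develops a rarefaction and the barrier stays below $-\eta/4<0$ on $[R,\infty)$ for $t$ small, for a suitable large $R$ (depending on $T'$ and the decay rate). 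Then $\dx\r=\r\phi\le 0$, i.e. $-\dx\r\ge 0$, on $[R,\infty)\times[0,T']$, so there $w\ge 0$ by comparison with the subsolution $0$ — provided the boundary value $w(\cdot,R)$ is $\ge 0$, which the next step supplies.

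Next I would treat $(-\infty,R]$. There $m:=\inf_{x\le R}w^{0}>0$ (continuity and positivity of $w^{0}$ together with the $\nu$‑bound at $-\infty$), while $B:=\norm{\dx\r}_{L^{\infty}([0,T]\times\R)}<\infty$; shrinking $T'$ so that $T'B<m$, the constant‑in‑space function $t\mapsto m-Bt>0$ is a subsolution of the $w$‑equation on $(-\infty,R]$, again modulo its value at $x=R$. The two half‑line estimates are coupled only through $w(\cdot,R)$, where each required datum ($0$ on the right, $m-Bt$ on the left) is implied by $w(t,R)\ge m-Bt>0$; this self‑referential situation is closed by a standard continuity (first‑touching‑time) argument — were $w$ to first reach $0$ or $m-Bt$ at some $(t_{0},x_{0})$ with $t_{0}\le T'$, the corresponding PDE inequality at a point of tangency would be violated — or, equivalently, by running a single global subsolution obtained by smoothly patching $m-Bt$ (for $x\le R$) and $0$ (for $x\ge R$). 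Either way one gets $w\ge 0$, hence $\dx N=wN\ge 0$, on $[0,T']\times\R$.

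Finally, to conclude that $(\r,N)$ solves System (\ref{diffusivemodel:main}), it remains to verify that the asymptotics are preserved: $N(t,x)\to 0$ as $x\to-\infty$ (the heat flow keeps the exponential decay of $N^{0}$ forced by hypothesis $2$) and $N(t,x)\to 1$ as $x\to+\infty$ (from $1-N^{0}\in L^{2}$ at $+\infty$, hypothesis $3$, together with the exponential decay of $\r$, which keeps $\r N$ summable there); combined with $N_\thresh\in(0,1)$ and $\dx N\ge 0$, this gives $\{x:N(t,x)\le N_\thresh\}=(-\infty,\bar x(t)]$, with $\dx N>0$ on this set by the strong maximum principle for the $w$‑equation, hence $\sgn(\dx N)=1$ and $\1_{N\le N_\thresh}=\1_{x\le\bar x(t)}$ there, so Systems (\ref{ApproxSys:main}) and (\ref{diffusivemodel:main}) coincide along $(\r,N)$. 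The main obstacle is the far‑right analysis: because $\dx\r$ tends to $0$ through negative values at $+\infty$, the sign condition $\dx\r\le 0$ is borderline and cannot be read off a perturbative or energy argument — the Cole–Hopf reduction to Burgers, the control of $\phi$ at the moving interface $\bar x(t)$, and the rarefaction barrier are exactly what make it work. A secondary technical point is the rigorous justification of the comparison principle for the quasilinear $w$‑equation on the unbounded line and the gluing of the two half‑line barriers.
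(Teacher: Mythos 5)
Your plan correctly identifies the key structural facts that drive the paper's proof as well — the sign of $\dx\r$ at $x=+\infty$ is what allows monotonicity of $N$ to persist, and $w=\dx N/N$ is the right quantity — but it then takes a genuinely different route, and that route has two real gaps that the paper's shorter mechanism avoids.

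First, the far-right control of $\dx\r$ does not require Cole--Hopf or a Burgers rarefaction barrier. The paper establishes, by the same Duhamel fixed-point estimate it proves for $w$, that $q:=\dz v/v\in L^\infty([0,T]\times\R)$ and moreover $q\in C([0,T],L^\infty(\R))$. Since $\dz\tilde\r/\tilde\r=q-\chi$ for $z>0$, the hypothesis $\limsup_{x\to+\infty}\dx\r^0/\r^0\le-\eta$ gives $\dz\tilde\r^0/\tilde\r^0<-\eta$ on $z>A$, and $L^\infty$-continuity in time propagates the strict sign with margin $-\eta/2$ for $T$ small. Your barrier argument is not wrong in spirit, but it trades a two-line estimate for a heuristic one — one would have to control a viscous Burgers comparison across the moving boundary $\bar x(t)$ and quantify the spread of the fan — and you acknowledge these are only sketched.

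Second, the gluing of your two half-line subsolutions is a genuine gap, not a technicality. The candidate $\psi(t,x)=(m-Bt)\1_{x\le R}+0\cdot\1_{x>R}$ is discontinuous; a first-touching argument at $x_0=R$ is ill-posed because $\dx\psi$ is a negative Dirac, and a "smooth patch" in the transition zone is not obviously a subsolution because $-D\dxx\psi$ acquires the wrong sign there and is not dominated by $-\dx\r$. The fix — and it is in fact what the paper does — is to use \emph{no} subsolution at all on $(-\infty,R]$: since $w^0\ge m>0$ there, the $L^\infty$-continuity of $w$ in time (the paper's Step~1, or the analogue of what you need anyway to make sense of $w$) gives $w\ge m/2>0$ on $(-\infty,R]\times[0,T']$ for $T'$ small. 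This simultaneously supplies the Dirichlet datum $w(\cdot,R)>0$ needed to run your one-sided comparison with the (strict) subsolution $-\epsilon t$ on $[R,\infty)$, and then $\epsilon\to 0$ gives $w\ge 0$ there. Once that repair is made, your comparison alternative on the right half-line is viable.

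Finally, your opening remark that "a weighted $L^2$ bound on $(\dx N)^-$ does not close" is a fair criticism of the \emph{naive} energy estimate on $\dx N$, but it does not anticipate the paper's actual mechanism. The paper does not estimate $(\dx N)^-$; it takes finite differences $f(t,z)=\tilde N(t,z+h)-\tilde N(t,z)$, which solve $\dt f-\dot{\bar x}\dz f-D\dzz f=g$ with $g=-\tilde\r(\cdot,\cdot+h)\tilde N(\cdot,\cdot+h)+\tilde\r\tilde N$, and observes that $f<0$ on $z>A$ forces $g\ge 0$ (because $\dz\tilde\r<0$ there), so $f_-g\ge 0$ pointwise and the energy identity $\frac{d}{dt}\frac12\int(f_-)^2=-D\int(\dz f_-)^2-\int f_-g\le 0$ closes \emph{exactly}, not approximately, yielding $f_-\equiv 0$ from $f_-(0,\cdot)\equiv 0$. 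This sign structure is precisely the ingredient your dismissal of energy methods missed. In short: same key insight, but the paper's finite-difference energy argument sidesteps both the moving-boundary Burgers barrier and the subsolution-gluing problem that your proposal leaves open.
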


Let us briefly comment on the assumptions of Corollary \ref{cor-existence}. The assumption on ${N}^0$ implies that ${N}^0$ increases at least exponentially at $x=-\infty$. The assumption, that $1-{N}^0$ is square-integrable at $x=+\infty$, is of course more restrictive than the condition $\lim_{x\to+\infty}N^0(x)=1$. We will see in the next Section that the traveling wave solution satisfies the property $\tilde{N}(z)=1+p(z)$ with $p$ a function that is dominated by an exponentially decreasing function at $z=+\infty$, which is then square-integrable at $z=+\infty$. Concerning the assumption on ${\r}^0$, we already know that $\tilde \r=vU$ with $U$ an exponentially decreasing function at $z=+\infty$ and $v$ a bounded function. Hence the additional condition translates the fact that $\limsup_{x\to +\infty} \left(\frac{\dx v^0}{v^0} \right)\leq\chi -\eta$. Considering that we must have $\liminf_{x\to +\infty} \frac{\dx v^0}{v^0} \leq 0$, otherwise $v^0$ is not bounded, this assumption translates a restriction on the oscillations of $\frac{\dx v^0}{v^0}$.

\begin{proof}
1. We start by showing that $w:=\frac{\dz \tilde N}{\tilde N}$ is well-defined. In fact by dividing Equation (\ref{movingmodel:b}) by $\tilde{N}$, we obtain:
\begin{align*}
\frac{\dt \tilde{N}}{\tilde{N}} = D (\dz w + w^2)+\dot{\bar{x}} w -  vU.
\end{align*}
Then by observing that $\dz\left(\frac{\dt \tilde{N}}{\tilde{N}}\right)=\partial_{zt}\log \tilde{N} = \dt\left(\frac{\dz \tilde{N}}{\tilde{N}}\right)$ and by differentiating the preceding Equation, $w$ satisfies the following equation:
\begin{align*}
\dt w = D (\dzz w + \dz\left( w^2\right))+\dot{\bar{x}} \dz w -  \dz \left(vU\right).
\end{align*}
This leads to the following representation formula for $w$:
\begin{align*}
w(t)= e^{tD\dzz}w^0+\int_0^t e^{(t-s)D\dzz} \left(D\dz \left(w^2\right)(s)+ \dot{\bar{x}}(s)\dz w(s)  -  \dz \left(v(s)U\right) \right)ds.
\end{align*}
By arguments similar to the ones exposed in the proof of Theorem \ref{ThmExistence}, we simply must show that for $R>0$ big enough there exists $T>0$ such that the right handside defines a contraction from $L^\infty([0,T],B(R))$ into itself, where $B(R)=\{ f\in L^\infty(\R),\norm{f}_\infty\leq R\}$. We merely treat the term $w \in L^\infty([0,T],B(R)) \mapsto \int_0^t e^{(t-s)D\dzz} D \dz \left( w^2(s)\right) ds \in  L^\infty([0,T],B(R))$. Let $w_1,w_2 \in  L^\infty([0,T]\times\R)$, we have:
\begin{align*}
&\norm{\int_0^t e^{(t-s)D\dzz} \left( \dz \left(w_1^2\right)(s) - \dz \left(w^2_2\right)(s) \right) ds}_\infty \\
\leq & C\int_0^t \norm{ w_1^2(s)- w_2^2(s)}_\infty \frac{ds}{\sqrt{t-s}} \\
\leq & C\int_0^t \norm{ w_1(s)- w_2(s)}_\infty\norm{ w_1(s)+ w_2(s)}_\infty \frac{ds}{\sqrt{t-s}} \\
\leq & CR\sqrt{ T} \norm{w_1-w_2}_{\infty}.
\end{align*}
Hence $w=\frac{\dx \tilde N}{\tilde N}\in L^\infty([0,T]\times\R)$. 

Furthermore, we show that $w\in C([0,T],L^\infty(\R))$. The map $t\mapsto e^{tD\dxx}w_0 $ is continuous and in addition by noticing, for instance, that:
\begin{align*}
& \norm{\int_{0}^{t+h} e^{(t+h-s)D\dzz} D\dz \left(w^2\right)(s)ds-\int_{0}^{t} e^{(t-s)D\dzz} D\dz \left(w^2\right)(s)ds}_\infty \\
=& \norm{\int_{t}^{t+h} e^{(t+h-s)D\dzz} D\dz \left(w^2\right)(s)ds+\left(1-e^{hD\dzz}\right)\int_{0}^{t} e^{(t-s)D\dzz} D\dz \left(w^2\right)(s)ds}_\infty \\
\leq &   C\norm{w}^2_{\infty}\int_{t}^{t+h}  \frac{ds}{\sqrt{t+h-s}}+\norm{\left(1-e^{hD\dzz}\right)\int_{0}^{t} e^{(t-s)D\dzz} D\dz \left(w^2\right)(s)ds}_\infty.
\end{align*}
When $h\to 0$, the first term clearly tends to $0$ and so does the second term by strong continuity of the heat semi-group. \\

2. Next we show that $q=\frac{\dz v}{v}$ is well-defined. $q$ satisfies:
\begin{align*}
\dt q =\dzz q + \dz (q^2) +\dz (\beta q)  +\dz\gamma.
\end{align*}
The proof is similar to the preceding point. However, one should notice the following fact, in order to prove that the map: $\int_0^te^{(t-s)\dzz}\dz\gamma ds\in L^\infty([0,T]\times \R)$. Indeed, by observing that $\dz\gamma=\chi\left(\chi+\frac{1}{\chi}-\dot{\bar{x}}(t)\right)\delta_0$ and applying Hölder's inequality, we have that:
\begin{align*}
\norm{\int_0^te^{(t-s)\dzz}\dz\gamma ds}_\infty& = \norm{\chi \int_0^t \frac{\chi+\frac{1}{\chi}-\dot{\bar{x}}(s)}{\sqrt{4\pi(t-s)}}e^{-\frac{z^2}{4(t-s)}}ds}_\infty \\
&\leq C \norm{\chi+\frac{1}{\chi}-\dot{\bar{x}}}_{p} .
\end{align*}

3. We establish that $\tilde{N}$ is nondecreasing. To do so we start by noticing that since $\limsup_{z\to +\infty} \frac{\dz \tilde{\r}^0}{\tilde{\r}^0}<-\eta<0$. There exists an $A>0$ such that for every $z>A$, $\frac{\dz v^0(z)}{v^0(z)} + \frac{U'(z)}{U}<-\eta$. But by continuity of $t\mapsto \frac{\dz v(t,\cdot)}{v(t,\cdot)}$ in $L^\infty(\R)$, there exists $T>0$, such that for every $t\in [0,T]$ and $z>A$, $\frac{\dz \tilde \r(t,z)}{\tilde \r(t,z)} = \frac{\dz v(t,z)}{v(t,z)} + \frac{U'(z)}{U}<-\frac{\eta}{2} $. In particular $\dz \tilde \r (t,z) <0$ for $(t,z)\in [0,T]\times [A,+\infty)$.\\
Now because of Condition 1 and 3, we have in fact that for $z \in (-\infty,A], w^0(z)\geq \frac{\nu}{2} >0$ and by the same argument, we must have for $T>0$ small enough that for $t\in[0,T],z\in(-\infty,A], w(t,z)\geq \frac{\nu}{4}>0$. \\
Therefore it remains to show that on the interval $[A,+\infty)$, we also have that $\dz \tilde N\geq 0$. For $h>0$, let $f(t,z): = \tilde N(t,z+h)-\tilde N(t,z)$ and $g(t,z):=-  \tilde \r(t,z+h)\tilde N(t,z+h)+  \tilde  \r(t,z)\tilde N(t,z)$. Then we have that:
\begin{align}
\label{heat-fh-gh}
\dt f -\dot{\bar{x}}\dz f- D\dzz f = g.
\end{align} 
Furthermore for $(t,z) \in [0,T]\times (-\infty,A)$, we have that $f\geq 0$ by the preceding and for $z>A$, if $f(t,z)=\tilde N(t,z+h)-\tilde N(t,z)<0$, then we must have that $g(t,z)=-  \tilde\r(t,z+h)\tilde N(t,z+h)+ \tilde \r(t,z)\tilde N(t,z) \geq 0$, since $\tilde \r(t,z+h)\leq \tilde \r(t,z)$. Therefore we have $ f_-g\geq 0$, where $(\cdot)_-=-\min(0,\cdot)$. 

We have that $g(t,\cdot)\in L^2(\R)$, since $\tilde N$ is dominated at $z=-\infty$ by $e^{\frac{\nu z}{4}}$ and $\tilde \r$ is dominated at $z=+\infty$ by $e^{-\frac{\eta z}{2}}$. Notice that $f(0,\cdot)\in L^2(\R)$ by the assumption that $1-N^0$ is square-integrable at $x=+\infty$. Therefore $f(t\,\cdot)\in L^2(\R)$ as solution to Equation (\ref{heat-fh-gh}). Hence, the following computations are justified:

\begin{align*}
\frac{d}{dt}\left(\frac{1}{2}\int_\R (f_-)^2\right)& = 
\int_\R f_- \dt f_- \\
&= -\int_\R f_- \dt f  \text{, since }f_-\dt \left(f_-\right) = -f_- \dt f \\
&= -\dot{\bar{x}}(t) \int_\R  f_- \dz f -D \int_\R  f_- \dzz f - \int f_- g \\
&= \dot{\bar{x}}(t) \int_\R  f_- \dz (f_-) -D \int_\R  f_- \dzz f - \int f_- g\text{, since }f_-\dz \left(f_-\right) = -f_- \dz f \\
&\leq \frac{\dot{\bar{x}}(t)}{2} \int_\R  \dz ((f_-)^2)+ D \int_\R (\dz f_-)\dz f \text{, since } f_-g \geq 0 \\
&= -D\int_\R (\dz f_-)^2  \text{, since }\dz f_-\dz f = -(\dz f_-) ^ 2 \\
&\leq 0.
\end{align*}
But, by assumption $f_-(0,\cdot) \equiv 0$. Hence $f_-(t,\cdot) \equiv 0$ and $\tilde N(t,\cdot)$ is nondecreasing.
\end{proof}

\section{Traveling Waves for the Parabolic System}
\label{Sect-TW}

In this Section, we will investigate the existence of waves for the parabolic System (\ref{diffusivemodel:main}), \textit{i.e.} solutions of the form $( \r(t,x),N(t,x))=(\tilde \r(x-\s t),\tilde N(x-\s t))$, for a velocity $\s $ to be determined. Set $z=x-\s t$, any traveling wave solution must satisfy the following equations:
\begin{subequations}\label{diffusivewave:main}
\begin{align}\label{diffusivewave:a}
&-\s  \tilde\r' - \tilde\r' +  ( \chi \sgn(\dz N) \mathbbm{1}_{\tilde N\leq N_\thresh  }\tilde\r)' = \mathbbm{1}_{\tilde N>N_\thresh} \tilde\r  \\ 
&-\s  \tilde N' - D  \tilde N'' =-  \tilde\r \tilde N. \label{diffusivewave:b}
\end{align}
\end{subequations}
For the sake of concision, we will drop the diacritical $\tilde{} $. Applying the assumption that $N$ is increasing, Equation (\ref{diffusivewave:a}) reduces to a second-order linear ordinary differential equation with piecewise-constant coefficients. By translation invariance of the traveling waves, we suppose that $N(0)=N_\thresh$. Adding the $C^1$-jump relation (\ref{C1discRel}), that comes from the continuity of the flux, we obtain the following problem:
\begin{align}
\label{waverho}
\left\{\begin{array}{ll}
-\s\r'- \r''+\chi \r' =0 & \text{ for }z<0 \\
-\s\r'-\r'' =\r & \text{ for }z>0
\end{array} \right.
\hspace{.5cm} \text{ and } \hspace{.5cm} \r'(0^+)-\r'(0^-)=-\chi\r(0).
\end{align}
We solve this problem explicitly and thus deduce all bounded and nonnegative traveling wave profiles for $\r$. Moreover, there exists a minimal speed $\s^*$, such that for every $\s\in[\s^*,+\infty)$, there exists a unique (up to a multiplicative factor) traveling wave profile $\r^\s$. In a second step, given the profile $\r^\s$, we construct a corresponding traveling wave profile $N^\s$ and the condition $N(0)=N_\thresh$ will fix the multiplicative factor of $\r$, thus leading to a unique traveling wave profile $(\r^\s,N^\s)$ for each $\s \geq \s^*$.

Let us introduce some notations, before moving on to the statement of Theorem \ref{thmparwave}. Define the Fisher/Kolmogorov–Petrovsky–Piskunov speed $\s_\text{F/KPP}:=2$. Note that $\chi+\frac{1}{\chi}\geq \s_\text{F/KPP}$, with equality if and only if $\chi=1$. Furthermore set for $\s\geq 2$, $\mu_\pm(\s):=\frac{\s\pm\sqrt{\s^2-4}}{2}$. We then have the following inequality for $\s>\s_\text{F/KPP}=2$:
$$0<\mu_-(\s)<\mu_-\left(\s_\text{F/KPP}\right)=1=\mu_+\left(\s_\text{F/KPP}\right)<\mu_+(\s) 
$$
In addition, the function $\sigma \mapsto \mu_+(\s)$ (resp. $\sigma \mapsto \mu_-(\s)$) is increasing (resp. decreasing).

\begin{thm}
\label{thmparwave}
Under the assumption that $N$ is increasing, there exists a minimal speed $\s^*$, such that there  exists a bounded and nonnegative traveling wave profile $(\r^\s(z),N^\s(z))$ if and only if $\s\geq \s^*$. Given $\s\geq \s^*$, the traveling wave profile $(\r^\s(z),N^\s(z))$ is unique. Moreover, the exact value of $\s^*$ is given by Formula (\ref{speedformula}) and depends on the value of $\chi$:
\begin{itemize}
\item[\textendash] if $\chi>1$, then $\s^*=\chi+\frac{1}{\chi}$,
\item[\textendash] if $\chi \leq 1$, then $\s^*=\s_\text{F/KPP}=2$
\end{itemize}
Furthermore, the functions $\r^\s$ satisfy the following properties for $z\geq 0$ with $C^\s,D^\s>0$:
\begin{itemize}
\item[\textendash] for $\s>\s^*,  \r^\s(z)= A^\s e^{-\mu_-(\s)z}+ B^\s e^{-\mu_+(\s)z}$
\item[\textendash] for $\chi>1$, $\s=\s^*=\chi+\frac{1}{\chi},  \r^{\s^*}(z)= A^{\s^*} e^{-\mu_+({\s^*})z}$ and $\mu_+({\s^*})=\chi$
\item[\textendash] for $\chi\leq 1, \s=\s_\text{F/KPP}, \r^{\s_\text{F/KPP}}(z)= A^{\s_\text{F/KPP}}( (1-\chi)z+1) e^{-z}$
\end{itemize}
In addition, let $\mu>0$, with $\mu\neq \frac{\s}{D}$, such that $\r(z)\leq Ce^{-\mu z}$ for a constant $C>0$, then there exists another constant $C>0$, such that for $z\in \R$:
\begin{align}
\label{NasymptoticProperty}
|N(z)-1|\leq C\left(e^{-\frac{\s}{D}z}+e^{-\mu z} \right).
\end{align}
\end{thm}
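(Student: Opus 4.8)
The plan is to solve the $\r$-profile equation \eqref{waverho} explicitly on each half-line, glue the two pieces using continuity of the flux (the $C^1$-jump relation at $z=0$), then reconstruct $N^\s$ from $\r^\s$ via the linear ODE \eqref{diffusivewave:b}, and finally deduce \eqref{NasymptoticProperty} by variation of parameters. Under the monotonicity hypothesis, for $z<0$ the profile equation is $\r''+(\s-\chi)\r'=0$ with general solution $\r(z)=a+b\,e^{(\chi-\s)z}$, and for $z>0$ it is $\r''+\s\r'+\r=0$, whose characteristic roots are $-\mu_-(\s)$ and $-\mu_+(\s)$; thus $\r(z)=A\,e^{-\mu_-(\s)z}+B\,e^{-\mu_+(\s)z}$ when $\s>2$, $\r(z)=(A+Bz)e^{-z}$ when $\s=2$, and $\r$ oscillates when $\s<2$. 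A bounded nonnegative profile cannot oscillate on $z>0$, and a short direct check rules out $\s<2$ altogether (it forces $\r\equiv0$, hence an inadmissible $N$), so $\s^*\geq 2=\s_{\text{F/KPP}}$.

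Next I match the branches at $z=0$ via continuity and $\r'(0^+)-\r'(0^-)=-\chi\,\r(0)$. Every admissible speed turns out to satisfy $\s\geq\s^*>\chi$, so on $z<0$ boundedness forces $\r\equiv\r_0:=\r(0)$ and $\r'(0^-)=0$; combined with $A+B=\r_0$ and the jump relation this gives $A=\frac{\mu_+(\s)-\chi}{\mu_+(\s)-\mu_-(\s)}\,\r_0$ and $B=\frac{\chi-\mu_-(\s)}{\mu_+(\s)-\mu_-(\s)}\,\r_0$, with the obvious degeneracy at the double root $\s=2$. Since $\mu_-(\s)<\mu_+(\s)$, nonnegativity of $\r$ on $z\geq0$ reduces to $A\geq0$ (respectively $B\geq0$ at $\s=2$), i.e.\ to $\mu_+(\s)\geq\chi$; using that $\s\mapsto\mu_+(\s)$ is increasing with $\mu_+(2)=1$ and $\mu_+(\chi+\tfrac1\chi)=\chi$ for $\chi>1$, this holds for all $\s\geq2$ when $\chi\leq1$ (so $\s^*=2$) and, for the constant left branch, exactly for $\s\geq\chi+\tfrac1\chi$ when $\chi>1$. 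When $\chi>2$ one must additionally exclude the non-constant left branch for $\s\in[2,\chi)$: imposing $A\geq0$ together with $a\geq0$ forces $\mu_+(\s)\geq\s$, which is false for $\s>2$, and the borderline $\s=2$ is checked directly — so no admissible profile exists for $\s<\chi+\tfrac1\chi$ in the large-bias regime, giving $\s^*=\chi+\tfrac1\chi$. The profile formulas of the statement then follow: $\mu_+(\s^*)=\chi$ makes $A=0$, so $\r^{\s^*}=A^{\s^*}e^{-\chi z}$ when $\chi>1$; the double root at $\s=2$, $\chi\leq1$, gives $\r^{\s_{\text{F/KPP}}}=A^{\s_{\text{F/KPP}}}\big((1-\chi)z+1\big)e^{-z}$; and for $\s>\s^*$ one has $A^\s>0$, hence the two-exponential form, with leading decay rate $\mu_-(\s)$ strictly smaller than the rate ($\chi$ when $\chi>1$, resp.\ $1$ when $\chi\leq1$) of $\r^{\s^*}$ — the ``slower decay'' observation needed in Section~\ref{Sect-Asympt}.

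Given $\r^\s$, determined up to the scalar $\r_0>0$, I solve $D N''+\s N'-\r^\s N=0$. On $z\leq0$, where $\r^\s\equiv\r_0$, boundedness and $N(-\infty)=0$ force $N(z)=N_\thresh\,e^{r_+(\r_0)z}$ with $r_+(\r_0)=\frac{-\s+\sqrt{\s^2+4D\r_0}}{2D}>0$, so $N'(0^-)=r_+(\r_0)N_\thresh$. On $z>0$, a maximum-principle argument shows the solution with this Cauchy datum stays positive and increasing (at a first critical point $z_0$ one would get $N''(z_0)=\r^\s(z_0)N(z_0)/D>0$, a contradiction), hence converges to a finite limit $L(\r_0)>0$ at $+\infty$. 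One checks that $\r_0\mapsto L(\r_0)$ is continuous and monotone, with $L(\r_0)\to N_\thresh$ as $\r_0\to0^+$ and $L(\r_0)\to+\infty$ as $\r_0\to+\infty$; since $N_\thresh<1$, there is a unique $\r_0$ with $L(\r_0)=1$, which fixes $(\r^\s,N^\s)$, completes the ``if'' direction for every $\s\geq\s^*$, and yields uniqueness.

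Finally, for \eqref{NasymptoticProperty}: writing $p=N-1$, $p$ solves $D p''+\s p'=\r N=:g$ on $z>0$ with $|g(z)|\leq\norm{N}_\infty\,C\,e^{-\mu z}$ and $p\to0$ at $+\infty$. Since the kernel of $D\dzz+\s\dz$ is spanned by $1$ and $e^{-\s z/D}$, variation of parameters yields a particular solution with $|p_{\mathrm{part}}(z)|\leq C\,e^{-\mu z}$ — and this is exactly where $\mu\neq\s/D$ enters, so that the integral $\int e^{\s s/D}g(s)\,ds$ either converges or grows at a controlled rate — while every solution decaying at $+\infty$ is $p_{\mathrm{part}}+c\,e^{-\s z/D}$, with $c$ fixed by $p(0)=N_\thresh-1$; hence $|N(z)-1|\leq C\big(e^{-\mu z}+e^{-\s z/D}\big)$ for $z\geq0$. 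For $z\leq0$ the exact expression $N(z)=N_\thresh\,e^{r_+(\r_0)z}\in(0,N_\thresh)$ gives $|N(z)-1|\leq1\leq C\big(e^{-\s z/D}+e^{-\mu z}\big)$ as soon as $C\geq\tfrac12$, completing \eqref{NasymptoticProperty}. I expect the main obstacles to be the exhaustive matching analysis of the second step — especially excluding nonnegative profiles for $\s\in[2,\s^*)$ in the large-bias regime, including the $\chi>2$ subcase with a non-constant left branch — and the shooting argument of the third step, where pinning down the scaling $\r_0$ requires controlling the range and monotonicity of $\r_0\mapsto L(\r_0)$; the asymptotic estimate is routine once the linear structure is isolated.
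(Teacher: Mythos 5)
Your proof is correct in substance and follows essentially the same strategy as the paper: solve the piecewise-constant ODE for $\r$ on each half-line, match the branches using continuity and the $C^1$-jump relation, shoot for $N$, and derive the decay estimate via the linear structure of the $N$-equation. There are, however, a few places where your route diverges from the paper's and ends up more complicated. First, the paper establishes $\s>\chi$ \emph{at the outset} by integrating Equation (\ref{waverho}) over the whole line, which yields $(\s-\chi)\,\r^\s(-\infty)=\int_{\R_+}\r^\s>0$; this immediately kills the non-constant left branch for \emph{every} $\chi$ and avoids the awkward $\chi>2$ sub-case you had to treat separately. Your separate treatment of $\chi>2$ does reach the correct conclusion (no nontrivial nonnegative profile exists for $2\le\s<\chi$), but the intermediate inequality you state there, that $A\geq0$ and $a\geq0$ ``force $\mu_+(\s)\geq\s$'', is not what the matching conditions actually yield; the cleaner way to close that case is to note that $\mu_+A+\mu_-B=(\s-\chi)a\le0$ together with $A\ge0$ and $A+B\ge0$ forces $A=B=0$. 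Second, you determine the normalization constant by shooting $N$ forward from $z=0$ (where $N(0)=N_\thresh$ is pinned by the explicit exponential on $z\le 0$) toward $+\infty$, and then tuning $\r_0$ so that $L(\r_0)=1$; the paper instead fixes $N(+\infty)=1$ and shoots backward, tuning $A^-$ so that $N^\s_{A^-}(0)=N_\thresh$. Both are intermediate-value arguments, and in both cases the continuity/monotonicity of the shooting map is asserted rather than proved in detail, so this is not a defect of your proof relative to the paper's; your maximum-principle observation that $N$ stays increasing on $z>0$ is a nice addition. Finally, for (\ref{NasymptoticProperty}) the paper integrates (\ref{diffusivewave:b}) once over $(z,+\infty)$ to get a first-order equation for $N-1$ and then uses an integrating factor, while you invoke variation of parameters directly; these are equivalent, and your handling of the $z\le0$ range by noting the estimate is trivially satisfied there is fine.
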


\begin{proof}
Integrating Equation (\ref{waverho}) over the whole line yields $(\s-\chi )\r^\s(-\infty)=\int_{\R_+} \r^\s(z)dz$ (as we will see just below, $\r^\s$ is integrable at $z=+\infty$). Therefore by nonnegativity of the left handside, we find that $\s>\chi$.
Consider Equation (\ref{waverho}) for $z<0$. Its characteristic polynomial is $ X^2+(\s-\chi)X$ and has roots $0$ and ${\chi-\s}$. There exist two constants $A^-,B^-\in\R$, such that $\r^\s(z)=A^-+B^-e^{{(\chi -\s)}z}$, for $z<0$.  Since $\s>\chi$, the term $e^{{(\chi -\s)}z}$ is unbounded on $\R_-$, which leads to $B^-=0$.  \\
Consider Equation (\ref{waverho}) for $z>0$. Its characteristic polynomial is $ X^2+\s X+1$ and its discriminant is $\s^2-4$. If the discriminant is negative, the roots are complex and $\r^\s$ would be a linear combination of two oscillating functions, which is prohibited by the nonnegativity condition. Hence $\s^2\geq 4$, or, by positivity of $\s$, $\s\geq 2$. \\
Suppose $\s>2=\s_\text{F/KPP}$, the roots of the characteristic polynomial are then $-\mu_\pm(\s)$ and there exist two constants $A^+,B^+\in \R$ such that $\r^\s(z)=A^+e^{-\mu_+(\s)z} + B^+ e^{-\mu_-(\s)z} $. By the continuity at $z=0$ of $\r^\s$, we obtain equality $A^-=A^++B^+$ and by the $C^1$-jump relation (\ref{C1discRel}), we obtain equality $-\mu_+A^+-\mu_-B^+=-\chi A^-$. Thus we find that $\r^\s(z)=\frac{A^-}{\sqrt{\s^2-4}}\left( ( \mu_+(\s)-\chi ) e^{-\mu_-(\s)z}+(\chi-\mu_-(\s))e^{-\mu_+(\s)z} \right) $. One checks that this expression is nonegative for all $z$, if and only if, $  \mu_+(\s)\geq \chi  $. In the case of small bias $\chi\leq 1$, this inequality is always verified. In the case of large bias $\chi>1$, this inequality is verified, if and only if  $\s\geq \chi+\frac{1}{\chi}$. This proves all cases of Theorem for $\s>\s_\text{F/KPP}$. \\
Suppose $\s=2=\s_\text{F/KPP}$, then there exist two constants $A^+,B^+\in \R$ such that $\r^\s(z)=(A^+z+B^+)e^{-z } $. By the same arguments as above, we have $B^+=A^-$, $A^+=({1-\chi })A^-$ and this leads to $\r^\s(z)=A^-\left((1-\chi)z+1\right) e^{-z } $. To satisfy the nonnegativity condition, we must have $\chi\leq 1 $, which shows that $\s=2=\s_\text{F/KPP}$, is the speed of a traveling wave, if and only if $\chi \leq 1 $.

The behavior of $\r^\s$ for $z\geq 0$ is simply a reformulation of the considerations above. In particular, note that in the case of large bias 
and $\s=\s^*$, we have that $ \mu_+({\s^*})=\chi$, which leads to $\r^{{\s^*}}(z)=A^-e^{-\chi z}$, for $z\geq 0$.

It remains to show that for every wave profile $\r^\s$, we have a corresponding wave profile $N^\s$, which satisfies the condition $N^\s(0)=N_\thresh$. To do so, notice that by linearity of the equation on $\r$, the profile $\r^\s$ is defined up to the multiplicative constant $A^-$, that is yet to be determined. Denote for a given constant $A^->0$, $\r^\s_{A^-}$ its corresponding profile. Equation (\ref{diffusivewave:b}) with boundary condition $N(+\infty)=1$, then has a unique solution, which we denote $N^\s_{A^-}$. But it is clear that the map $A^-\mapsto N^\s_{A^-}(0)$ is continuous and decreasing and its range is $(0,1)$. Therefore there exists a unique $A^-$, such that $N^\s_{A^-}(0)=N_\thresh$ and we obtain for a fixed $\s\geq \s^*$ a unique traveling wave profile  $(\r^\s_{A^-},N^\s_{A^-})$. 

Let us finally prove Estimate (\ref{NasymptoticProperty}). By integrating Equation (\ref{diffusivewave:b}) on the interval $(z,+\infty)$ and noticing that $N(+\infty)=1, N'(+\infty)=0$:
\begin{align*}
\s \left( N(z)-1 \right) + D \left( N(z)-1 \right)' = -  \int_z^{+\infty}\r(y)N(y)dy.
\end{align*}
Hence for a constant $C>0$:
\begin{align*}
\left|\left( e^{\frac{\s}{D}z}(N(z)-1) \right)'\right|\leq C e^{\left(\frac{\s}{D}-\mu\right)z}.
\end{align*}
By integrating this inequality on the interval $(0,z)$ for another $C>0$, we find:
\begin{align*}
\left|N(z)-1\right| \leq C \left(e^{-\frac{\s}{D}z}+e^{-\mu z} \right).
\end{align*}
\end{proof}

Theorem \ref{thmparwave} shows that there exist a large number of traveling waves. However, in Section \ref{Sect-Asympt} we will show that in the circumstances of a biologically relevant initial conditions, the interesting traveling wave will be that of minimal speed $\s^*$.

\section{Inside Dynamics of Traveling Waves}
\label{Sect-Inside}

We elaborate on the properties of the traveling waves, following the lines of \cite{garnier2012}, as well as \cite{roques2012} to a small extent. We establish that in the large bias case, the wave is \textit{pushed}, and in the small bias case (or when $\s>\s^*$), the wave is \textit{pulled}, according to the definition proposed in \cite{garnier2012} (see the discussion in the Introduction on the ambiguity of \textit{pushed} and \textit{pulled} waves). To do so, we are using the formalism of neutral fractions. The aim is to study the behaviour of partitions of the traveling wave profile (see \cite{garnier2012} and  \cite{cochet2021} for the biological relevance of this decomposition).

\begin{defi}
Define $L:=- \dzz - \beta \dz $, where $\beta(z)=\sigma-\chi \mathbbm{1}_{z\leq 0}+2\frac{\dz \r^\sigma}{\r^\sigma}$. A neutral fraction $\nu$ (of the traveling wave $\r^\s$) is a solution to the following equation:
\begin{align}
\label{neutralfraction}
\left\{\begin{array}{l}
\dt \nu +L\nu=0 \\
\nu(0,\cdot)=\nu^0
\end{array}\right. .
\end{align}
\end{defi}
It is clear that any constant is a neutral fraction, as stationary solutions to Equation (\ref{neutralfraction}). The interest in Equation (\ref{neutralfraction}) stems from the following observation. Suppose we have neutral fractions $(\nu_i)_{i=1}^k\geq 0$ that satisfy $\sum_{i=1}^k \nu^0_i = 1$. It amounts to marking each part $\nu^0_i \r^\s$ of the population with neutral labels, \textit{i.e.} that do not interfer with the dynamics. The neutral fractions $(\nu_i(t)\r^\s)_{i=1}^k$ then describe the evolution over time of the distribution of these labels. Because of this interpretation, it is natural to suppose that $\nu^0$ takes its values in $[0,1]$, but such a restriction is of no relevance for the subsequent analysis.
Of note, describing $\nu_i$ or $\nu_i \r^\sigma$ is equivalent, only the expression of the operator $L$ changes. 

\subsection{Pushed Front Dynamics in the Large Bias Case}
\label{subsection-pushed}

In this Section, we develop arguments very similar to the ones developped in the works \cite{gallay2005} and \cite{garnier2012} (see also \cite{roques2012}). The evolution of neutral fractions is characterized in the regime of large bias ($\chi>1$) and minimal velocity ($\s=\s^*$) by Theorem \ref{thmpushed}. 

Consider the operator $L$ in the space $L^2(e^V dz)$, where $V'=\beta$. On the appropriate domain, $L$ is self-adjoint, has $0$ as eigenvalue and an exact spectral gap $\gamma:=\frac{1}{4}\min \left(\s^2-4,\frac{1}{\chi ^2}\right)>0$. This leads to the following convergence result:

\begin{thm}
\label{thmpushed}
Suppose that $\chi>1$ and that $\s=\s^*$. Let $\nu$ be a neutral fraction (\ref{neutralfraction}) that satisfies $\nu^0 \in L^2(e^V dz)$. Then we have the following convergence result:
\begin{align}
\label{expdecay}
\norm{\nu(t)-\langle \nu ^0\rangle}_{L^2(e^Vdz)} \leq \norm{\nu^0}_{L^2(e^V dz)} e^{-\gamma t},
\end{align}
with $\langle \nu^0\rangle=\frac{\int \nu^0  e^V dz}{\int  e^V dz}$ and $\gamma:=\frac{1}{4}\min \left(\s^2-4,\frac{1}{\chi ^2}\right)>0$. 

Furthermore for a constant $C>0$, we have:
\begin{align}
\label{expdecay-infty}
\norm{(\nu(t)-\langle \nu^0\rangle) e^{\frac{V}{2}}}_\infty \leq C\left(1+t^{-\frac{1}{2}}\right)e^{-\gamma t}.
\end{align}
And as a consequence $\nu$ converges exponentially fast to a constant on compact sets $K\subset\R$, \textit{i.e.}:
\begin{align}
\sup_{z\in K}|\nu(t,z)-\langle \nu^0\rangle|\leq\frac{C\left(1+t^{-\frac{1}{2}}\right)e^{-\gamma t}}{ \inf_K e^{\frac{V}{2}}}.
\end{align}
\end{thm}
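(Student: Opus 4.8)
The plan is to conjugate $L$ to a one‑dimensional Schrödinger operator, establish the announced spectral gap by explicit computation at the interface, and then deduce both convergence statements by functional calculus. First I would make the weighted space explicit. In the regime $\chi>1$, $\s=\s^*$ the profile of Theorem~\ref{thmparwave} is constant on $\{z\le0\}$ and equals $A^{\s^*}e^{-\chi z}$ on $\{z>0\}$, so $\beta\equiv\frac1\chi$ on $\{z<0\}$ and $\beta\equiv\frac1\chi-\chi$ on $\{z>0\}$; normalizing $V(0)=0$ gives $V(z)=z/\chi$ for $z\le0$ and $V(z)=(\frac1\chi-\chi)z$ for $z>0$, hence $e^V\in L^1(\R)$ because $\chi>1$, the constants lie in $L^2(e^Vdz)$, and $Lu=-e^{-V}(e^Vu')'$ is formally symmetric there. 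I would fix the self‑adjoint realization through the closed nonnegative quadratic form $u\mapsto\int_\R|u'|^2e^Vdz$; this is the subtle point, since the jump of $\beta$ at $z=0$ — equivalently the $C^1$-jump of $\r^{\s^*}$ — must be encoded as an interior transmission condition. With this choice $L\ge0$ and $L\cdot1=0$.

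Next I would use the unitary $u\mapsto\phi:=e^{V/2}u$ from $L^2(e^Vdz)$ onto $L^2(dz)$, which turns $L$ into $H=-\dzz+Q$ with $Q=\frac{\beta^2}{4}+\frac{\beta'}{2}$: explicitly $Q=\frac1{4\chi^2}$ on $\{z<0\}$, $Q=\frac14(\chi-\frac1\chi)^2=\frac14(\s^2-4)$ on $\{z>0\}$, plus the attractive Dirac potential $\frac{\beta'}{2}=-\frac\chi2\delta_0$. Since $Q$ is asymptotically constant, Weyl's theorem gives that the essential spectrum of $H$ is $[\gamma,\infty)$ with $\gamma=\min(Q(-\infty),Q(+\infty))=\frac14\min(\s^2-4,\frac1{\chi^2})$, so below $\gamma$ only eigenvalues may occur. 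The function $\phi_0:=e^{V/2}>0$ satisfies $H\phi_0=0$ (this is just $L\cdot1=0$; one checks the matching $\phi_0'(0^+)-\phi_0'(0^-)=-\frac\chi2\phi_0(0)$), hence by positivity $0$ is the bottom of the spectrum and is simple. To exclude eigenvalues in $(0,\gamma)$ I would solve $H\phi=\lambda\phi$ directly: a decaying solution must be $ae^{\kappa_-(\lambda)z}$ on $\{z<0\}$ and $ae^{-\kappa_+(\lambda)z}$ on $\{z>0\}$ with $\kappa_\pm(\lambda)=\sqrt{Q(\pm\infty)-\lambda}$, and the Dirac matching forces $\kappa_-(\lambda)+\kappa_+(\lambda)=\frac\chi2$; the left‑hand side equals $\frac\chi2$ at $\lambda=0$ and is strictly decreasing in $\lambda$, so no such $\lambda\in(0,\gamma)$ exists. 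This yields the spectral gap.

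The convergence is then functional calculus. Writing $\nu(t)=e^{-tL}\nu^0$ and splitting $\nu^0=\langle\nu^0\rangle\cdot1+\nu^0_\perp$ orthogonally in $L^2(e^Vdz)$, the semigroup fixes the constant part and $L\ge\gamma$ on $\{1\}^\perp$, so $\norm{\nu(t)-\langle\nu^0\rangle}_{L^2(e^Vdz)}=\norm{e^{-tL}\nu^0_\perp}_{L^2(e^Vdz)}\le e^{-\gamma t}\norm{\nu^0_\perp}_{L^2(e^Vdz)}\le e^{-\gamma t}\norm{\nu^0}_{L^2(e^Vdz)}$, which is (\ref{expdecay}). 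For (\ref{expdecay-infty}) set $\tilde w(t):=e^{V/2}(\nu(t)-\langle\nu^0\rangle)=e^{-tH}\tilde w^0$ with $\tilde w^0\perp\phi_0$, so its spectral measure lives in $[\gamma,\infty)$; combining $\sup_{\lambda\ge\gamma}\lambda^{1/2}e^{-\lambda t}\le C(1+t^{-1/2})e^{-\gamma t}$ with the form bound $\norm{\dz f}_2^2\le2\langle Hf,f\rangle+C\norm{f}_2^2$ (drop the nonnegative $\int\frac{\beta^2}{4}f^2$ and absorb the one‑dimensional trace term $\frac\chi2f(0)^2$ into $\norm{\dz f}_2^2$) gives $\norm{\dz\tilde w(t)}_2\le C(1+t^{-1/2})e^{-\gamma t}\norm{\tilde w^0}_2$, and the Sobolev inequality $\norm{g}_\infty^2\le2\norm{g}_2\norm{\dz g}_2$ yields (\ref{expdecay-infty}) after noting $\norm{\tilde w^0}_2\le\norm{\nu^0}_{L^2(e^Vdz)}$. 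The final display is then immediate because $\inf_Ke^{V/2}>0$ on any compact $K$.

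The main obstacle I anticipate lies in the spectral analysis at the interface $z=0$: one must choose the operator domain so that the discontinuity of $\beta$ becomes exactly the attractive Dirac perturbation $-\frac\chi2\delta_0$ (form methods handle this cleanly, but it is where care is needed), and then one must genuinely rule out a second bound state in $(0,\gamma)$ — a priori a shallow enough well could support one — which is precisely the role of the explicit matching $\kappa_-(\lambda)+\kappa_+(\lambda)=\frac\chi2$. Upgrading the $L^2$ decay to weighted $L^\infty$ with the sharp rate $e^{-\gamma t}$ is a secondary point, which I would handle by the spectral‑calculus and Sobolev argument above rather than by heat‑kernel estimates, these being awkward because of the Dirac mass.
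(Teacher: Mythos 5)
Your proposal follows essentially the same route as the paper: conjugate $L$ by $e^{V/2}$ to a Schrödinger operator $-\dzz+\frac{\beta^2}{4}+\frac{\beta'}{2}$ with an attractive Dirac at $z=0$, identify $\gamma=\frac14\min(\s^2-4,\chi^{-2})$ as the bottom of the essential spectrum, rule out eigenvalues in $(0,\gamma)$ via the matching relation $\kappa_-(\lambda)+\kappa_+(\lambda)=\chi/2$ (which is exactly the paper's $\chi=\sqrt{\s^2-4(1+\lambda)}+\sqrt{\chi^{-2}-4\lambda}$), and then obtain (\ref{expdecay}) from the spectral decomposition and (\ref{expdecay-infty}) from a form/Sobolev estimate. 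The only deviations are cosmetic: you set up self-adjointness by the quadratic-form route and invoke the textbook Weyl theorem for asymptotically constant potentials, where the paper proves closedness and self-adjointness directly and builds explicit Weyl sequences, and you phrase the $H^1$ control via $\sup_{\lambda\ge\gamma}\lambda^{1/2}e^{-\lambda t}$ where the paper uses the self-adjoint Hille--Yosida bounds — these are interchangeable.
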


Theorem \ref{thmpushed} states that every neutral fraction converges to a constant. In other words, independently of the initial datum for the neutral fraction, the neutral fraction will after some time, uniformly on compact sets in space represent a multiple (which does depend on the initial datum) of the total population. This is in stark contrast with the case when $\s=\s_{F/KPP}$ (or when $\s>\s^*$) in Corollary \ref{cor-pulled}, where neutral fractions will in general go extinct (unless they are part of the leading edge). In fact, in the large bias case when $\s=\s^*$, for $z<0, \beta(z)>0$ and for $z>0,\beta(z)<0$, which shows that $1 \in L^2(e^Vdz)$. But this property is not true in the small bias case when $\s=\s_{\text{F/KPP}}$ (or when $\s>\s^*$), as we will see in Subsection \ref{subsection-pulled}.

We define $L$ on $L^2(e^Vdz)$ with domain $D(L)=H^2(e^V dz)$. Nevertheless to simplify the spectral study of $L$, we introduce the pullback $\mathcal L$ of $L$ to the space $L^2(dz)$, that is $\mathcal{L} =e^{\frac{V}{2}}L\left(e^{-\frac{V}{2}}\cdot \right)$, with domain $D(\mathcal L)=\left\{ f \in H^1( dz) \middle| f'-\frac{\beta}{2} f \in H^1(dz)\right\}$. $\mathcal L$ is symmetric and monotone, as for $f,g\in D(\mathcal L)$,
\begin{align}
&\langle f, \mathcal L g \rangle \notag \\
=&\int_\R e^{\frac{V}{2}}f\left(- \left(e^{-\frac{V}{2}} g \right)'' -\beta \left(e^{-\frac{V}{2}} g \right)' \right)dz \notag \\
=&\int_\R \left( \left(e^{\frac{V}{2}} f \right)' \left(e^{-\frac{V}{2}} g \right)' -\beta e^{\frac{V}{2}}f \left(e^{-\frac{V}{2}} g \right)' \right) dz \notag  \\
=& \int_\R \left( \left( f' +\frac{\beta}{2}f \right) \left( g' -\frac{\beta}{2}g  \right) -\beta f \left(g' -\frac{\beta}{2}g \right) \right) dz \notag  \\
=& \int_\R  \left( f' -\frac{\beta}{2}f \right) \left( g' -\frac{\beta}{2}g  \right)  dz 
\label{L-fg}  \\
=&\langle\mathcal L f,  g \rangle\notag,
\end{align}
where $\langle f | g \rangle:=\int_\R fg dz$. In fact Equality (\ref{L-fg}) holds also true if $f\in D(\mathcal L)$ and $g\in H^1(dz)$. By observing that $D(\mathcal{L})\subset H^1(dz) $ and density of $D(\mathcal{L})$ in $ H^1(dz) $, we obtain that for every $f\in H^1(dz)$:
\begin{align}
\label{L-dissipativity}
\langle f, \mathcal L f \rangle = \int_\R  \left( f' -\frac{\beta}{2}f \right)^2   dz \geq 0.
\end{align}
Finally $\mathcal L$ has the following expressions:
\begin{align}
&\mathcal L f \nonumber \\
=& -f'' +\frac{\beta^2}{4}f +\frac{\beta'}{2}f
\label{L-expression-beta}\\
=& -\left(e^{\frac{V}{2}} \left( e^{-\frac{V}{2}}f\right)' \right)'
\label{L-expression-exp}.
\end{align}

\begin{prop}
The operator $\mathcal L: D(\mathcal L) \to L^2( dz)$   is closed.
\end{prop}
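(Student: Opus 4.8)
The plan is to establish closedness directly, exploiting the factorized structure of $\mathcal L$ already visible in (\ref{L-dissipativity}): with $Tf:=f'-\tfrac{\beta}{2}f$ on the domain $H^1(dz)$ one has $\mathcal L=T^{*}T$, and the assertion is the familiar fact that $T^{*}T$ is closed (indeed self-adjoint, by von Neumann's theorem) as soon as $T$ is densely defined and closed. I will give a self-contained weak-compactness proof rather than quote that theorem. Throughout I use that $\beta$ is bounded --- clear from the profiles $\r^\s$ of Theorem \ref{thmparwave} (in the regime $\chi>1$, $\s=\s^*$ it is even piecewise constant, equal to $\tfrac1\chi$ on $z<0$ and $\tfrac1\chi-\chi$ on $z>0$) --- so that multiplication by $\beta$ is a bounded operator on $L^2(dz)$ and preserves both strong and weak convergence.

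Let $(f_n)\subset D(\mathcal L)$ with $f_n\to f$ and $\mathcal L f_n\to g$ in $L^2(dz)$; the aim is $f\in D(\mathcal L)$ and $\mathcal L f=g$. Put $\psi_n:=f_n'-\tfrac{\beta}{2}f_n$, which lies in $H^1(dz)$ by the very definition of $D(\mathcal L)$. First I would extract uniform bounds. From (\ref{L-dissipativity}) and Cauchy--Schwarz, $\norm{\psi_n}_{L^2}^2=\langle f_n,\mathcal L f_n\rangle\leq\norm{f_n}_{L^2}\norm{\mathcal L f_n}_{L^2}$, which is bounded because convergent sequences are bounded; hence $(\psi_n)$ is bounded in $L^2(dz)$, and then $(f_n)$ is bounded in $H^1(dz)$ since $f_n'=\psi_n+\tfrac{\beta}{2}f_n$. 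Integrating by parts in (\ref{L-fg}) gives the identity $\mathcal L f_n=-\psi_n'-\tfrac{\beta}{2}\psi_n$ in $L^2(dz)$, so $\psi_n'=-\mathcal L f_n-\tfrac{\beta}{2}\psi_n$ is bounded in $L^2(dz)$ as well, and $(\psi_n)$ is bounded in $H^1(dz)$.

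Next I would pass to the limit. Along a subsequence, $\psi_n\rightharpoonup\psi$ weakly in $H^1(dz)$, hence $\psi_n\rightharpoonup\psi$ and $\psi_n'\rightharpoonup\psi'$ weakly in $L^2(dz)$. Since $f_n\to f$ strongly in $L^2(dz)$ we also have $\tfrac{\beta}{2}f_n\to\tfrac{\beta}{2}f$ strongly, so $f_n'=\psi_n+\tfrac{\beta}{2}f_n\rightharpoonup\psi+\tfrac{\beta}{2}f$ weakly in $L^2(dz)$; together with $f_n\to f$ this forces $f\in H^1(dz)$ with $f'-\tfrac{\beta}{2}f=\psi\in H^1(dz)$, i.e.\ $f\in D(\mathcal L)$. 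Finally, from $\mathcal L f_n=-\psi_n'-\tfrac{\beta}{2}\psi_n$ and the weak convergences $\psi_n'\rightharpoonup\psi'$ and $\tfrac{\beta}{2}\psi_n\rightharpoonup\tfrac{\beta}{2}\psi$ in $L^2(dz)$, we get $\mathcal L f_n\rightharpoonup-\psi'-\tfrac{\beta}{2}\psi=\mathcal L f$; but $\mathcal L f_n\to g$ strongly, hence weakly, so $g=\mathcal L f$. As $f$ and $g$ are fixed by the full sequences, the subsequence extraction is harmless, and $\mathcal L$ is closed.

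The only real difficulty is handling the singularity at $z=0$ with the right bookkeeping: $\beta$ jumps there, so the Schrödinger form (\ref{L-expression-beta}) carries a Dirac term and $f_n'$ is itself discontinuous at $0$. The argument is arranged so as never to differentiate $f_n$ twice nor to touch $\beta'$, working instead with $\psi_n=f_n'-\tfrac{\beta}{2}f_n$, which is continuous and belongs to $H^1(dz)$ precisely because the jumps of $f_n'$ and of $\tfrac{\beta}{2}f_n$ cancel at $z=0$. A second, milder point is that $\R$ is unbounded, so there is no global compact Sobolev embedding --- but every passage to the limit above is in the weak $L^2(dz)$ or weak $H^1(dz)$ topology, and boundedness of $\beta$ is exactly what lets these weak limits commute with multiplication by $\beta$.
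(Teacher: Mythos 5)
Your proof is correct and takes a genuinely different, and in fact leaner, route than the paper's. You work directly with $\psi_n:=f_n'-\tfrac{\beta}{2}f_n$, obtain a bound for $\psi_n$ in $H^1(dz)$ from the identity $\mathcal L f_n=-\psi_n'-\tfrac{\beta}{2}\psi_n$ (which is the correct realization of (\ref{L-expression-exp}); note the paper's displayed form there has a typo and should read $\mathcal L f=-e^{-V/2}\bigl(e^{V}(e^{-V/2}f)'\bigr)'$, but your version agrees with (\ref{L-expression-beta})), and then everything closes with \emph{weak} compactness alone. The paper, by contrast, first proves strong $H^1$-convergence of $(f_n)$ (its Step 3), needs that strong convergence to identify the $L^2$-limit of $(f_n'-\tfrac{\beta}{2}f_n)'$ (Step 4), and finally invokes density of $D(\mathcal L^*)$ and symmetry to conclude $\mathcal L f=g$ (Step 5). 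You avoid both the strong-convergence detour and the excursion through $D(\mathcal L^*)$ by passing to the weak $L^2$-limit directly in $\mathcal L f_n=-\psi_n'-\tfrac{\beta}{2}\psi_n$, using only that multiplication by the bounded function $\beta$ is weakly continuous. Your opening observation that $\mathcal L=T^*T$ with $T=\partial_z-\tfrac{\beta}{2}$ on $H^1(dz)$ (a closed, densely defined operator) would also give self-adjointness at once via von Neumann's theorem, which the paper proves separately in the next proposition; that shortcut is worth flagging explicitly, since it subsumes both propositions in one line. The only places that deserve a half-sentence more care: verifying that $T$ is indeed closed (immediate, since $f_n\to f$, $Tf_n\to h$ in $L^2$ forces $f_n'\to h+\tfrac{\beta}{2}f$ in $L^2$), and a passing remark that the subsequence extraction is harmless because the putative limit $(f,g)$ is already fixed --- you noted this, and it is exactly right.
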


\begin{proof}

Let $(f_n, \mathcal L f_n) \in \Gamma(\mathcal L)$ such that $(f_n,\mathcal L f_n) \xrightarrow[n\to+\infty]{L^2( dz)} (f,g) \in L^2( dz) \times L^2( dz)$. We will show that $f\in D(\mathcal L)$ and $\mathcal L f=g$.

1. First, we prove boundedness of $(f_n)$ in $H^1( dz)$:
\begin{align*}
\int_\R \left(  f_n'-\frac{\beta}{2}f_n \right)^2  dz = \langle f_n | \mathcal L f_n \rangle \leq \norm{f_n}_2 \norm{\mathcal L f_n}_2.
\end{align*}
The right handside is bounded and thus $f_n'-\frac{\beta}{2}f_n$ is bounded in $L^2(dz)$. But $\frac{\beta}{2}f_n$ is also bounded in $L^2(dz)$ (since $\beta \in L^\infty(\R)$). Hence $(f_n)$ is bounded in $H^1( dz)$

2. The boundedness of $(f_n)$ in $H^1( dz)$ implies weak compactness of the sequences. Hence up to extraction of a subsequence, we can assume that $f_n \xrightharpoonup[n\to+\infty]{H^1( dz)} \ell \in H^1( dz) $. But uniqueness of the limits in $L^2( dz)$ implies $\ell = f$, which leads to $f \in H^1( dz) $.

3. We now show strong convergence in $H^1( dz)$. Denote $r_n:=f-f_n$
\begin{align*}
&\int_\R \left(r_ n'-\frac{\beta}{2}r_n \right)^2dz \\
=& \left\langle f'-\frac{\beta}{2}f \middle| r_n'-\frac{\beta}{2}r_n \right\rangle -\left\langle f_n'-\frac{\beta}{2}f_n \middle| r_n'-\frac{\beta}{2}r_n \right\rangle\\
=& \underbrace{\left\langle f'-\frac{\beta}{2}f \middle| r_n'-\frac{\beta}{2}r_n \right\rangle}_{\to 0 \text{ by weak convergence in }H^1(dz)} -\underbrace{\left\langle \mathcal L f_n \middle| r_n \right\rangle}_{\to 0 \text{ since }r_n\to 0\text{ in } L^2(dz) \text{ and }\mathcal L f_n\text{ is bounded} },
\end{align*}
where we have applied (\ref{L-fg}) with $f_n\in D(\mathcal{L})$ and $r_n\in H^1(dz)$. Thus $f_n \xrightarrow[n\to+\infty]{H^1( dz)} f$.

4. We show that $f \in D(\mathcal L)$:
\begin{align*}
 \left(f_n'-\frac{\beta}{2}f_n\right)' = -  \mathcal L f_n -\frac{\beta}{2}f_n'-\frac{\beta^2}{4}f_n .
\end{align*}
The right handside converges in $L^2(dz)$, which establishes that $f_n'-\frac{\beta}{2}f_n \xrightarrow[n\to+\infty]{H^1( dz)} f'-\frac{\beta}{2}f \in H^1(dz)$. Therefore $f\in D(\mathcal L)$.

5. Finally we show that $\mathcal L f=g$. Let $h \in D(\mathcal L^*)$. By definition of $D(\mathcal L^*)$, we have that $\langle h | \mathcal L f_n \rangle \xrightarrow[n\to+\infty]{} \langle h | \mathcal L f \rangle $. But we also know that $\langle h | \mathcal L f_n \rangle \xrightarrow[n\to+\infty]{} \langle h | g \rangle $. Therefore $\mathcal L f - g \in D(\mathcal L^*)^\bot$. But since $\mathcal L$ is a symmetric operator, we have the inclusion $D(\mathcal L)\subset D(\mathcal L^*)$ and thus $D(\mathcal L^*)$ is dense. Hence $ \mathcal L f = g$.

Thus, the operator $\mathcal L$ is closed.
\end{proof}

\begin{prop}
The operator $\mathcal L: D(\mathcal L) \to L^2( dz)$ is self-adjoint.
\end{prop}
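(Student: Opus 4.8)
The plan is to leverage what has just been proved --- that $\mathcal L$ is densely defined, symmetric and closed --- and to add the single missing ingredient, namely that $\mathrm{Ran}(\mathcal L+\lambda)=L^2(dz)$ for some fixed $\lambda>0$. That suffices: since $\mathcal L$ is densely defined, $\ker(\mathcal L^*+\lambda)=\mathrm{Ran}(\mathcal L+\lambda)^\perp=\{0\}$, so for any $g\in D(\mathcal L^*)$ one may choose (by surjectivity) $f\in D(\mathcal L)$ with $(\mathcal L+\lambda)f=(\mathcal L^*+\lambda)g$; since $\mathcal L\subset\mathcal L^*$ this forces $(\mathcal L^*+\lambda)(g-f)=0$, hence $g=f\in D(\mathcal L)$. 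Thus $D(\mathcal L^*)\subset D(\mathcal L)$, while the reverse inclusion is symmetry, so $\mathcal L=\mathcal L^*$.

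To produce the range condition I would run a Lax--Milgram argument on the sesquilinear form naturally attached to $\mathcal L$. Note first that $\beta$ is bounded --- in the regime $\chi>1$, $\s=\s^*$ under consideration it is even piecewise constant, by Theorem \ref{thmparwave}. Set $B_0(f,g):=\int_\R\bigl(\dz f-\tfrac{\beta}{2}f\bigr)\bigl(\dz g-\tfrac{\beta}{2}g\bigr)\,dz$ (the right-hand side of (\ref{L-fg})) and $B_\lambda(f,g):=B_0(f,g)+\lambda\langle f,g\rangle$, both on $H^1(dz)\times H^1(dz)$. Boundedness of $B_\lambda$ on $H^1(dz)$ follows from $\beta\in L^\infty(\R)$, and coercivity from the elementary inequality $(a+b)^2\geq\tfrac12a^2-b^2$ applied with $a=\dz f$, $b=-\tfrac{\beta}{2}f$, which gives $B_\lambda(f,f)\geq\tfrac12\norm{\dz f}_2^2+\bigl(\lambda-\tfrac14\norm{\beta}_\infty^2\bigr)\norm{f}_2^2$; one then simply fixes $\lambda>\tfrac14\norm{\beta}_\infty^2$. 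Lax--Milgram yields, for each $h\in L^2(dz)$, a $g\in H^1(dz)$ with $B_\lambda(g,\varphi)=\langle h,\varphi\rangle$ for every $\varphi\in H^1(dz)$.

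The last step is to recognise this weak solution as an element of $D(\mathcal L)$ solving $(\mathcal L+\lambda)g=h$. Testing against $\varphi\in C^\infty_c(\R)$ and integrating by parts in the term $\int(\dz g-\tfrac{\beta}{2}g)\dz\varphi$ identifies the distributional derivative
\begin{align*}
\dz\bigl(\dz g-\tfrac{\beta}{2}g\bigr)=-\tfrac{\beta}{2}\bigl(\dz g-\tfrac{\beta}{2}g\bigr)+\lambda g-h\in L^2(dz).
\end{align*}
Hence $\dz g-\tfrac{\beta}{2}g\in H^1(dz)$, i.e. $g\in D(\mathcal L)$, the $C^1$-jump at $z=0$ forced by the discontinuity of $\beta$ being automatically encoded in this membership. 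Since (\ref{L-fg}) gives, for $g\in D(\mathcal L)$, the pointwise expression $\mathcal L g=-\dz(\dz g-\tfrac{\beta}{2}g)-\tfrac{\beta}{2}(\dz g-\tfrac{\beta}{2}g)$ (the divergence-form reading of (\ref{L-expression-beta})), substituting the displayed identity yields $\mathcal L g=h-\lambda g$, so $\mathrm{Ran}(\mathcal L+\lambda)=L^2(dz)$.

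Given that closedness of $\mathcal L$ is already available, the argument is essentially routine; the only points requiring a little care are the coercivity of $B_\lambda$ (handled by the shift $\lambda$) and the identification of the Lax--Milgram solution with an element of $D(\mathcal L)$ in the presence of the discontinuous coefficient $\beta$ --- but this last point needs only a single distributional differentiation, precisely because $D(\mathcal L)$ was set up to be exactly the form domain in which $\dz g-\tfrac{\beta}{2}g\in H^1(dz)$. An essentially equivalent alternative would be to invoke the first representation theorem for closed nonnegative symmetric forms applied to $B_0$ with form domain $H^1(dz)$: this form is densely defined, symmetric, nonnegative, and closed --- the last property being, through the equivalence of $B_0(\cdot,\cdot)+\norm{\cdot}_2^2$ with the $H^1(dz)$-norm, a reformulation of the closedness of $\mathcal L$ just established --- and the self-adjoint operator it represents is then checked to have domain $D(\mathcal L)$ and to coincide with $\mathcal L$.
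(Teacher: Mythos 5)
Your argument is correct, but it takes a genuinely different route from the paper's. The paper proves $D(\mathcal L^*)\subset D(\mathcal L)$ directly: given $g\in D(\mathcal L^*)$, it first uses symmetry and density of $C_0^\infty(\R)$ to promote the distribution $\mathcal L g$ to an element of $L^2(dz)$, and then unwinds the divergence-form rewriting of $\mathcal L$ to conclude $\dz g-\tfrac{\beta}{2}g\in H^1(dz)$, i.e.\ $g\in D(\mathcal L)$. You instead invoke the classical range criterion --- a densely defined, symmetric operator is self-adjoint once $\mathrm{Ran}(\mathcal L+\lambda)=L^2(dz)$ for some $\lambda>0$ --- and furnish the surjectivity via Lax--Milgram on the form $B_\lambda$ built from (\ref{L-fg}), followed by a distributional identification of the weak solution as a member of $D(\mathcal L)$ solving $(\mathcal L+\lambda)g=h$. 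Your reduction of the abstract criterion to the kernel of $\mathcal L^*+\lambda$ is clean, the coercivity computation $(a+b)^2\geq\tfrac12 a^2-b^2$ is elementary and exactly what is needed, and the final distributional differentiation correctly lands in the definition of $D(\mathcal L)$, so the proof closes. What your route buys is that it is the standard form-method machinery (as you observe, the representation theorem for nonnegative closed forms, with form domain $H^1(dz)$ and closedness supplied by the previous proposition), and it exhibits the resolvent $(\mathcal L+\lambda)^{-1}$ explicitly --- useful downstream. What the paper's route buys is brevity: no auxiliary form $B_\lambda$, no shift parameter, just distributional bookkeeping on $D(\mathcal L^*)$. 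One small remark on your identification step: the pointwise expression you use, $\mathcal L g=-\dz(\dz g-\tfrac{\beta}{2}g)-\tfrac{\beta}{2}(\dz g-\tfrac{\beta}{2}g)$, is the correct divergence-form reading of (\ref{L-expression-beta}) and is consistent with your weak identity; you were right to expand it from (\ref{L-expression-beta}) rather than relying literally on the display (\ref{L-expression-exp}), whose written form drops a lower-order term.
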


\begin{proof}
We already know that the operator $\mathcal L$ is symmetric. It remains to show that it shares the same domain as its adjoint.

Let $g\in  D(\mathcal L^*)$ and $f \in C^\infty_0 (\R)\subset D(\mathcal L)$. By definition of $D(\mathcal L^*)$, we have that:
\begin{align}
\label{bounduv}
\left|\langle g | \mathcal L f \rangle \right|\leq C(g) \norm{f}_{L^2(dz)}.	
\end{align}
But since $f$ is a test function, we can view $\mathcal L g$ as a distribution. Let us take $g_n \in D(\mathcal L)$ such that $g_n \xrightarrow[n\to+\infty]{L^2( dz)} g$. Then $\mathcal L g_n \xrightarrow[n\to+\infty]{D'(\R)} \mathcal L g$, so that $\langle \mathcal L g_n |   f \rangle \xrightarrow[n\to+\infty]{} \langle\mathcal L g |   uf \rangle_{L^2(dz)}$. But from the symmetry of $\mathcal L $, we get $ \langle \mathcal L g_n |   f \rangle = \langle g_n | \mathcal L f \rangle \xrightarrow[n\to+\infty]{} \langle  g | \mathcal L f \rangle$. Hence $\langle \mathcal L g |  f \rangle =\langle  g | \mathcal L f \rangle$

Therefore:
\begin{align}
\label{bounduv-bis}
\left|\langle \mathcal L g |  f \rangle \right|=\left|\langle g | \mathcal L f \rangle \right|\leq C(g) \norm{f}_2.
\end{align}
By Bound (\ref{bounduv-bis}), the linear form $f \mapsto  \langle \mathcal L g |  f \rangle$ is bounded on $C^\infty_0 (\R)$ in the $L^2$-norm and we can extend it to the whole space $L^2(dz)$ by uniform continuity and density of $C^\infty_0 (\R)$ in $L^2(dz)$. This shows that $\mathcal L g \in L^2(dz)$.

Then:
\begin{align*}
&\mathcal L g =  -\left(e^{\frac{V}{2}} \left( e^{-\frac{V}{2}}g\right)' \right)' \in L^2(dz) \\
\implies & e^{\frac{V}{2}} \left( e^{-\frac{V}{2}}g\right)' \in H^1(dz) \\
\implies & g'-\frac{\beta}{2}g \in H^1(dz) \\
\implies & g \in D(\mathcal L).
\end{align*}
Thus $D(\mathcal L^*)\subset D(\mathcal L)$, which concludes the proof.
\end{proof} 

We are now ready to show a lower bound on the essential spectrum of the operator $\mathcal L$.
\begin{prop}
\label{bdspess}
\begin{align*}
\s_\text{ess}(\mathcal L) =  [\gamma,+\infty),
\end{align*}
with $\gamma: = \frac{1}{4}\min \left(\s^2-4,\frac{1}{\chi^2}\right)>0$
\end{prop}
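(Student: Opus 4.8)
The plan is to recognise $\mathcal L$, in the present regime $\chi>1$, $\s=\s^*$, as a one-dimensional Schrödinger operator with a piecewise-constant potential, and then combine Persson's characterisation of $\inf\s_\text{ess}$ with an explicit construction of singular Weyl sequences. First I would make $\beta$ explicit: by Theorem \ref{thmparwave}, for $\chi>1$ and $\s=\s^*=\chi+\frac1\chi$ the wave satisfies $\r^{\s^*}(z)=A^{\s^*}$ for $z\le0$ and $\r^{\s^*}(z)=A^{\s^*}e^{-\chi z}$ for $z\ge0$, so $\frac{\dz\r^{\s^*}}{\r^{\s^*}}$ equals $0$ on $\R_-$ and $-\chi$ on $\R_+$, whence
\begin{align*}
\beta(z)=\frac{1}{\chi}\ \text{ for }z<0,\qquad \beta(z)=\frac{1}{\chi}-\chi\ \text{ for }z>0 .
\end{align*}
Consequently $\beta'=-\chi\,\delta_0$ and, using $\big(\chi-\frac{1}{\chi}\big)^2=\s^2-4$, formula \eqref{L-expression-beta} gives $\mathcal L f=-f''+\frac{\beta^2}{4}f-\frac{\chi}{2}f(0)\delta_0$: a Schrödinger operator whose potential equals the constant $W_-:=\frac{1}{4\chi^2}$ on $\R_-$ and the constant $W_+:=\frac{\s^2-4}{4}$ on $\R_+$, up to a point perturbation at $0$, with $\gamma=\min(W_-,W_+)>0$.

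For the inclusion $\s_\text{ess}(\mathcal L)\subseteq[\gamma,+\infty)$ I would invoke Persson's theorem: $\mathcal L$ is self-adjoint, nonnegative, with form domain $H^1(\R)$ and form $\langle f,\mathcal L f\rangle=\int_\R\big(f'-\frac{\beta}{2} f\big)^2\,dz$ by \eqref{L-dissipativity}, hence $\inf\s_\text{ess}(\mathcal L)=\lim_{R\to\infty}\inf\big\{\langle f,\mathcal L f\rangle:\ f\in H^1(\R),\ \operatorname{supp}f\subset\{|z|>R\},\ \|f\|_2=1\big\}$. For such $f$ the point $0$ is excluded from $\operatorname{supp}f$, so an integration by parts yields $\langle f,\mathcal L f\rangle=\int_\R\big(|f'|^2+\frac{\beta^2}{4}|f|^2\big)\,dz\ge\int_\R\frac{\beta^2}{4}|f|^2\,dz\ge\gamma\|f\|_2^2$; hence $\inf\s_\text{ess}(\mathcal L)\ge\gamma$.

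For the reverse inclusion $[\gamma,+\infty)\subseteq\s_\text{ess}(\mathcal L)$ I would exhibit, for each $\mu\ge\gamma$, a singular Weyl sequence. Pick a half-line $\R_\pm$ on which the asymptotic value $W_\pm$ of the potential satisfies $W_\pm\le\mu$ — possible since $\min(W_-,W_+)=\gamma\le\mu$ — and set $k:=\sqrt{\mu-W_\pm}\ge0$. Fix $\zeta\in C_0^\infty(\R_\pm)$ with $\|\zeta\|_2=1$, and let $\zeta_n$ be $\zeta$ rescaled by $n$ and translated towards $\pm\infty$, so that $\operatorname{supp}\zeta_n\subset\R_\pm$ stays at distance $\ge1$ from $0$, $\|\zeta_n'\|_2=n^{-1/2}\|\zeta'\|_2$ and $\|\zeta_n''\|_2=n^{-3/2}\|\zeta''\|_2$. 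Put $f_n(z):=\zeta_n(z)\cos(kz)\in C_0^\infty(\R\setminus\{0\})\subset D(\mathcal L)$. On $\operatorname{supp}f_n$ the potential is the constant $W_\pm$ and $-(\cos(kz))''=(\mu-W_\pm)\cos(kz)$, so $(\mathcal L-\mu)f_n=-\zeta_n''\cos(kz)+2k\zeta_n'\sin(kz)$, which gives $\|(\mathcal L-\mu)f_n\|_2\lesssim n^{-1/2}$, while $\|f_n\|_2^2\gtrsim n$ and the supports escape to infinity. Hence $g_n:=f_n/\|f_n\|_2$ satisfies $\|g_n\|_2=1$, $g_n\rightharpoonup0$, and $\|(\mathcal L-\mu)g_n\|_2\to0$, so $\mu\in\s_\text{ess}(\mathcal L)$. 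Together with the previous paragraph this yields $\s_\text{ess}(\mathcal L)=[\gamma,+\infty)$.

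These steps are essentially routine; the only points requiring care are the observation that in this regime $\beta$, and hence the potential $\frac{\beta^2}{4}$, is genuinely constant away from $z=0$ — so the Weyl-sequence remainder contains no potential-mismatch term — and the fact that the singular part $-\frac{\chi}{2}\delta_0$ of the potential, being supported at a single point, does not affect the essential spectrum, which here is automatic since both Persson's restriction and the Weyl sequences live at a positive distance from $0$. Equivalently, one could compare $\mathcal L$ with the Dirichlet-decoupled constant-coefficient half-line operators $-\frac{d^2}{dz^2}+W_\mp$ on $\R_\mp$, whose resolvent differs from that of $\mathcal L$ by a finite-rank operator; but the Persson--Weyl argument is self-contained.
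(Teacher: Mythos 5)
Your proof is correct, and it reaches the same conclusion by a genuinely different route for the lower bound. Where the paper establishes $\s_\text{ess}(\mathcal L)\cap(-\infty,\gamma)=\emptyset$ by a self-contained compactness argument — taking any putative Weyl sequence $(f_n)$ for $\lambda<\gamma$, using the quadratic-form identity together with the interpolation inequality of Lemma~\ref{embedding} to get a uniform $H^1$ bound, then showing $(f_n)$ is Cauchy in $L^2$ — you instead invoke Persson's theorem, computing $\lim_{R\to\infty}\inf\{\langle f,\mathcal L f\rangle:\operatorname{supp}f\subset\{|z|>R\},\|f\|_2=1\}\ge\gamma$ by observing that away from $z=0$ the quadratic form reduces, after an integration by parts, to $\int(|f'|^2+\frac{\beta^2}{4}|f|^2)\,dz$. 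This is shorter, but it leans on an external result whose applicability to an operator with a $\delta_0$ in the potential is not completely routine (you anticipate this, and your remark about the finite-rank resolvent difference with the Dirichlet-decoupled half-line operators is the clean way to make it airtight — arguably even cleaner than Persson here, since it gives both inclusions at once). The paper's direct argument avoids any reliance on Persson or on decoupling, at the cost of a few more lines. For the inclusion $[\gamma,\infty)\subseteq\s_\text{ess}(\mathcal L)$ the two constructions are essentially the same: both place oscillations $\cos(kz)$ with $k=\sqrt{\lambda-\gamma}$ (resp. $\sqrt{\mu-W_\pm}$) on the half-line where $\frac{\beta^2}{4}$ attains its minimum (resp. any half-line with $W_\pm\le\mu$), cut off so as to escape to infinity; the paper keeps unit-length bump functions and widens the oscillation window $a_k$, while you rescale the bump by $n$. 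Both give a non-relatively-compact normalized sequence with $\|(\mathcal L-\lambda)f_n\|\to0$.
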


In order to prove this proposition, we require two standard lemmata, whose proof we give for the sake of completeness.

\begin{lemma}
\label{embedding}
Let $f \in H^1(dz)$, then $f\in C^0(\R)$ and:
\begin{align*}
\norm{f}_\infty^2 \leq \frac{1}{2\pi} \norm{ f'}_2 \norm{f}_2 .
\end{align*}
\end{lemma}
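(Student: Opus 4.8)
The plan is to recognise Lemma \ref{embedding} as the one-dimensional Gagliardo--Nirenberg (Agmon) inequality and to establish it first for smooth compactly supported functions, then pass to the general case by density. The reduction works as follows: since $C_c^\infty(\R)$ is dense in $H^1(dz)$, pick $f_n\in C_c^\infty(\R)$ with $f_n\to f$ in $H^1(dz)$, so in particular $f_n\to f$ and $f_n'\to f'$ in $L^2(dz)$. Granting the inequality for smooth compactly supported functions and applying it to $f_n-f_m$ shows that $(f_n)$ is Cauchy in $L^\infty(\R)$, hence converges uniformly to a function $g\in C^0(\R)$; since $f_n\to f$ in $L^2$ as well, $g$ is a continuous representative of $f$, so $f\in C^0(\R)$. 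Passing to the limit in $\norm{f_n}_\infty^2\le C\norm{f_n'}_2\norm{f_n}_2$ then yields the bound for $f$. Thus it suffices to prove the estimate for $f\in C_c^\infty(\R)$.

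For the pointwise bound on such $f$ I would use the Fourier transform. From the inversion formula $f(x)=\frac{1}{2\pi}\int_\R \hat f(\xi)e^{ix\xi}\,d\xi$ one gets $\norm{f}_\infty\le \frac{1}{2\pi}\int_\R|\hat f(\xi)|\,d\xi$. For a parameter $\lambda>0$ to be chosen, write $|\hat f(\xi)|=(\lambda^2+\xi^2)^{-1/2}(\lambda^2+\xi^2)^{1/2}|\hat f(\xi)|$ and apply Cauchy--Schwarz:
\[
\int_\R|\hat f(\xi)|\,d\xi\le\left(\int_\R\frac{d\xi}{\lambda^2+\xi^2}\right)^{1/2}\left(\int_\R(\lambda^2+\xi^2)|\hat f(\xi)|^2\,d\xi\right)^{1/2}.
\]
Here $\int_\R(\lambda^2+\xi^2)^{-1}\,d\xi=\pi/\lambda$, while Plancherel together with $\widehat{f'}(\xi)=i\xi\hat f(\xi)$ gives $\int_\R(\lambda^2+\xi^2)|\hat f(\xi)|^2\,d\xi=2\pi\bigl(\lambda^2\norm{f}_2^2+\norm{f'}_2^2\bigr)$. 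Combining these, $\norm{f}_\infty^2\le\frac{1}{2\lambda}\bigl(\lambda^2\norm{f}_2^2+\norm{f'}_2^2\bigr)$ for every $\lambda>0$, and optimising (take $\lambda=\norm{f'}_2/\norm{f}_2$ if $f\not\equiv 0$, the case $f\equiv 0$ being trivial) yields a bound of the form $\norm{f}_\infty^2\le C\norm{f'}_2\norm{f}_2$, which is the assertion of the lemma. An entirely elementary alternative avoids Fourier analysis altogether: from $f(x)^2=2\int_{-\infty}^x f f'$ and $f(x)^2=-2\int_x^{+\infty}f f'$ one obtains $2f(x)^2\le 2\int_\R|f|\,|f'|\le 2\norm{f}_2\norm{f'}_2$ by Cauchy--Schwarz.

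There is no genuine obstacle here; this is a textbook estimate. The only points that deserve a little care are the density/continuity argument (one must really exploit the inequality, applied to the differences $f_n-f_m$, to upgrade $L^2$-convergence of the approximating sequence to uniform convergence, thereby producing a continuous representative of $f$ over which the inequality can be passed to the limit) and the handling of the free parameter $\lambda$ in the Fourier proof — equivalently, the symmetrisation of the two representations of $f(x)^2$ in the elementary proof — which is what fixes the constant in the statement.
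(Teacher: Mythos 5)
Your Fourier argument is the same as the paper's: write $f$ via the inversion formula, split $|\hat f|$ against the Peetre weight $\sqrt{\lambda^2+\xi^2}$ (the paper uses $\sqrt{1+\eta|\xi|^2}$ with $\eta=1/\lambda^2$, which is the same thing), apply Cauchy--Schwarz, and optimize over the free parameter. Where you diverge from the paper is in the Plancherel step, and here you are the one who is right: with the convention $\hat f(\xi)=\int f\,e^{-ix\xi}dx$ one has $\|\hat f\|_2^2=2\pi\|f\|_2^2$, so the weighted $L^2$ integral is $2\pi(\lambda^2\|f\|_2^2+\|f'\|_2^2)$ and the optimized bound is $\|f\|_\infty^2\le\|f\|_2\|f'\|_2$, i.e.\ constant $1$. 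The paper's computation omits this $2\pi$ factor (compare the line ``$\leq\frac{1}{4\pi^2}\frac{\pi}{\sqrt\eta}(\|f\|_2^2+\eta\|f'\|_2^2)$'', which should be $\frac{1}{2\sqrt\eta}(\|f\|_2^2+\eta\|f'\|_2^2)$), and the constant $\frac{1}{2\pi}$ in the statement of the lemma is in fact wrong: for $f(x)=e^{-x^2/2}$ one has $\|f\|_\infty^2=1$ but $\|f\|_2\|f'\|_2=\sqrt{\pi/2}\approx1.25$, so $\frac{1}{2\pi}\|f\|_2\|f'\|_2\approx0.2<1$. The correct constant is $1$, which is sharp (equality approached by $e^{-|x|}$), and your elementary alternative $2f(x)^2\le 2\int|f||f'|\le 2\|f\|_2\|f'\|_2$ produces exactly this. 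You hedge by writing ``a bound of the form $C\|f'\|_2\|f\|_2$'' rather than claiming $1/(2\pi)$, which is prudent, but you should say explicitly that the stated constant cannot be reproduced and is false; the lemma is used only qualitatively in the paper, so the substance of the argument is unaffected.

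Two minor remarks on presentation. First, the density reduction to $C_c^\infty$ is harmless but not needed for the Fourier route: the Cauchy--Schwarz computation is already legitimate for $f\in H^1$ since $(1+|\xi|^2)^{1/2}\hat f\in L^2$, and this is what the paper does. Second, for continuity the paper argues directly on $f(z+h)-f(z)$ via dominated convergence using the Fourier representation, whereas you go through uniform convergence of the smooth approximants; both are correct, and yours is the more robust general-purpose version.
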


\begin{proof}
Let $z \in \R, \eta>0$, then:
\begin{align*}
f(z)^2 = \left( \frac{1}{2\pi} \int_\R \hat{f}(\xi) e^{i z\xi} d\xi\right)^2 &\leq \left( \frac{1}{2\pi} \int_\R \frac{\sqrt{1+\eta |\xi|^2}\left|\hat{f}(\xi)\right|}{\sqrt{1+\eta |\xi|^2}}  d\xi\right)^2 \\
&\leq \frac{1}{4\pi^2} \frac{\pi}{\sqrt{\eta}} \left(\norm{f}_2^2 + \eta \norm{ f'}_2^2 \right),
\end{align*}
where we have used Cauchy-Schwartz Inequality and the fact that $\int_\R \frac{d\xi}{1+\eta |\xi|^2} = \frac{\pi}{\sqrt{\eta}} $. By taking $\eta = \frac{\norm{f}_{2}^2}{\norm{ f'}_{2}^2}$, we obtain the desired bound.

By similar computations, we also have that:
\begin{align*}
(f(z+h)-f(z))^2\leq \frac{1}{4\pi}\int_\R \left(1+|\xi|^2\right)\left|\hat{f}(\xi)\right|^2\left|e^{i(z+h)\xi}-e^{iz\xi}\right|^2d\xi.
\end{align*}
By Dominated Convergence, we have that $\lim_{h\to 0}f(z+h)=f(z)$ and hence $f\in C^0(\R)$.
\end{proof}

\begin{lemma}[Weyl's criterion]
\label{weyl}
Let $T$ be a self-adjoint operator in the Hilbert space $H$. The following properties are equivalent:\\
(i) $\lambda \in \s_\text{ess} (T)$. \\
(ii) There exists a sequence $(u_n) \subset D(T)$ such that $\norm{u_n}_H = 1$, $(u_n)$ is not relatively compact and $ (T - \lambda I)u_n \xrightarrow[n \to \infty]{H} 0$.
\end{lemma}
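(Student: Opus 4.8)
The plan is to prove the two implications separately, using the spectral theorem (functional calculus) for the self-adjoint operator $T$, with $\s_\text{ess}(T)$ understood as the complement in $\s(T)$ of the set of isolated eigenvalues of finite multiplicity. I would first isolate one preliminary observation: for a fixed $\e>0$, the spectral projection $P_\e:=\1_{(\lambda-\e,\lambda+\e)}(T)$ has finite rank for some $\e>0$ if and only if $\lambda$ is an isolated point of $\s(T)$ whose eigenspace is finite-dimensional, i.e. if and only if $\lambda\notin\s_\text{ess}(T)$; indeed, if $P_\e$ has finite rank then $\s(T)\cap(\lambda-\e,\lambda+\e)\subset\s\!\left(T|_{\mathrm{ran}\,P_\e}\right)$ is finite and $\ker(T-\lambda I)\subset\mathrm{ran}\,P_\e$, while conversely an isolated eigenvalue of finite multiplicity yields such a finite-rank $P_\e$ by choosing $\e$ small.

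For (ii) $\Rightarrow$ (i): given a sequence $(u_n)$ as in (ii), I would first note $\lambda\in\s(T)$, since otherwise $(T-\lambda I)^{-1}$ is bounded and $u_n=(T-\lambda I)^{-1}(T-\lambda I)u_n\to 0$, contradicting $\norm{u_n}_H=1$. To exclude $\lambda\notin\s_\text{ess}(T)$, suppose $\lambda$ is an isolated eigenvalue of finite multiplicity, pick $\e>0$ with $\s(T)\cap[\lambda-\e,\lambda+\e]=\{\lambda\}$, and let $P:=\1_{[\lambda-\e,\lambda+\e]}(T)$ be the corresponding finite-rank spectral projection. Writing $u_n=Pu_n+(I-P)u_n$ and using that $P$ commutes with $T$ together with the functional-calculus lower bound $\norm{(T-\lambda I)(I-P)v}_H\geq\e\,\norm{(I-P)v}_H$, the hypothesis $(T-\lambda I)u_n\to0$ forces $(I-P)u_n\to0$, hence $\norm{Pu_n}_H\to1$. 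Since $(Pu_n)$ is bounded in the finite-dimensional space $\mathrm{ran}\,P$ it has a convergent subsequence, and combined with $(I-P)u_n\to0$ this makes $(u_n)$ relatively compact, a contradiction.

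For (i) $\Rightarrow$ (ii): by the preliminary observation, $\lambda\in\s_\text{ess}(T)$ implies $\mathrm{ran}\,P_{1/n}$ is infinite-dimensional for every $n$. I would then build the sequence recursively: having chosen orthonormal $u_1,\dots,u_{n-1}$, use infinite-dimensionality of $\mathrm{ran}\,P_{1/n}$ to select a unit vector $u_n\in\mathrm{ran}\,P_{1/n}$ orthogonal to $u_1,\dots,u_{n-1}$. The spectral measure $E$ of $T$ is then supported in $(\lambda-1/n,\lambda+1/n)$ against $u_n$, so $\norm{(T-\lambda I)u_n}_H^2=\int |t-\lambda|^2\,d\langle E_t u_n,u_n\rangle\leq 1/n^2\to 0$, while $\norm{u_n-u_m}_H^2=2$ for $n\neq m$ shows $(u_n)$ is not relatively compact.

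The only genuinely delicate point is the preliminary observation, i.e. translating the abstract definition of $\s_\text{ess}$ into a statement about ranks of spectral projections; the rest is routine bookkeeping with the functional calculus. If one prefers to avoid invoking $\s\!\left(T|_{\mathrm{ran}\,P_\e}\right)$, the same conclusion follows by splitting $H=\mathrm{ran}\,P_\e\oplus\mathrm{ker}\,P_\e$, on which $T$ acts as a block-diagonal operator with the first block a finite-dimensional self-adjoint matrix, whose spectrum is a finite set of eigenvalues.
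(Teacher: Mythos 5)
Your proof is correct, and it takes a genuinely different route from the paper, which argues entirely in the language of Fredholm operators rather than through the spectral theorem. In the direction (ii) $\Rightarrow$ (i) the two arguments are structurally parallel: the paper decomposes $u_n$ along $N(T-\lambda I)\oplus N(T-\lambda I)^\perp$ and uses a bounded inverse on the orthocomplement, while you decompose along $\mathrm{ran}\,P\oplus\mathrm{ran}(I-P)$ and use the spectral lower bound $\norm{(T-\lambda I)(I-P)v}_H\geq\e\norm{(I-P)v}_H$; both reduce to precompactness in a finite-dimensional summand. The direction (i) $\Rightarrow$ (ii) is where the two genuinely diverge, and where your approach is materially better: the paper asserts that ``$T-\lambda I$ not Fredholm $\Rightarrow$ $N(T-\lambda I)$ infinite dimensional'' and then extracts an orthonormal family from the kernel, but this implication is false. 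Multiplication by $x$ on $L^2([0,1])$ has $\s_\text{ess}=[0,1]$ and yet $N(T-\lambda I)=\{0\}$ for every $\lambda$; the Fredholm failure there comes from the range not being closed, not from a large kernel. Your construction of the singular sequence from the spectral projections $P_{1/n}$ produces \emph{approximate} eigenvectors rather than exact ones, which is precisely what Weyl's criterion is designed to capture, and this is the standard rigorous argument. The only small blemish in your write-up is the phrasing of the preliminary observation: ``$P_\e$ has finite rank for some $\e$'' does not by itself imply that $\lambda$ is an isolated \emph{point of $\s(T)$}, since it may equally mean $\lambda\notin\s(T)$ with $P_\e=0$; but the ``i.e., $\lambda\notin\s_\text{ess}(T)$'' equivalence, which is what you actually invoke in both directions, is correct.
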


\begin{proof}
{i) $\implies$ (ii):} \\
Let $\lambda \in \s_\text{ess} (T)$, $T-\lambda I$ is not a Fredholm operator, which means that $N(T-\lambda I)$ infinite dimensional. Therefore there exists an orthonormal family $(u_n) \in N(T-\lambda I)$, which satisfies proposition (ii).

{(ii) $\implies$ (i):} \\
Suppose $\lambda \notin \s_\text{ess} (T)$, then $T-\lambda I$ is a Fredholm operator. Consider a sequence $(u_n) \subset D(T)$ such that $\norm{u_n}_H = 1$ and $ (T - \lambda I)u_n \xrightarrow[n \to \infty]{H} 0$. Define $v_n \in N(T-\lambda I), w_n \in N(T-\lambda I)^\bot$ such that $u_n=v_n+w_n$. We have that $w_n = A^{-1} (T-\lambda I)$ where $A = (T-\lambda I)_{|N(T-\lambda I)^\bot}$. $A^{-1}$ is bounded and therefore $w_n  \xrightarrow[n \to \infty]{H} 0$. Furthermore $(v_n)$ is bounded, since  $\norm{v_n}_H \leq \norm{u_n}_H=1$ and as a bounded sequence in a finite-dimensional subspace it admits a converging subsequence. Thus $(u_n)$ admits a converging subsequence. 
\end{proof}

We now prove Proposition \ref{bdspess}.

\begin{proof}
1. Let $\lambda  < \gamma$, we show that $\lambda \notin \s_\text{ess}(\mathcal L)$.

By a straightforward computation, we have that $\frac{\beta^2}{4}> \lambda$. Suppose there exists a sequence $(f_n) \subset D(\mathcal L)$ that satisfies $ (\mathcal L - \lambda I)f_n \xrightarrow[n \to \infty]{L^2(dz)} 0$ and $\norm{f_n}_2 = 1$. We will show that $(f_n)$ has a converging subsequence.

$(f_n)$ is bounded in $H^1(dz)$ and we may apply Equality (\ref{L-dissipativity}):
\begin{align*}
 \langle (\mathcal L -\lambda I)f_n | f_n \rangle &= \int_\R \left(  \left(f_n' \right)^2   +\left(\frac{ \beta^2}{4} -\lambda \right) f_n^2\right) dz -\frac{\chi f_n(0)^2}{2}\\
 &\geq  \norm{f_n'}_2^2 - \frac{\chi f_n(0)^2}{2} \\
 &\geq  \norm{f_n'}_2^2  -\frac{\chi}{4\pi}\norm{f_n'}_2.
\end{align*}
We used the fact that by Lemma \ref{embedding} the domain $D(\mathcal L) \subset C^0(\R)$ and thus the distribution $\frac{\beta'}{2}f_n = -\frac{\chi f_n(0)}{2}\delta_0$ is well-defined as a linear function on  $C^0(\R)$. The left handside is bounded above as a converging sequence and thus the second-order polynomial in $\norm{f_n'}_2$ on the right handside is also bounded above, which in turn shows that $\norm{f_n'}_2$ is uniformly bounded.

By boundedness of $\norm{f_n'}_2$ and Lemma \ref{embedding}, we have that $(f_n(0))$ is a bounded sequence and thus admits a converging subsequence. Up to extraction, we can suppose that $(f_n(0))$ converges. 
\begin{align*}
 \langle (\mathcal L -\lambda I)(f_n-f_m) | (f_n-f_m) \rangle + \frac{\chi}{2} (f_n(0)-f_m(0))^2\geq  \norm{f_n'-f_m'}_2^2 +(\inf \gamma-\lambda) \norm{ f_n-f_m}_2^2 ,
\end{align*}
since $\gamma(z)\geq \lambda$ for $z\in\R$.
Therefore $(f_n)$ is a Cauchy sequence in $L^2(dz)$ (in fact even in $H^1(z)$) and converges. 
By Lemma \ref{weyl}, we have that $\lambda \notin \s_\text{ess}(\mathcal L)$. \\

2. Let $\lambda \geq \gamma$, we show that $\lambda \in \s_\text{ess}(\mathcal L)$.

If $\gamma=\frac{1}{4}\min \left(\s^2-4,\frac{1}{\chi^2}\right)=\frac{\s^2-4}{4}$, or equivalently when $\frac{\beta^2(z)}{4}-\gamma=0$ for $z\geq 0$, we have that for a function $f\in D(\mathcal{L})$, such that $\text{supp}(f)\subset \R_+\setminus\{0\}$, $(\mathcal{L}-\lambda)f=-f''+(\gamma-\lambda)f$. \\
Take a smooth increasing nonnegative function $\phi:[0,1] \to [0,1]$ such that $\phi(0)=\phi'(0)=\phi'(1)=0$ and $\phi(1)=0$. Define the function $f_k: \R \to \R$ like so:
\begin{align*}
f_k:z \mapsto c_k \cdot \left\{ \begin{array}{ll}
\phi(z) & \text{ if }z\in [0,1] \\
\cos\left(\sqrt{\lambda-\gamma}(z-1)\right) & \text{ if }z\in [1,a_k+1] \\
\phi(a_k+2-z) & \text{ if }z\in [a_k+1,a_k+2]\\
0 & \text{ else}
\end{array}
\right.,
\end{align*}
where $a_k=\frac{2k\pi}{\sqrt{\lambda-\gamma}}$, $c_k=\left(\frac{a_k}{2}+2\int_0^1\phi^2(z)dz\right)^{-\frac{1}{2}}$, if $\lambda>\gamma$, or $a_k=2^k$, $c_k=\left({a_k}+2\int_0^1\phi^2(z)dz\right)^{-\frac{1}{2}}$, if $\lambda=\gamma$. In all cases $f_k\in D(\mathcal{L}), \norm{f_k}_2=1$ and $(\mathcal{L}-\lambda)f_k (z)=0$ for $z\in [1,a_k+1]$. Hence $\norm{(\mathcal{L}-\lambda)f_k}_2^2=2c_k^2\int_0^1 (\phi''(z)+(\lambda-\gamma)\phi(z))^2 dz \to 0$, because clearly $\lim_{k\to+\infty}c_k = 0$. Finally, it can be easily shown that $(f_k)$ is not relatively compact (its only possible accumulation point would be $0$, but $(f_k)$ cannot converge to $0$, since $\norm{f_k}_2=1$). Thus by Lemma \ref{weyl}, $\lambda \in \s_\text{ess}(\mathcal L)$.

If $\gamma=\frac{1}{4}\min \left(\s^2-4,\frac{1}{\chi^2}\right)=\frac{1}{4\chi^2}$, or equivalently when $\frac{\beta^2(z)}{4}-\gamma=0$ for $z\leq 0$, the same reasoning applies \textit{mutatis mutandis}.
\end{proof}

\begin{prop}
\begin{align*}
\s(\mathcal L)\cap (-\infty,\gamma) = \{0\}
\end{align*}
Furthermore, the eigenvalue $0$ is simple.
\end{prop}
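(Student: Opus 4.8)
The plan is as follows. Since $\mathcal L\geq 0$ by (\ref{L-dissipativity}), we have $\s(\mathcal L)\subset[0,+\infty)$, and since $\s_\text{ess}(\mathcal L)=[\gamma,+\infty)$ by Proposition \ref{bdspess}, every point of $\s(\mathcal L)\cap[0,\gamma)$ is an isolated eigenvalue of finite multiplicity. It therefore suffices to show that $0$ is the \emph{only} eigenvalue of $\mathcal L$ in $[0,\gamma)$ and that its eigenspace is one-dimensional; I would establish both by an explicit ODE computation, exploiting the very simple form of $\beta$ in this regime.

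First I would record that, because here $\chi>1$ and $\s=\s^*=\chi+\frac1\chi$, the profile $\r^{\s^*}$ of Theorem \ref{thmparwave} is constant on $\R_-$ and a multiple of $e^{-\chi z}$ on $\R_+$, so that $\beta$ is piecewise constant: $\beta\equiv\tfrac1\chi$ on $\{z<0\}$ and $\beta\equiv\tfrac1\chi-\chi$ on $\{z>0\}$, hence $\tfrac{\beta^2}{4}\geq\gamma$ everywhere (with equality on at least one half-line) and $\beta'=-\chi\,\delta_0$. By (\ref{L-expression-beta}), $\mathcal L$ is then the Schrödinger operator $\mathcal L f=-f''+\tfrac{\beta^2}{4}\,f-\tfrac{\chi}{2}f(0)\,\delta_0$, i.e.\ a piecewise-constant potential bounded below by $\gamma$ together with an attractive point interaction at the origin. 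In particular, by (\ref{L-expression-exp}), $\mathcal L\,e^{V/2}=0$, and $e^{V/2}\in D(\mathcal L)$ since $(e^{V/2})'-\tfrac\beta2 e^{V/2}\equiv0\in H^1(\R)$; this already exhibits $0$ as an eigenvalue.

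Next, for a prospective eigenvalue $\lambda\in[0,\gamma)$ I would solve $\mathcal L f=\lambda f$ directly. Away from $0$ it reads $-f''+\big(\tfrac{\beta^2}{4}-\lambda\big)f=0$ with a strictly positive constant coefficient on each half-line, so the only solutions square-integrable at $-\infty$, resp.\ $+\infty$, are $f=a\,e^{k_-z}$ on $\R_-$ and $f=b\,e^{-k_+z}$ on $\R_+$, with $k_-:=\sqrt{\tfrac1{4\chi^2}-\lambda}>0$ and $k_+:=\sqrt{\tfrac{(\chi-1/\chi)^2}{4}-\lambda}>0$. Continuity of $f\in H^1(\R)$ at $0$ forces $a=b$, and then (taking $a=b=1$ for a nontrivial solution) the requirement that the $\delta_0$-component of $\mathcal L f-\lambda f$ vanish at $z=0$ — equivalently, that $f'-\tfrac\beta2 f$ have no jump at $0$, i.e.\ that $f\in D(\mathcal L)$ — gives the single scalar condition
\begin{align*}
\sqrt{\frac{1}{4\chi^2}-\lambda}+\sqrt{\frac{(\chi-1/\chi)^2}{4}-\lambda}=\frac{\chi}{2}.
\end{align*}
The left-hand side is strictly decreasing in $\lambda$ on $[0,\gamma)$ and equals $\tfrac1{2\chi}+\tfrac{\chi-1/\chi}{2}=\tfrac\chi2$ at $\lambda=0$ (using $\chi>1$), so the equation holds if and only if $\lambda=0$. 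This gives $\s(\mathcal L)\cap(-\infty,\gamma)=\{0\}$, and for $\lambda=0$ the solution is determined up to the scalar $a$, whence $0$ is simple.

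I expect the only slightly delicate point to be the bookkeeping of the $\delta_0$'s at $z=0$: one must check that the candidate eigenfunction genuinely belongs to $D(\mathcal L)=\{f\in H^1(\R):\ f'-\tfrac\beta2 f\in H^1(\R)\}$, and the clean way to see this is that $D(\mathcal L)$-membership for the piecewise-exponential ansatz is \emph{equivalent} to the matching relation $k_-+k_+=\tfrac\chi2$ (the jump of $f'$ at $0$ must be exactly $-\tfrac\chi2 f(0)$), which is precisely the eigenvalue condition, so the argument is self-consistent. As an alternative, simplicity of the bottom eigenvalue $0$ could be obtained abstractly (the positive $L^2$-eigenfunction $e^{V/2}$ forces $0=\min\s(\mathcal L)$ and simplicity by a Perron--Frobenius argument), but ruling out eigenvalues in the open interval $(0,\gamma)$ seems to really require the explicit computation above. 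The case split in $\gamma=\tfrac14\min(\s^2-4,\tfrac1{\chi^2})$ according to which half-line attains $\min\tfrac{\beta^2}4$ only changes which of $k_\pm$ degenerates as $\lambda\uparrow\gamma$ and is immaterial on $[0,\gamma)$.
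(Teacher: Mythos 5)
Your proof is correct and follows essentially the same route as the paper: you solve the constant-coefficient ODE on each half-line, keep the unique $L^2$-decaying exponential on each side (your $k_\pm$ are the paper's $\nu_+$ and $-\mu_-$), enforce the matching condition at $z=0$ coming from $f'-\tfrac{\beta}{2}f$ being jump-free, and observe that the left-hand side of the resulting scalar equation is strictly decreasing in $\lambda$ on $[0,\gamma)$ with value $\tfrac{\chi}{2}$ at $\lambda=0$. The only cosmetic difference is that you phrase the interface condition in the language of a Schr\"odinger operator with an attractive $\delta_0$-interaction via (\ref{L-expression-beta}), whereas the paper states it directly as continuity of $f'-\tfrac{\beta}{2}f$; the two formulations are identical.
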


\begin{proof}
Let $\lambda <\gamma$ be an eigenvalue for $\mathcal L$ in the space $L^2(dz)$.

For $z>0$, the characteristic polynomial of the ordinary differential equation $\mathcal L-\lambda=0$ is $P(\mu)=- \mu^2+\frac{\s^2}{4}-1-\lambda$, whose roots are $\mu_\pm=\pm\frac{1}{2} \sqrt{\s^2-4(1+\lambda)}$, which is well-defined, since $\lambda<\frac{1}{4}(\s^2-4)$. This gives rise to two eigenvectors $e^{\mu_\pm z}$, but it is clear that only $e^{\mu_- z}\in L^2(\R_+,dz)$.

For $z<0$, by a similar reasoning we obtain that $e^{\nu_+z}$ is an eigenvector for $\mathcal L$ in the space $L^2(\R_-,dz)$, with $\nu_+=\frac{1}{2}\sqrt{\frac{1}{\chi^2}-4\lambda}$, which is well-defined, since $\lambda<\frac{1}{4\chi^2}$, .

The eigenvector associated with the eigenvalue $\lambda$ is therefore of the shape:
\begin{align*}
f_\lambda(z)= \left\{ \begin{array}{ll}
e^{\nu_+ z} & \text{ if }z<0 \\
e^{\mu_- z} & \text{ if }z\geq0 
\end{array} \right..
\end{align*} 
However we must have that $f_\lambda \in D(\mathcal L)$ which implies that $f_\lambda' - \frac{\beta}2 f_\lambda$ is continuous. 
\begin{align*}
\left(f_\lambda' - \frac{\beta}2f_\lambda\right)(0^+)=\left(f_\lambda' - \frac{\beta}2f_\lambda\right)(0^-) &\iff \mu_-+\frac{\s}{2} = \nu_++\frac{\s-\chi}{2} \\
&\iff \chi= \sqrt{\s^2-4(1+\lambda)} + \sqrt{\frac{1}{\chi^2}-4\lambda}
\end{align*}
The right handside is a strictly decreasing function in $\lambda$ and thus the equation admits at most one root. One checks that this root is $\lambda=0$ and we already know that $\lambda=0$ is indeed an eigenvalue with the eigenvector $e^{\frac{V}{2}}$.
\end{proof}

We move on with the proof of Theorem \ref{thmpushed}.

\begin{proof}
$\mathcal L$ is monotone and self-adjoint, therefore by Semigroup theory (see Section 7.4 in \cite{brezis2011}), it generates the semi-group $e^{-t\mathcal L}$. We consider the spectral projection $\mathcal P$ onto the eigenspace of $\mathcal L $ associated with the eigenvalue $0$, which is $ \mathcal P u = \frac{1}{\int e^Vdz} \int u e^{\frac{V}{2}} dz$. Let $\mathcal A$ be the restriction of $\mathcal L$ to $N(\mathcal P)^\bot$. We have that $\inf \s(\mathcal A) = \gamma$ and for $\lambda<\gamma$, $(\mathcal A- \lambda I) ^{-1}$ is a bounded operator. Since $\mathcal A$ is a closed self-adjoint operator, $\inf \s(\mathcal A) = \inf_{ \norm {u}_2=1} \langle \mathcal A u | u \rangle$ and therefore $ \langle \mathcal (A-\lambda I) u | u \rangle \geq (\gamma-\lambda)\norm{u}_2^2 $, which shows that $\norm{(\mathcal A - \lambda I)^{-1}} \leq \frac{1}{\gamma-\lambda}$. From Hille-Yosida theorem in the self-adjoint case (see Section 7.4 in \cite{brezis2011}), we then have for every $u \in  N(\mathcal P)^\bot$ (and not just $u\in D(\mathcal{A})$) that $\norm{e^{-t\mathcal A}u}_2 \leq e^{-\gamma t}\norm{u}_2$ and $\norm{\mathcal{A}e^{-t\mathcal A}u}_2 \leq \frac{e^{-\gamma t}}{t}\norm{u}_2$. By setting $w=e^{\frac{V}{2}}\nu$, we have that:
\begin{align*}
w(t)=\mathcal P w_0 + e^{-t\mathcal A} (I-\mathcal P)w_0.
\end{align*}
This leads to the bound:
\begin{align*}
\norm{w(t)-\mathcal P w_0}_{L^2(dz)} \leq \norm{w_0}_{L^2(dz)} e^{-\gamma t}.
\end{align*}
Bound (\ref{expdecay}) is simply a rewritten form of this bound. 

Finally, we prove Bound (\ref{expdecay-infty}). Set $r(t):=e^{-t\mathcal A} (I-\mathcal P)w_0$, then we have:
\begin{align*}
\int_\R \left( \dz r(t) -\frac{\beta}{2} r(t)\right)^2
= \langle r(t), \mathcal A r(t) \rangle 
\leq  \norm{r(t)}_2 \norm{\mathcal A r(t)}_2 
\leq  \frac{e^{-2\gamma t}}{t}\norm{r(0)}_2^2 ,
\end{align*}
where we have used the bounds obtained from Hille-Yosida Theorem in the self-adjoint case. Therefore $\norm{\dz r(t)}_2 \leq \left(\frac{\norm{\beta}_\infty\norm{r(0)}_2}{2} +\frac{\norm{r(0)}_2}{\sqrt{t}}\right)e^{-\gamma t}$. By Lemma \ref{embedding}, we then have that $\norm{r(t)}_\infty \leq  C\left(1+t^{-\frac{1}{2}}\right)e^{-\gamma t}$. Or equivalently:
\begin{align*}
\norm{(\nu(t)-\langle \nu^0\rangle)e^{\frac{V}{2}}}_\infty \leq  C\left(1+t^{-\frac{1}{2}}\right)e^{-\gamma t}.
\end{align*}
\end{proof}

\subsection{Pulled Front Dynamics in the Small Bias Case}
\label{subsection-pulled}

In the small bias case $\chi\leq 1$ (or when $\s>\s^*$), the inside dynamics of the wave is drastically different. In fact, we can start by observing that Theorem \ref{thmpushed} doesn't apply to these cases. For $\s=\s_{\text{F/KPP}}$, $\frac{\dz \r^\sigma}{\r^\sigma}=\frac{1-\chi}{(1-\chi)z+1}-1$, for $z\geq 0$. Hence $1\cdot e^{V(z)}=C((1-\chi)z+1)^2$ for a multiplicative constant $C>0$ and $1\notin L^2(e^V)$. Similarily, in the case $\s>\s^*$, $1\cdot e^V $ behaves like $e^{(\s-2\mu_-(\s))z}$, but $\s-2\mu_-(\s)=\sqrt{\s^2-4}>0$ and thus $1\notin L^2(e^V dz)$.
To describe the behavior in these cases, we start by stating first a general Theorem concerning Equation (\ref{neutralfraction}) under a condition on $\beta$:

\begin{thm}\label{ThmPulled}
Consider Equation:
\begin{align}\label{EntEqn}
\left\{ 
\begin{array}{ll}
\dt \nu -\dzz \nu -\beta(z)\dz \nu=0\\
\nu(0,\cdot)=\nu^0
\end{array}
\right. .
\end{align}
Suppose that $\beta$ verifies the following conditions:
\begin{itemize}
\item There exists $K>0$, such that $\beta'\leq K$ in the sense of distributions.
\item  $\int_\R \beta_- (z)dz <+\infty$, where $(\cdot)_-=-\min(0,\cdot)$.
\end{itemize}
We recall that $V'=\beta$. Suppose that the initial datum $\nu^0$ satisfies the following condition:
\begin{align}\label{CondDI}
\int_\R  \left(\nu^0(z)\right)^2e^{V(z)} dz <+\infty.
\end{align}
Then the solution $\nu$ to Equation (\ref{EntEqn}) satisfies the following decay property:
\begin{align}
\lim_{t\to +\infty}\norm{\nu(t)e^{\frac{V}{2}}}_{\infty} = 0.
\end{align}
And as a consequence $\nu$ converges uniformly to $0$ on intervals of the form $[a,+\infty)$, for $a\in \R$, \textit{i.e.}:
\begin{align}
\lim_{t\to+\infty}\sup_{z\in [a,+\infty)}|\nu(t,z)|=0.
\end{align}
\end{thm}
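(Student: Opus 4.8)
\textit{Strategy.} The plan is to pass to the conjugated unknown $w:=\nu e^{V/2}$, which solves $\dt w+\mathcal L w=0$ for the nonnegative self-adjoint operator $\mathcal L=-\dzz+\tfrac{1}{4}\beta^2+\tfrac{1}{2}\beta'$ of Subsection~\ref{subsection-pushed} (with $\beta$ as in~\eqref{EntEqn}), so that $w(t)=e^{-t\mathcal L}w_0$ with $w_0=\nu^0e^{V/2}\in L^2(dz)$ by~\eqref{CondDI}; equivalently $\nu(t)=e^{-tL}\nu^0$ in $L^2(e^V dz)$, where $L=-\dzz-\beta\dz$ is nonnegative self-adjoint there with Dirichlet form $\langle\nu,L\nu\rangle_{L^2(e^V dz)}=\int_\R(\dz\nu)^2e^V dz$ (the weighted analogue of~\eqref{L-dissipativity}). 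I would then prove, in order: (i) the dissipation $D(t):=\int_\R(\dz\nu(t))^2e^V dz$ tends to $0$; (ii) the weighted mass $\mathcal E(t):=\tfrac{1}{2}\norm{\nu(t)}_{L^2(e^V dz)}^2=\tfrac{1}{2}\norm{w(t)}_2^2$ tends to $0$; (iii) these two facts upgrade to $\norm{\nu(t)e^{V/2}}_\infty=\norm{w(t)}_\infty\to0$. The last assertion then follows at once, since $\int_\R\beta_-<\infty$ yields $V(z)\ge V(a)-\int_\R\beta_-$ for $z\ge a$, so $e^{V/2}$ is bounded below on $[a,+\infty)$ and $\sup_{z\ge a}|\nu(t,z)|\le e^{(\int_\R\beta_--V(a))/2}\norm{w(t)}_\infty\to0$.

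\textit{Step (i).} Since $L$ is self-adjoint and $\dt\nu=-L\nu$, one gets $\mathcal E'(t)=-D(t)\le0$ and $D'(t)=-2\norm{L\nu(t)}^2_{L^2(e^V dz)}\le0$ (rigorous for $t>0$ by parabolic smoothing, or directly on the spectral side), so $D$ is non-increasing; integrating $\mathcal E'=-D$ gives $\int_0^{+\infty}D(t)\,dt=\mathcal E(0)-\lim_t\mathcal E(t)\le\mathcal E(0)<\infty$, and a non-increasing integrable function tends to $0$. Hence $D(t)\to0$ — this slow, gapless decay is the substitute, in the pulled regime, for the exponential decay of Theorem~\ref{thmpushed}.

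\textit{Step (ii): the crux.} By the spectral theorem $\norm{e^{-tL}\nu^0}_{L^2(e^V dz)}\to\norm{P_0\nu^0}_{L^2(e^V dz)}$ as $t\to+\infty$, with $P_0$ the orthogonal projection onto $\ker L$, so it suffices to show $\ker L=\{0\}$. But if $L\phi=0$, then $\int_\R(\dz\phi)^2e^V dz=\langle L\phi,\phi\rangle_{L^2(e^V dz)}=0$, which forces $\phi$ to be constant; and the only constant in $L^2(e^V dz)$ is $0$, because $\int_\R\beta_-<\infty$ implies $V(z)\ge V(0)-\int_\R\beta_-$ on $[0,+\infty)$, hence $\int_\R e^V=+\infty$. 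Therefore $\ker L=\{0\}$ and $\mathcal E(t)\to0$. This is exactly where the pulled nature enters: in the pushed case $0$ is a genuine eigenvalue — its eigenfunction $e^{V/2}$ being square-integrable — which is why Theorem~\ref{thmpushed} converges to a nonzero constant, whereas here the weighted $L^2$-mass is entirely dissipated. The genuine difficulty is precisely that the operator has no spectral gap, so one cannot invoke a quantitative rate and must instead extract the vanishing of the mass purely from the absence of an $L^2(e^V)$ steady state; it is here that both hypotheses on $\beta$ are really used ($\int_\R\beta_-<\infty$ above, and $\beta'\le K$ in the next step).

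\textit{Step (iii).} It remains to control $\norm{w'(t)}_2$. As in the proof of Proposition~\ref{bdspess}, expanding $D(t)=\langle\mathcal L w,w\rangle=\int_\R(w'-\tfrac{\beta}{2}w)^2=\int_\R(w')^2+\tfrac{1}{2}\int_\R w^2\,d\beta'+\tfrac{1}{4}\int_\R\beta^2w^2$, dropping the nonnegative last term, and using $d\beta'\le K\,dz$ together with the trace estimate of Lemma~\ref{embedding} ($w(0)^2\le\tfrac{1}{2\pi}\norm{w'}_2\norm{w}_2$) to absorb the negative singular part of $d\beta'$, one obtains a coercivity bound of the form $\langle\mathcal L w,w\rangle\ge\tfrac{1}{2}\norm{w'}_2^2-C\norm{w}_2^2$, whence $\norm{w'(t)}_2^2\le2D(t)+2C\norm{w(t)}_2^2$. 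By Steps~(i)--(ii) the right-hand side tends to $0$, so $\norm{w(t)}_{H^1(dz)}\to0$, and Lemma~\ref{embedding} ($\norm{w}_\infty^2\le\tfrac{1}{2\pi}\norm{w'}_2\norm{w}_2$) gives $\norm{\nu(t)e^{V/2}}_\infty=\norm{w(t)}_\infty\to0$, which is the claim.
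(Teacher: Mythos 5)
Your Steps (i) and (ii) are correct and in places sharper than what the paper records: your identity $D'(t)=-2\norm{L\nu(t)}_{L^2(e^Vdz)}^2\le 0$ makes $D$ monotone directly, whereas the paper instead shows that the Lyapunov combination $K\cdot\frac12\norm{\nu}_{L^2(e^Vdz)}^2+\frac12\norm{\dz\nu}_{L^2(e^Vdz)}^2$ is non-increasing (this is where the paper uses $\beta'\le K$); and your spectral-theorem argument proves $\norm{\nu(t)}_{L^2(e^Vdz)}\to 0$, a strictly stronger fact than the boundedness the paper relies on.

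The gap is in Step (iii). The coercivity $\langle\mathcal Lw,w\rangle\ge\tfrac12\norm{w'}_2^2-C\norm{w}_2^2$ forces you to control the negative part of $d\beta'$ against $\norm{w}_{H^1}^2$, and the hypotheses ($\beta'\le K$ distributionally, $\int_\R\beta_-<\infty$) do not give you that: the negative part of $d\beta'$ is not controlled in total mass, nor even locally uniformly unless $\beta$ is bounded, and the Theorem places no upper bound on $\beta$ (it may grow linearly and drop repeatedly by large amounts while remaining nonnegative, so that $\int\beta_-=0$ yet $(\beta')_-$ is an unbounded measure). The trace estimate of Lemma~\ref{embedding} absorbs a single Dirac mass at $0$, which is all that arises in Corollary~\ref{cor-pulled}, but it does not close the argument at the stated level of generality. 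The paper sidesteps coercivity entirely with a weighted Nash-type inequality: set $c:=\int_\R\beta_-$ and $W(z):=\int_0^z\beta_+ -c$, so that $W'=\beta_+\ge 0$ and $V-2c\le W\le V$. Then
\begin{align*}
e^{-2c}\norm{\nu(t)^2e^{V}}_\infty\le\norm{\nu(t)^2e^{W}}_\infty\le\int_\R\left|\dz\left(\nu^2e^{W}\right)\right|dz\le 2\int_\R|\nu||\dz\nu|e^{W}dz+\int_\R\beta_+\nu^2e^{W}dz,
\end{align*}
and since $\int_\R\beta_+\nu^2e^{W}dz=\int_\R(e^{W})'\nu^2dz=-2\int_\R e^{W}\nu\dz\nu\,dz$, the last term is again bounded by $2\int_\R|\nu||\dz\nu|e^{W}dz$; Cauchy--Schwarz gives $\norm{\nu(t)e^{V/2}}_\infty^2\le 4e^{2c}\norm{\nu(t)}_{L^2(e^Vdz)}\norm{\dz\nu(t)}_{L^2(e^Vdz)}\to 0$, using only that $\norm{\nu}$ is bounded and $\norm{\dz\nu}\to 0$, both of which your Steps (i)--(ii) already supply. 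Replacing your Step (iii) by this estimate makes the proof work under the hypotheses as stated.
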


Next, we apply Theorem \ref{ThmPulled} to establish the pulled nature of the waves in the two mentioned cases (small bias case $\chi\leq 1$, or $\s>\s^*$).

\begin{cor}
\label{cor-pulled}
Suppose that $\chi\leq 1$ and that $\s=\s_{\text{F/KPP}}=2$ and consider a neutral fraction $\nu$ of the associated wave profile that satisfies Condition (\ref{CondDI}). In particular, any function $\nu^0\in L^\infty(\R)$ such that $z\mapsto \left(\nu^0(z)z\right)^2 $ is integrable at $z=+\infty$ satisfies Condition (\ref{CondDI}). Then $\nu(t)$ converges to $0$ uniformly on intervals of the form $[a,+\infty)$, with $a\in \R$.\\
The same results holds true if we suppose that $\s>\s^*$. In particular, any function $\nu^0\in L^\infty(\R)$ such that $z\mapsto \left(\nu^0(z)\right)^2e^{(\s-2\mu_-(\s))z}$ is integrable at $z=+\infty$ satisfies Condition (\ref{CondDI}).
\end{cor}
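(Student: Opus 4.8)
The plan is to recognise the neutral-fraction equation \eqref{neutralfraction} as an instance of \eqref{EntEqn}, to check that the drift $\beta$ of the relevant wave profile satisfies the two hypotheses of Theorem~\ref{ThmPulled}, and finally to make the integrability condition \eqref{CondDI} explicit by computing $e^V$ from the profiles of Theorem~\ref{thmparwave}.

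First I would write $\beta$ out. Since $\r^\s$ is constant on $\R_-$, the definition gives $\beta(z)=\s-\chi>0$ for $z<0$ (recall $\s>\chi$ from Theorem~\ref{thmparwave}), whereas for $z>0$ one has $\beta(z)=\s+2(\ln\r^\s)'(z)$. In the small bias case $\chi\le1$, $\s=\s_\text{F/KPP}=2$, Theorem~\ref{thmparwave} gives $\r^\s(z)=A\bigl((1-\chi)z+1\bigr)e^{-z}$ on $\R_+$, so there
\[
\beta(z)=\frac{2(1-\chi)}{(1-\chi)z+1}\ge 0,\qquad \beta'(z)\le 0.
\]
In the case $\s>\s^*$, $\r^\s$ on $\R_+$ is a combination of $e^{-\mu_-(\s)z}$ and $e^{-\mu_+(\s)z}$ whose leading coefficient is strictly positive (this is exactly the nonnegativity condition from Theorem~\ref{thmparwave}), so $\r^\s$ is bounded below on $\R_+$ by a positive multiple of $e^{-\mu_-(\s)z}$; consequently $(\ln\r^\s)'$ is bounded and monotone, running from $-\chi$ at $z=0^+$ (by the $C^1$-jump relation) to $-\mu_-(\s)$ at $z=+\infty$, and $(\ln\r^\s)''$ is bounded. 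Hence $\beta$ is bounded, tends to $\sqrt{\s^2-4}>0$ as $z\to+\infty$, and is negative at most on a bounded sub-interval of $\R_+$.

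From this the two structural conditions of Theorem~\ref{ThmPulled} follow immediately. On $\R_-$, $\beta$ is a positive constant, so $\beta'=0$ and $\beta_-=0$ there; across $z=0$ the jump of $\beta$ equals $\beta(0^+)-\beta(0^-)=-\chi<0$, contributing a nonpositive Dirac mass; on $\R_+$, $\beta'$ is bounded above (it is $\le0$ when $\s=2$, and equals $2(\ln\r^\s)''$, which is bounded, when $\s>\s^*$). Thus $\beta'\le K$ in the sense of distributions. Likewise $\beta_-\equiv0$ when $\s=2$, while for $\s>\s^*$ it is bounded with bounded support, so $\int_\R\beta_-<\infty$ in every case. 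It remains to compute $e^V$, $V'=\beta$. On $\R_-$, $V$ is affine with positive slope $\s-\chi$, so $e^V$ decays exponentially at $-\infty$ and $\int_{\R_-}(\nu^0)^2 e^V<\infty$ for any $\nu^0\in L^\infty(\R)$. On $\R_+$, $V'=\s+2(\ln\r^\s)'$ gives $e^{V}=Ce^{\s z}(\r^\s)^2$: for $\s=2$ this is $C\bigl((1-\chi)z+1\bigr)^2$, hence $\int_{\R_+}(\nu^0)^2 e^V<\infty$ whenever $z\mapsto\bigl(\nu^0(z)z\bigr)^2$ is integrable at $+\infty$; for $\s>\s^*$, expanding $(\r^\s)^2$ and using $\mu_+(\s)+\mu_-(\s)=\s$ yields $e^V\le C\bigl(e^{(\s-2\mu_-(\s))z}+1\bigr)$ on $\R_+$, so (since $e^{(\s-2\mu_-(\s))z}\ge1$ there) $\int_{\R_+}(\nu^0)^2 e^V<\infty$ whenever $\nu^0\in L^\infty(\R)$ and $z\mapsto(\nu^0(z))^2 e^{(\s-2\mu_-(\s))z}$ is integrable at $+\infty$. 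In each case $\nu^0$ satisfies \eqref{CondDI}, and Theorem~\ref{ThmPulled} gives $\norm{\nu(t)e^{V/2}}_\infty\to0$, hence the claimed uniform convergence of $\nu(t)$ to $0$ on $[a,+\infty)$.

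The obstacle here is purely technical: one must correctly account for the distributional derivative of $\beta$ at the interface $z=0$ (the negative Dirac mass is harmless but cannot be ignored), and, in the regime $\s>\s^*$ with $\chi>1$, verify carefully that although $\beta$ may become negative near $z=0$ this happens only on a bounded set, so that $\beta_-$ remains integrable; everything else reduces to the explicit evaluation of $e^V$ from the wave profiles of Theorem~\ref{thmparwave}.
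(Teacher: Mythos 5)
Your proof is correct and follows essentially the same route as the paper's: identify the neutral-fraction equation as an instance of \eqref{EntEqn}, verify the two structural hypotheses of Theorem~\ref{ThmPulled} (the distributional upper bound on $\beta'$, including the negative jump $\beta(0^+)-\beta(0^-)=-\chi$ at the interface, and the integrability of $\beta_-$), and then compute $e^V=Ce^{\s z}(\r^\s)^2$ on $\R_+$ from the explicit profiles of Theorem~\ref{thmparwave} to translate Condition~\eqref{CondDI} into the stated integrability requirements. Your treatment of the $\s>\s^*$ case is, if anything, slightly more explicit than the paper's about why $\beta_-$ has bounded support, but the argument is the same in substance.
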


\begin{proof}
We recall that $\beta(z)=\s-\chi \mathbbm{1}_{z\leq0}+2\frac{\dz\r^\s}{\r^\s}$. \\
In the case where $\chi\leq 1$ and $\s=2$, we have from Theorem \ref{thmparwave} that $\r^\s(z)=\left\{\begin{array}{ll}
1 & \text{ if }z\leq0\\
((1-\chi)z+1)e^{-z} & \text{ if }z>0
\end{array} \right.$, which leads to $\beta(z)= \left\{ \begin{array}{ll}
2-\chi & \text{ if }z \leq 0 \\
\frac{2(1-\chi)}{(1-\chi)z+1} & \text { if } z> 0 
\end{array} \right.$. $\beta'$ is clearly bounded everywhere except at $z=0$, but $\beta(0^-)=2-\chi$ and $\beta(0^+)=2(1-\chi)$. This leads to $\beta'$ being bounded above in the sense of distributions and $\beta\geq 0$, hence Theorem \ref{ThmPulled} applies. Furthermore, by an easy computation up to a mulitplicative constant we have that $e^{V(z)}=\left\{ \begin{array}{ll}
e^{(2-\chi)z} & \text{ if }z \leq 0 \\
((1-\chi)z+1)^2 & \text { if } z> 0 
\end{array} \right.$, hence a bounded function $\nu^0\in L^\infty(\R) $ satisfies Condition (\ref{CondDI}), if $\left(\nu^0(z)z\right)^2$ is integrable at $z=+\infty$. 

In the case where $\s>\s^*$, we have $\r^\s(z)=\left\{\begin{array}{ll}
1 & \text{ if }z\leq0\\
\frac{\mu_+-\chi}{\s-\chi}e^{-\mu_- z}-\frac{\mu_--\chi}{\s-\chi}e^{-\mu_+ z}  & \text{ if }z>0
\end{array} \right.$, where we recall that $\mu_\pm=\frac{\s\pm \sqrt{\s^2-4}}{2} $. This leads to $\beta(z)= \left\{ \begin{array}{ll}
\sigma-\chi & \text{ if }z \leq 0 \\
\s - 2\frac{(\mu_+-\chi)\mu_-e^{-\mu_-z}-(\mu_--\chi)\mu_+e^{-\mu_+z}}{(\mu_+-\chi)e^{-\mu_-z}-(\mu_--\chi)e^{-\mu_+z}} & \text { if } z> 0 
\end{array} \right.$. By the same reasoning $\beta'$ is bounded above. For $z\leq0, \beta(z)\geq0$ and $\lim_{z\to +\infty}\beta(z)= \s-2\mu_->0$, which establishes that $\int_\R \beta_-(z)dz<+\infty$ and hence Theorem \ref{ThmPulled} applies. Finally, we have up to a multiplicative constant that $e^{V(z)}=\left\{ \begin{array}{ll}
e^{(\s-\chi)z} & \text{ if }z \leq 0 \\
e^{\s z}\left(\r^\s(z)\right)^2 & \text { if } z> 0 
\end{array} \right.$, hence a bounded function $\nu^0\in L^\infty(\R) $ satisfies Condition (\ref{CondDI}), if $\left(\nu^0(z)\right)^2e^{(\s-2\mu_-)z}$ is integrable at $z=+\infty$. 
\end{proof}

Before moving on to the proof of Theorem \ref{ThmPulled}, let us make some comments. Notice that Corollary \ref{cor-pulled} does not apply to the neutral fraction $\nu^0\equiv 1$, which is consistent with the fact that the neutral fraction $\nu\equiv 1$ stays constant and does not converge to $0$. In fact, Condition (\ref{CondDI}) or its counterparts in Corollary \ref{cor-pulled} may be seen as a characterization of an initial datum $\nu^0$, that constitutes a negligible part of the leading edge of the traveling wave. If, for the sake of the argument, we were to accept this property as a definition, then Theorem \ref{ThmPulled} or Corollary \ref{cor-pulled} tell us that neutral fractions, which constitute a negligible part of the leading edge of the traveling wave, go extinct in the traveling wave, \textit{i.e} they converge to $0$.\\
Nevertheless, just like in the work \cite{garnier2012}, Theorem \ref{ThmPulled} does not give any rate of convergence, contrary to Theorem \ref{thmpushed}, and this remains an open question. Finally let us note, that unsurprisingly in the case of large bias with $\s=\s^*$, for $\beta(z)$ is bounded above by a negative constant for $z>0$, which establishes that $\int \beta_-(z)=+\infty$ and thus Theorem \ref{ThmPulled} conversely does not apply to that case.

We now move on to the proof of Theorem \ref{ThmPulled}.

\begin{proof}
Consider $ L:= -\dzz-\beta(z)\dz$ in the weighted space $L^2(e^Vdz)$ with domain $D({L})=H^2(e^Vdz)$. From the arguments of the proof of Theorem \ref{thmpushed}, we have that $ L$ is self-adjoint and it furthermore satisfies the following property for $f\in D( L)$:
\begin{align*}
\int_\R f ( L f) e^V dz&=
\int_\R f (- f'' -\beta  f')e^V \dz\\
&= \int_\R \left(f'^2+\beta f f'  -\beta f f' \right)e^Vdz\\
&=\norm{f'}_{L^2(e^V dz)}^2.
\end{align*}
This leads to the following dissipation rate:
\begin{align}\label{DissipRate}
\frac{d}{dt}\left(\frac{1}{2}\norm{\nu}_{L^2(e^V dz)}^2 \right)= - \norm{\dz \nu}_{L^2(e^V dz)}^2 .
\end{align}
Furthermore:
\begin{align*}
&\frac{d}{dt}\left(\frac{1}{2}\norm{\dz  \nu }_{L^2( e^V  dz)}^2 \right)\\
=&\int_\R \dz(\dt  \nu ) \dz  \nu   e^V  dz\\
=&-\int_\R \dt  \nu  \dz( e^V \dz  \nu  ) dz\\
=& -\int_\R  \dzz  \nu  \dz( e^V \dz  \nu  ) dz - \int_\R \beta \dz  \nu  \dz( e^V \dz  \nu  ) dz \\
=& -  \norm{\dzz  \nu }_{L^2( e^V dz )}^2-\frac{1}{2}\int_\R \dz \left(\left(\dz  \nu \right)^2 \right)\beta e^V dz
-\int_\R (\dz  \nu )^2\beta^2 e^V dz-\frac{1}{2}\int_\R\beta\dz \left(\left(\dz  \nu \right)^2 \right) e^V  dz\\
=& -  \norm{\dzz  \nu }_{L^2( e^V dz )}^2
-\int_\R \left( \dz((\dz \nu)^2)\beta e^V +  (\dz \nu)^2 \beta^2 e^V \right)  dz\\
=& -  \norm{\dzz  \nu }_{L^2( e^V dz )}^2
-\int_\R \left( \dz \left((\dz \nu)^2 \beta e^V \right) - (\dz \nu)^2 \beta' e^V\right)  dz\\
=& -  \norm{\dzz  \nu }_{L^2( e^V dz )}^2+\int_\R \left(\dz  \nu \right)^2 \beta' e^V   dz.
\end{align*}
For $K>0$ such that $\beta'\leq K$, we have that:
\begin{align*}
\frac{d}{dt}\left(\frac{K}{2}\norm{ \nu }_{L^2( e^V  dz)}^2+\frac{1}{2}\norm{\dz  \nu }_{L^2( e^V  dz)}^2 \right)
=- \norm{\dzz  \nu }_{L^2( e^V dz )}^2+\int_\R \left(\dz  \nu \right)^2 \left(\beta'-K \right)  e^V  dz\leq 0.
\end{align*}
Hence $\norm{  \nu (t)}_{L^2( e^V  dz)}$ converges to a limit and so does $\norm{\dz  \nu (t)}_{L^2( e^V  dz)}$. But by Equation (\ref{DissipRate}), the limit of $\norm{\dz  \nu (t)}_{L^2( e^V  dz)}$ can only be $0$, otherwise $\norm{  \nu (t)}_{L^2( e^V  dz)}$ could not converge. \\
Finally, set $c:= \int_{-\infty}^{+\infty}\beta_-$ and $W(z)=\int_0^z \beta_+-c$. If we fix the constant of integration of $V$ such that $V(z)=\int_0 ^z\beta$. Then we have the following inequality:
\begin{align*}
V-2c \leq W \leq V.
\end{align*}
We can conclude by the following argument:
\begin{align*}
e^{-2c}\norm{ \nu (t)^2 e^V }_{\infty}&\leq \norm{ \nu (t)^2e^W}_{\infty}\\
&\leq  \int_\R \left|\dz\left( \nu ^2 e^W\right)\right|dz\\
& \leq 2\int_\R | \nu ||\dz  \nu | e^W dz + \int_\R |\beta_+e^W| \nu ^2dz\\
& \leq 2\int_\R | \nu ||\dz  \nu | e^W dz + \int_\R \left(e^W\right)' \nu ^2dz\\
& \leq 2\int_\R | \nu ||\dz  \nu | e^W dz -2\int_\R  \nu \dz  \nu  e^W dz\\
& \leq 4\int_\R | \nu ||\dz  \nu | e^W dz \\
&\leq 4 \norm{  \nu (t)}_{L^2(e^W dz)}\norm{\dz  \nu (t)}_{L^2(e^W dz)} \\
&\leq 4 \norm{  \nu (t)}_{L^2( e^V  dz)}\norm{\dz  \nu (t)}_{L^2( e^V  dz)},
\end{align*}
where at the end we have used Cauchy-Schwarz inequality, followed by the equivalence of norms. Hence $\lim_{t\to \infty} \sup_{z\in \R} | \nu (t,z)^2 e^V (z)| = 0$. Finally, since $\int_\R \beta_-(z)dz<+\infty$, for every $a\in \R$, we have that $\inf_{z\in[a,+\infty)}e^{V(z)}>0$ and therefore $\lim_{t\to+\infty}\sup_{z\in [a,+\infty)}|\nu(t,z)|=0$.
\end{proof}

\section{Asymptotic Spreading Properties}
\label{Sect-Asympt}

In this Section, we work under the hypothesis that the solution $(\r,N)$ of System (\ref{diffusivemodel:main}) is well-defined for all time, that $x \mapsto N(t,x)$ is an increasing function and we recall that $\bar{x}(t)$ is defined as the unique solution $N(t,\bar{x}(t))=N_\thresh$. As before, we consider the system in the moving frame of reference and study $(\r(t,z),N(t,z))$ solution of System (\ref{movingmodel:main}), droping the diacritical $\tilde{ }$ for the sake of concision. 

The aim of the Section is to describe the asymptotic behavior of $t\mapsto \bar{x}(t)$. In fact, we will show that under some assumptions on the initial datum, we have $\liminf_{t\to +\infty} \dot{\bar{x}}(t)\leq \s^* $ and $\limsup_{t\to +\infty} \dot{\bar{x}}(t)\geq \s^* $. The strategy of proof for both results is similar and is based on an argument by contradiction: if those properties were not satisfied, then this would first lead to an abnormal behavior of $\r$, which in turn contradicts the equation $N(t,0)=N_\thresh$. We have not been able to prove a stronger result, such as for instance $\lim_{t\to+\infty} \dot{\bar{x}}(t)=\s^* $ and we believe that in order to achieve such a result, one needs to study the behavior of $\r$ and $N$ simultaneously, which is much more involved than our present study. One major difficulty comes from the fact that System (\ref{diffusivemodel:main}) does not have a comparison principle.

\subsection{The Spreading may not be too fast}

\begin{thm}
\label{ThmTransientSlow}
Suppose that $\dot{\bar{x}}\in L^\infty(\R_+)$ and that:
\begin{align*}
 \frac{\r^0}{\r^{\s^*}} \in  L^\infty.
\end{align*} 
Then $\liminf_{t\to +\infty} \dot{\bar{x}}(t)\leq \s^* $.
\end{thm}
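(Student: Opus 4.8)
The plan is to argue by contradiction: assume $\liminf_{t\to+\infty}\dot{\bar x}(t) > \s^*$, so that there are $\delta>0$ and $T_0\ge 0$ with $\dot{\bar x}(t)\ge\s^*+2\delta$ for all $t\ge T_0$; then $\bar x$ is increasing on $[T_0,\infty)$, $\bar x(t)-\bar x(s)\ge(\s^*+2\delta)(t-s)$ for $T_0\le s\le t$, and $\bar x(t)/\sqrt t\to+\infty$. The idea is that a persistently supercritical interface speed forces the cell density near the front to thin out, so that $N$ can no longer be depleted there and $N(t,\bar x(t))$ must rise above $N_\thresh$, contradicting $N(t,\bar x(t))=N_\thresh$.

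\emph{Step one: trap $\r$ by a rescaling of the minimal wave riding on the interface.} Put $W:=\r^{\s^*}/\norm{\r^{\s^*}}_\infty$, so that $W(0)=1$, $0\le W\le 1$, $W$ decays exponentially ahead (with a harmless polynomial correction when $\chi\le 1$), and $W$ satisfies the jump $W'(0^+)-W'(0^-)=-\chi W(0)$. A direct computation — using exactly that $\s^*=\chi+1/\chi$ when $\chi>1$ and $\s^*=2$ when $\chi\le 1$ — shows that $\Phi(t,x):=c\,W(x-\bar x(t))$ is, for every $c>0$ and every interface with $\dot{\bar x}\ge\s^*$, a supersolution of Equation (\ref{ApproxSys:a}); the $C^1$-jump of $W$ is precisely what keeps the flux continuous across $x=\bar x(t)$, so no singular term appears. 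Since $\r^0\le\norm{\r^0/\r^{\s^*}}_\infty\,\norm{\r^{\s^*}}_\infty\,W$, I would first propagate this bound up to $T_0$ using a time-exponentially corrected version of $\Phi$ (licit because $\dot{\bar x}\in L^\infty$), then compare on $[T_0,\infty)$, via the comparison principle for the linear equation (\ref{ApproxSys:a}) with the interface curve frozen (proved by an energy estimate on $(\Phi-\r)_-$ as in the proof of Corollary \ref{cor-existence}). This yields a constant $C_1$ with $\r(t,x)\le C_1 W(x-\bar x(t))$ for $t\ge T_0$; in particular $\r$ is globally bounded and $\r(t,x)\le C_1 e^{-\chi(x-\bar x(t))}$ (up to a polynomial factor) for $x\ge\bar x(t)$.

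\emph{Step two: feed this into $N$.} By Duhamel in the static frame,
\[
N(t,\bar x(t))=e^{tD\dxx}N^0(\bar x(t))-\int_0^t e^{(t-s)D\dxx}\big(\r(s)N(s)\big)(\bar x(t))\,ds .
\]
The first term tends to $1$ since $\bar x(t)/\sqrt t\to+\infty$ and $N^0(+\infty)=1$; the part of the integral over $[0,T_0]$ tends to $0$ because there $\r(s,\cdot)$ is confined to a bounded window pushed distance $\gtrsim(\s^*+2\delta)t$ to the left of $\bar x(t)$, so the heat kernel kills it. For the part over $[T_0,t]$ I would show it also tends to $0$: the exact interface relation $\dot{\bar x}(t)\,\dx N(t,\bar x(t))=\r(t,\bar x(t))N_\thresh-D\dxx N(t,\bar x(t))$ together with a quasi-stationary analysis of $N$ on $\{x>\bar x(t)\}$ (where, by Step one, $N$ has a negligible source) shows that $\dx N(t,\bar x(t))\sim(1-N_\thresh)\dot{\bar x}(t)/D$ and, through that relation, $\r(t,\bar x(t))\to 0$; combining this with the exponential decay of $\r$ ahead of the front and a Gaussian-against-exponential estimate bounds the integrand by an integrable kernel times $o(1)$. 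Hence $N_\thresh=N(t,\bar x(t))\to 1>N_\thresh$, the desired contradiction.

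The main obstacle is making Step two rigorous: the supersolution of Step one by itself only yields $\r$ bounded near the front, which is not enough, and the genuinely delicate point is obtaining $t$-uniform parabolic control of $N$ and its first two derivatives just ahead of $\bar x(t)$ — enough to justify the quasi-stationary profile and conclude $\r(t,\bar x(t))\to 0$. This is where the coupling between $\r$ and $N$ must be handled with care, since there is no comparison principle for the full system; the contradiction scheme is designed precisely to sidestep that by only ever comparing inside the decoupled linear pieces, once the interface curve is treated as given.
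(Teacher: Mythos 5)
Your Step one, as stated, establishes only a bound $\r(t,x)\le C_1 W(x-\bar x(t))$, which gives $\r(t,\bar x(t))\le C_1$ but not $\r(t,\bar x(t))\to 0$; you recognize this yourself at the end, but the fix you sketch (the ``quasi-stationary analysis of $N$'' ahead of the front) is exactly the part that is hard to make rigorous and, as written, is not an argument. The paper's proof differs from yours precisely at the point where yours stalls. Instead of comparing with $\r^{\s^*}$ at the critical speed, the paper compares $v:=\r/\r^{\s^*+\delta}$ with the wave profile at the \emph{strictly supercritical} speed $\s^*+\delta$. In the moving frame, $v$ satisfies a linear parabolic equation whose zeroth-order coefficient $\gamma$ is nonpositive exactly because $\dot{\bar x}\ge\s^*+\delta$, so $v$ is a subsolution of the drift-diffusion equation without source. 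A super-solution $\bar v$ with a monotone decreasing initial datum is then shown to be a \emph{sub}solution of the autonomous operator $\dt-\dzz-\beta^{\s^*+\delta}\dz$, and Corollary~\ref{cor-pulled} (the pulled-front extinction result, since $\s^*+\delta>\s^*$) yields genuine decay $v\to 0$ uniformly on $[a,+\infty)$. This is the key step your proposal is missing: the strict excess $\delta$ in the interface speed buys you the comparison with a wave profile of slower exponential decay, and hence membership in the weighted $L^2$-space where neutral fractions go extinct. Comparison with $\r^{\s^*}$ (your $W$) throws this information away, because $\r^0/\r^{\s^*}\in L^\infty$ does \emph{not} satisfy Condition~(\ref{CondDI}) for the critical wave, so the pulled-wave machinery cannot be invoked.

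The second half of your scheme also departs from the paper in a way that costs rigor. You try to extract a contradiction from a Duhamel formula for $N(t,\bar x(t))$ in the static frame and an asserted identity linking $\dot{\bar x}$, $\dx N$, $\dxx N$ and $\r$ at the interface, but you never establish the needed smallness of $\r(t,\bar x(t))$ nor control of $\dxx N(t,\bar x(t))$ uniformly in $t$. The paper avoids this entirely. Armed with the uniform decay $v\to 0$ on a half-line $[-h,+\infty)$ from Step one and the exponential tail of $\r^{\s^*+\delta}$, it produces a time $t_1$ after which $\r N\le\e e^{-\mu z}$ on $z\ge -h$. It then constructs an explicit time-independent sub-barrier $n_\infty$ solving $-\s^*\dz n_\infty-D\dzz n_\infty+\e e^{-\mu z}=0$ on $(-h,\infty)$ with $n_\infty(-h)=0$ and $n_\infty(0)>N_\thresh$, shows that the evolution $n$ from a truncated initial datum converges to $n_\infty$, and checks that $N$ is a supersolution of the same equation (using $\dot{\bar x}\ge\s^*$ and $\dz N\ge 0$). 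The comparison principle for this scalar linear equation then forces $N(t,0)\ge n(t,0)\to n_\infty(0)>N_\thresh$, which contradicts the interface condition. This is much cleaner than the Duhamel-in-static-frame route and, crucially, needs nothing beyond the decay estimate of Step one; in particular it never needs second-derivative control of $N$ at the interface. In short, your contradiction scheme is the right shape, but you should (i) compare with the supercritical wave $\r^{\s^*+\delta}$ and invoke the pulled-wave extinction result to upgrade boundedness to decay, and (ii) replace the Duhamel/quasi-stationary heuristics in Step two with a barrier comparison against an explicit stationary supersolution of a constant-coefficient drift-diffusion equation on a half-line.
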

\begin{proof}
1. We argue by contradiction and start by showing that $\r$ converges to $0$ uniformly on intervals of the form $[a,+\infty)$ for $a\in \R$.\\
Let $t_0\geq 0, \delta>0$, such that for $t\geq t_0, \dot{\bar{x}}(t)\geq  \s^*+\delta$. 
Set $v:=\frac{\r}{\r^{\s^*+\delta}}$, where $\r^{\s^*+\delta}$ is the traveling wave profile for speed $\s^*+\delta$ (see Theorem \ref{thmparwave}). We have that $v$ satisfies the following Equation:
\begin{align*}
\dt  v  - \dzz  v  -\beta(t,z)\dz  v  -\gamma(t,z)  v  = 0,
\end{align*}
with $\beta(t,z):=\dot{\bar{x}}(t)-\chi\mathbbm{1}_{z<0}+2\frac{\dz \r^{\s^*+\delta}}{\r^{\s^*+\delta}}$ and $\gamma(t,z):= (\dot{\bar{x}}(t)-(\s^*+\delta))\frac{\dz\r^{\s^*+\delta}}{\r^{\s^*+\delta}}$. By noticing that $(t,z)\mapsto \norm{\frac{\r^0}{\r^{\s^*}}}_\infty\exp\left(\norm{\frac{\dz \r^{\s^*+\delta}}{\r^{\s^*+\delta}}}_\infty\int_0^{t_0} (\s^*+\delta-\dot{\bar{x}}(s))_+ ds\right)$ is a super-solution to $\dt -\dzz -\beta(t,z)\dz-\gamma(t,z)$, we observe that $\frac{\r(t_0)}{\r^{\sigma^*}}\in L^\infty$. Hence we can suppose without loss of generality that for $t\geq0,\dot{\bar{x}}(t)\geq \s^*+\delta$. Therefore, we have that $\beta(t,z)\geq \s^*+\delta-\chi\mathbbm{1}_{z<0}+2\frac{\dz \r^{\s^*+\delta}}{\r^{\s^*+\delta}} =:{\beta^{\s^*+\delta}}(z)$ and $\gamma(t,z)\leq 0$. Moreover, by linearity we can suppose that $\norm{\frac{\r^0}{\r^{\s^*}}}_\infty\leq 1$.

Since $\gamma(t,z)\leq0$ and $ v \geq 0$, we have that $ v $ is a subsolution of $\dt - \dzz -\beta(t,z)\dz$. Hence we consider the solution $\bar{v}$ of $\dt - \dzz -\beta(t,z)\dz$, such that $ v \leq \bar  v $, with initial datum $\bar{v}^0$, which we define now. Set $\eta:=\frac{\mu_+(\s^*)-\mu_-(\s^*+\delta)}{2}$ and define $g(z)=\frac{e^{-\eta z}}{1+e^{-\eta z}}$. Let $K>0$ be big enough such that:
\begin{align*}
 \frac{\r^{0}(z)}{\r^{\s^*+\delta}(z)}\leq \frac{\r^{\s^*}(z)}{\r^{\s^*+\delta}(z)} \leq K g(z).
\end{align*}
Such a constant exists, as a consequence of Theorem \ref{thmparwave}, since for $z<0, \frac{\r^{\s^*}(z)}{\r^{\s^*+\delta}(z)} =1$ and for $z\to+\infty, \frac{\r^{\s^*}(z)}{\r^{\s^*+\delta}(z)} = O\left(ze^{-(\mu_+(\s^*)-\mu_-(\s^*+\delta))z}\right)=O(g(z))$. Thus, we set the initial datum $\bar  v ^0(z)=Kg(z)$. 

Let us show that $\dz \bar  v  \leq 0$. By construction, $\dz \bar  v ^0 \leq 0$. Furthermore $\bar{v}^0\in H^2_{\text{loc}}(\R,dz)$ and hence by standard regularity theory, we have that $\bar{v} \in C(\R_+,H^2_{\text{loc}}(\R,dz))$. Setting $w:=\dz \bar{v}$, combining the facts that $w(0,\cdot)\in H^1(\R,dz)$, $w\in C(\R_+,H^1_{\text{loc}}(\R))$ and that $w$ is solution to $\dt w -\dzz w- \dz(\beta w)=0$, shows in fact that $w\in C(\R_+,H^1(\R,dz))$. Now we can procede as in the proof of Corollay \ref{cor-existence}, to show that $w_+\equiv 0$, where $(\cdot)_+=\max(\cdot,0)$:
\begin{align*}
\frac{d}{dt}\left(\frac{1}{2}\int_\R w_+^2(t,z) \right)&=
\int_\R w_+ \dt w_+ dz \\
&=\int_\R w_+ \dt w dz \\
&=\int_\R w_+ ( \dzz w + \dz (\beta w))dz \\
&= -\int_\R \left(\dz w_+ \dz w +\beta w \dz w_+ \right) dz \\
&= -\int_\R (\dz w_+)^2 dz -\int_\R\beta w_+ \dz w_+ dz \\
&\leq  -\int_\R (\dz w_+)^2 dz +\frac{1}{2}\int_\R	 (\dz w_+)^2 dz +\frac{\norm{\beta}_\infty^2}{2}\int_\R	 ( w_+)^2 dz \\
&\leq \frac{\norm{\beta}_\infty^2}{2}\int_\R	 ( w_+)^2 dz,
\end{align*}
where we have applied arithmetic and geometric means inequality $|\beta w_+ \dz w_+| \leq \frac{(\dz w_+)^2}{2}+\frac{\norm{\beta}_\infty^2(w_+)^2}{2}$. Since $w_+(0,\cdot)\equiv 0$, by Grönwall's lemma, we have that $w_+(t,\cdot)\equiv 0$. Thus $\dz \bar{v}\leq 0$.

By the bound ${\beta}^{\s^*+\delta}\leq \beta$ and the inequality $\dz \bar{ v }\leq 0$, we have that $\bar{ v }$ is a sub-solution (and not a super-solution!) of the parabolic operator $\dt-\dzz-{\beta}^{\s^*+\delta}\dz$. But from Corollary \ref{cor-pulled}, we know that the corresponding solution with initial datum $\bar{v}^0(z)=Kg(z)$ converges to $0$ uniformly on intervals of the form $[a,+\infty)$. Hence so does $\bar{ v }$ and $ v $ by the comparison principle and thus $\r$ converges to $0$ uniformly on compact sets.\\

\begin{figure}
\begin{center}
\includegraphics[width=10cm]{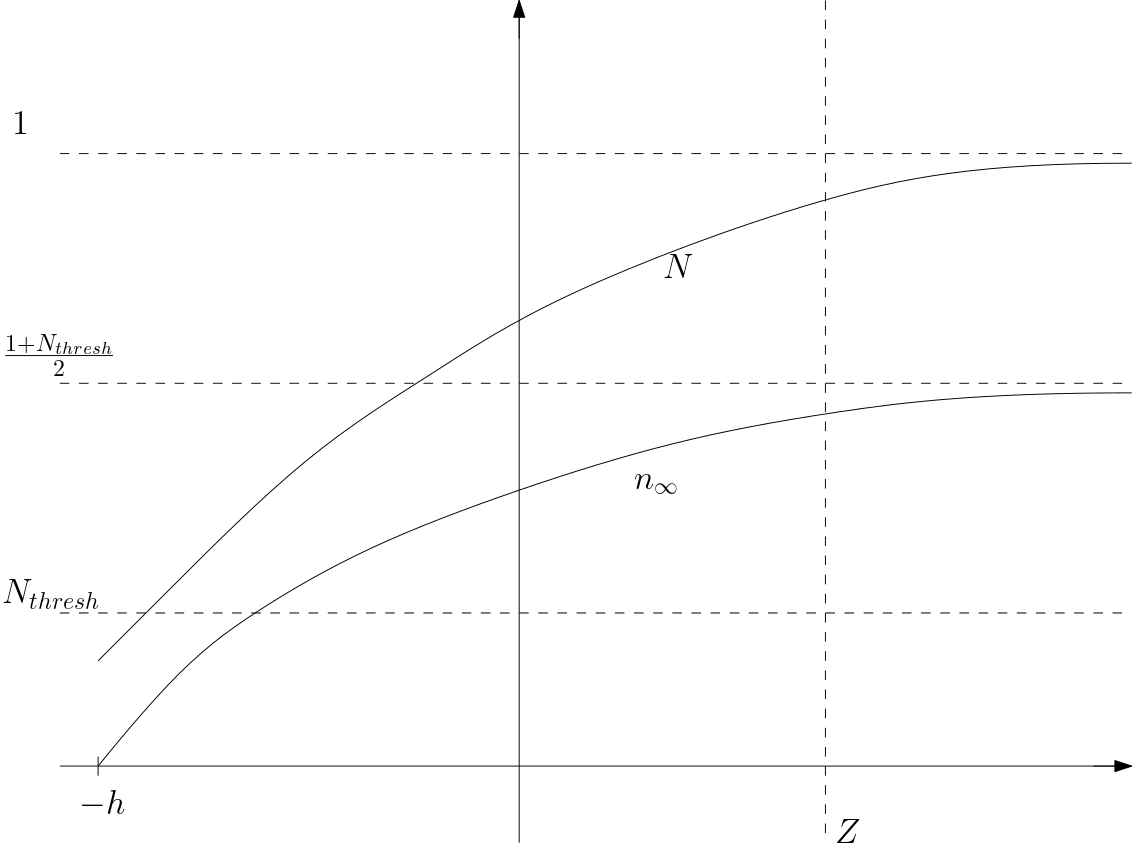}
\caption{In order to prove that condition $N(t,0)=N_\thresh$ cannot be satisfied, we consider solution $n(t,z)$ to Equation (\ref{EqnNFast}) with inital datum $n(t_1,z)=n_\infty(z)$ for $z\geq Z$ and $n(t_1,z)=0$ for $z\in[-h,Z)$. Then $n(t)\to n_\infty$ uniformly. Furthermore $N$ is a super-solution of Equation (\ref{EqnNFast}) and $N(t_1,\cdot)> n(t_1,\cdot) $, which leads to $N(t,\cdot) \geq n(t,\cdot)$ for all $t\geq t_1$. But $ n(t,0)\to n_\infty(0)>N_\thresh$ and thus the condition $N(t,0)=N_\thresh$ is not satisfied after some time.}
\end{center}
\end{figure}

2. Let us now show that the condition $N(t,0)=N_\thresh$ cannot be satisfied for time $t>0$ sufficiently large. Let $\mu>0$ such that $ \mu<\frac{\s^*}{D}$ and $\mu<\mu_-(\s^*+\delta)$. Let $\e>0$ be sufficiently small, such that $\frac{1+N_\thresh}{2}-\frac{\e}{D\mu\left(\frac{\s^*}{D}-\mu\right)}>N_\thresh$ and denote $B:=\frac{\e}{D\mu\left(\frac{\s^*}{D}-\mu\right)}$. Choose $h>0$ big enough such that $\frac{1+N_\thresh}{2}\left(1-e^{-\frac{\s^* h}{D}}\right)-B\left(1-e^{-\left(\frac{\s^*}{D}-\mu\right)h}\right)>N_\thresh$, which exists by noticing that in the limit $h\to +\infty$ the inequality is satisfied by the above. 

As $v$ converges to $0$ uniformly on the set $[-h,+\infty)$, and by noticing that $\r^{\s^*+\delta}(z)=o\left(e^{-\mu z}\right)$, there exists $t_1$, such that for $t\geq t_1$ and $z\geq -h$, we have $  \r(t,z)N(t,z) \leq   \r(t,z) \leq  \e e^{-\mu z}$. Finally let $Z>0$ be such that $N(t_1,Z)>\frac{1+N_\thresh}{2}$. \\
We consider the parabolic equation in the domain $\Omega=(t_1,+\infty)\times(-h,+\infty)$:
\begin{align}
\label{EqnNFast}
\dt n- \s^*\dz n -D\dzz n +\e e^{-\mu z}=0\\
n(t,-h)=0, n(t_1,\cdot)=n^{t_1}(\cdot). \nonumber
\end{align}
A stationary solution to Equation (\ref{EqnNFast}) is:
\begin{align*}n_\infty(z)=\frac{1+N_\thresh}{2}-Be^{-\mu z}+Ce^{-\frac{\s^*}{D}z},
\end{align*}
with $C:=-Be^{-\left(\frac{\s^*}{D}-\mu	\right)h} -\frac{1+N_\thresh}{2}e^{-\frac{\s^*}{D}h}   $. Notice that $n_\infty(0)>N_\thresh$, by the choices made above. \\
Consider now the solution $n$ to Equation (\ref{EqnNFast}) with initial datum $n(t_1,\cdot)=n_\infty \mathbbm{1}_{z\geq Z}$. We will show that $n(t,\cdot)\to n_\infty$ uniformly and in particular for $z=0$, $n(t,0)\to n_\infty(0)>N_\thresh$. \\
Indeed set $w:=n_\infty-n$, then $w$ satisfies for $(t,z) \in \Omega$:
\begin{align*}
\dt w - \s^*\dz w -D\dzz w=0\\
w(t,-h)=0, w(t_1,z)=n_\infty(z)\mathbbm{1}_{z<Z}.
\end{align*}
It can be verified that for $(t,z)\in \Omega$, we have the following explicit expression:
\begin{align*}
|w(t,z)|&=\left|\frac{e^{-\frac{{\s^*}^2}{4}t-\frac{\s^*}{2}z}}{\sqrt{4\pi D(t-t_1)}}\int_{-h}^{+\infty}\left(e^{-\frac{(z+h-y)^2}{4D(t-t_1)}}-e^{-\frac{(z+h+y)^2}{4D(t-t_1)}}\right)e^{\frac{\s^*}{2}y}n_\infty(y)\mathbbm{1}_{y\leq Z}dy\right|\\
&\leq \frac{Ce^{\frac{\s^*}{2}h}e^{-\frac{{\s^*}^2}{4}t}}{\sqrt{(t-t_1)}} \norm{e^{\frac{\s^*}{2}y}n_\infty(y)}_{L^1([-h,Z])}.
\end{align*}
Hence, $\lim_{t\to +\infty} w(t,z)=0$ uniformly, which is equivalent to establishing that $\lim_{t\to+\infty} n(t,z)=n_\infty(z)$ uniformly. In particular there exist $t_2>t_1$, such that $n(t_2,0)>N_\thresh$.

In the final step, it remains to show that $N$ is a supersolution of Equation (\ref{EqnNFast}). Indeed for $(t,z)\in \Omega$:
\begin{align*}
&\dt N-\s^* \dz N - D\dzz N + \e e^{-\mu z}\\
\geq & \dt N-\dot{\bar{x}}(t) \dz N - D\dzz N +   \r N\\
= & 0.
\end{align*}
By construction $N(t_1,\cdot)\geq n(t_1,\cdot)$ and $N(\cdot,-h)>0=n(\cdot,-h)$. Hence we have for all $(t,z)\in \Omega, N(t,z)\geq n(t,z)$ and in particular $N(t_2,0)\geq n(t_2,0)>N_\thresh$, which leads to a contradiction.

\end{proof}

\subsection{The Spreading may not be too slow}

In this Section, we will show that $\limsup_{t \to +\infty} \dot{\bar{x}}(t) \geq \s^*$. The proof is based on the observation that the norm of $\r$ increases exponentially in a certain weighted $L^1$-spaces, when $\dot{\bar{x}}<\s^*$. In fact, as a starter, in the large bias case $\chi>1$, take for example the weight $e^{\frac{z}{\chi}}$. By noticing that it is an eigenvalue of the dual of the elliptic operator, one sees that:
\begin{align*}
\frac{d}{dt}\int_\R e^{\frac{z}{\chi}}\r(t,z)dz = \frac{\chi+\frac{1}{\chi}-\dot{\bar{x}}(t)}{\chi} \int_\R e^{\frac{z}{\chi}}\r(t,z)dz.
\end{align*} 
Then if $\limsup_{t\to+\infty}\dot{\bar{x}}(t)<\s^*$, we have that $\int_\R e^{\frac{z}{\chi}}\r(t,z)dz$ grows exponentially. Nevertheless, since $e^{\frac{z}{\chi}}$ is unbounded, this observation is not sufficiently instructive. Hence, instead of $e^{\frac{z}{\chi}}$ an exact eigenvalue of the dual of the elliptic operator, we consider a supersolution $e^u\in L^\infty(\R)$ of the dual of the elliptic operator, which enables us to show that $\r(t)$ diverges to $+\infty$ in $L^1(\R,e^u dz)$. Then we show that a similar statement remains true for $\r$ in $L^1(\R_+,e^u dz)$ and finally this accumulation of mass on the half-line $\R_+$ leads to a contradiction on the condition $N(t,0)=N_\thresh$. 

\begin{thm}
\label{thm-slow-strong-bias}
Suppose that $\dot{\bar{x}}\in L^\infty(\R_+)$ and that $\r^0\in L^\infty$. Then, we have that $\limsup_{t \to +\infty} \dot{\bar{x}}(t) \geq \s^*$.
\end{thm}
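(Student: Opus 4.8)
The plan is to argue by contradiction: assume $\limsup_{t\to+\infty}\dot{\bar x}(t)<\s^*$, so that there are $\delta>0$ and $t_0\ge 0$ with $\dot{\bar x}(t)\le \s^*-\delta$ for $t\ge t_0$; after a time shift we take $t_0=0$. Here $\r,N$ denote the solution in the moving frame, so $N(t,0)=N_\thresh$ for all $t$ and $\r$ solves, in divergence form, $\dt\r-\dzz\r+\dz(\chi\1_{z\le 0}\r)-\dot{\bar x}(t)\dz\r=\1_{z>0}\r$, with the flux $-\dz\r+\chi\1_{z\le 0}\r$ continuous across $z=0$. Let $\mathcal L^*_t$ be the formal adjoint of the elliptic part acting on weights, $\mathcal L^*_t\phi=\dzz\phi+(\chi\1_{z\le 0}-\dot{\bar x}(t))\dz\phi+\1_{z>0}\phi$. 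The first step is to construct a \emph{bounded} positive weight $w=e^u\in L^\infty(\R)$, of class $C^1$ (or with a corner of favorable sign at $z=0$), decaying exponentially as $z\to-\infty$, and a supersolution of $\mathcal L^*_t$ uniformly in time: there is $\lambda>0$ with $\mathcal L^*_t w\ge \lambda w$ on $\R\setminus\{0\}$ for all $t$ with $\dot{\bar x}(t)\in[-\norm{\dot{\bar x}}_\infty,\s^*-\delta]$. In the large bias regime the prototype is the dual eigenfunction $e^{z/\chi}$, which is automatically $C^1$ across $z=0$ and satisfies $\mathcal L^*_t e^{z/\chi}=\tfrac{\s^*-\dot{\bar x}(t)}{\chi}e^{z/\chi}\ge \tfrac{\delta}{\chi}e^{z/\chi}$; being unbounded at $+\infty$, one replaces it there by a sufficiently slowly flattened version, the $\1_{z>0}w$ term compensating the small curvature and drift produced by the flattening. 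The small bias regime is handled by an exponential $e^{\kappa z}$ with $\kappa$ tuned to the roots $\mu_\pm$, flattened in the same way.

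Given such a $w$, multiply the $\r$-equation by $w$ and integrate over $\R$. Because the flux is continuous and $w$ is regular across $z=0$, the only interface contribution is $\llbracket \dz w\rrbracket_{z=0}\,\r(t,0)\ge 0$ (absent if $w\in C^1$); using $\mathcal L^*_t w\ge\lambda w$ and $\r\ge 0$ one gets $\frac{d}{dt}\int_\R w\,\r(t,\cdot)\,dz\ge \lambda\int_\R w\,\r(t,\cdot)\,dz$. Since $w$ decays exponentially at $-\infty$ and $\r$ decays exponentially at $+\infty$, $\int_\R w\r$ is finite at $t=0$, and positive because $\r\not\equiv 0$ (otherwise $N$ would solve the heat equation and tend to $N(+\infty)=1>N_\thresh$, contradicting $N(t,0)=N_\thresh$). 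Hence $\int_\R w(z)\r(t,z)\,dz\ge e^{\lambda t}\int_\R w\r^0\to+\infty$.

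It remains to contradict $N(t,0)=N_\thresh$. First localize the blow-up to the right half-line: on $\R_-$ the density obeys the source-free transport--diffusion equation $\dt\r+(\chi-\dot{\bar x})\dz\r-\dzz\r=0$, so $\int_{\R_-}w\r$ cannot, by a maximum-principle/duality estimate, sustain exponential growth of rate $\lambda$; consequently $\int_{\R_+}w\r\to+\infty$, and since $w\ge c>0$ on $\R_+$, $\int_0^{+\infty}\r(t,z)\,dz\to+\infty$. Moreover in the growth region cells are transported towards the interface (drift $\dot{\bar x}$ in the moving frame), so the mass accumulates near $z=0^+$: fixing a bump $\psi\ge 0$ supported in a small interval $(0,a)$ one gets $\int_0^{+\infty}\psi\,\r\to+\infty$. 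But then $\phi(t):=\int_0^{+\infty}\psi\,(1-N)$ is bounded ($0\le\phi\le(1-N_\thresh)\int\psi$, using $N_\thresh\le N\le 1$ on $\R_+$), while $\phi'(t)=-D\!\int\psi''N-\dot{\bar x}\!\int\psi'N+\int\psi\,\r N\ge -C+N_\thresh\!\int\psi\,\r\to+\infty$, so $\phi(t)\to+\infty$ --- a contradiction. Hence $\limsup_{t\to+\infty}\dot{\bar x}(t)\ge\s^*$.

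I expect the hard parts to be: (i) producing the bounded supersolution weight $w$ with a genuinely positive parameter $\lambda$, reconciling the exponential rate needed on $\R_-$, the regularity required across the interface, and boundedness at $+\infty$ simultaneously; (ii) the localization from $\int_\R w\r$ to $\int_{\R_+}w\r$ and then to a fixed window near the interface, which must exploit that the $\R_-$-dynamics and the inward transport in $\R_+$ prevent the mass from escaping; and (iii) making the final comparison with the $N$-equation rigorous, since System (\ref{diffusivemodel:main}) has no comparison principle, so all control of $N$ has to come from energy/duality estimates rather than sub/supersolutions of the coupled system.
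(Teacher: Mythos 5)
Your Step 1 (constructing a bounded weight $e^u$, $C^1$ across $z=0$, that is a dual supersolution uniformly in $t$ by flattening the exponential $e^{z/\chi}$ for large $z>0$) matches the paper's construction closely, including the use of the carefully tuned flattening; the paper's $\mu(z)$ interpolates from $1/\chi$ on $\R_-$ down to $-1/\norm{\dot{\bar x}}_\infty$ for $z>B$ using a function $\theta_\e$ with $\theta_\e'(0)=\theta_\e'(1)=0$ and $\sup\theta_\e'(z)z\le\e$, which is exactly what keeps $\gamma(t,z)\ge\eta>0$ uniformly. Where your plan diverges — and where I see two genuine gaps — is in Steps 2 and 3.

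In Step 2 you claim that on $\R_-$ the equation is source-free, so $\int_{\R_-}w\r$ cannot, ``by a maximum-principle/duality estimate,'' sustain exponential growth of rate $\lambda$, hence the blow-up must localize in $\R_+$. This is not sound as stated: the left half-line is not a closed system, and mass flows continuously from $\R_+$ to $\R_-$ through the interface. Indeed, on $\R_-$ with the chosen weight one still has $w''+(\chi-\dot{\bar x})w'\ge(\delta/\chi)w$, so the dual operator is not dissipative there; the only obstruction to exponential growth of $\int_{\R_-}w\r$ would have to come from controlling the flux at $z=0$, and you do not do this. The paper proves something much weaker and carefully targeted: it bounds the moment $I(t)=\int_{\R_-}(e^{-z/2\chi}-1+z/(2\chi))\,\omega\,dz$ by a Grönwall argument, deduces via Markov's inequality that for $h$ large $\int_{-\infty}^{-h}\omega\le 1/2$, and then invokes a parabolic Harnack inequality on a fixed compact box $(-h,h)$ to show $\int_{t-4}^{t}\int_{\R_+}\omega\,dz\,ds$ is bounded below by a positive constant. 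No pointwise localization of the blow-up to $\R_+$ is claimed, only this averaged, time-integrated lower bound.

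In Step 3 you want $\int_0^{+\infty}\psi\,\r\to+\infty$ for a \emph{fixed} bump $\psi$ supported in a small interval $(0,a)$, on the grounds that the drift in the moving frame pushes mass towards the interface. This is substantially stronger than what Step 2 can give (even in the paper's version, only a time-averaged lower bound on $\int_{\R_+}\omega$, not on $\int_0^a\omega$) and you give no argument that mass cannot leak across $z=0$ into $\R_-$ or spread by diffusion so as to keep $\int_0^a\r$ bounded. The paper sidesteps this entirely: since $e^u\in L^1(\R_+)$, one multiplies the $N$-equation by $e^u$ and integrates over $\R_+$, getting $\frac{d}{dt}\int_{\R_+}e^u N\le C-N_\thresh\int_{\R_+}\omega\,e^{\eta(t-t_0)}$; integrating over a window $[t-4,t]$ and using Step 2's lower bound forces $\int_{\R_+}e^u N(t,\cdot)\to-\infty$, contradicting $N\ge 0$. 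This avoids any need to localize the mass to a fixed compact window. Your Step 3 computation of $\phi'$ also has some sign slips, but the real issue is that its key input $\int\psi\,\r\to\infty$ is not established.

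In short: the weight construction is the right idea and essentially the paper's, but the way you try to propagate exponential growth from $\int_\R w\r$ to a controlled region near the interface is where the proof would fail; the paper replaces your localization heuristic by a moment bound on $\R_-$, a Markov inequality, a parabolic Harnack inequality, and a weighted-$L^1$ estimate on $N$ over all of $\R_+$.
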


\begin{proof}
Suppose by contradiction that $\limsup_{t \to +\infty} \dot{\bar{x}}(t) <\s^*$ ($=2$ in the small bias case $\chi\leq 1$, $=\chi+\frac{1}{\chi}$ in the large bias case $\chi>1$). Then there exists $\delta >0, t_0\geq 0 $, such that for $t\geq t_0, \dot{\bar{x}}(t) \leq \s^*-\delta$. \\

1. We start by constructing a function $e^u\in L^\infty(\R)$, such that the quantity $\int_\R e^{u(z)} \r(t,z)dz$ tends to infinity. Given a function $u\in C^2(\R)$, set $w(t,z):=e^{u(z)}\r(t,z)$, then:
\begin{align}
\label{EQN-w}
\dt w -\dzz w-\dz\left(\underbrace{\left(\dot{\bar{x}}(t) - 2u'-	\chi \mathbbm{1}_{z\leq 0}\right)}_{=:\beta(t,z)} w \right) - \underbrace{\left(u'^2+u''-\dot{\bar{x}}(t)u' + \mathbbm{1}_{z> 0} + \chi \mathbbm{1}_{z\leq 0} u' \right)}_{=:\gamma(t,z)}w.
\end{align}
We now show that we can construct a function $u$ such that $\gamma$ is bounded below by a positive constant $\eta>0$. $u$ will be of the shape $u(z)=\mu(z)z$ with $\mu(z)=\frac{1}{\chi}$ for $z\leq 0$, $\mu(z)=-\frac{1}{\norm{\dot{\bar{x}}}_\infty}$ for $z>B$, with $B>0$, and $\mu$ will decrease slowly on the interval $[0,B]$. The key ingredients for the construction of the function $u$ are Bounds (\ref{AuxBd1},\ref{AuxBd2}) and Lemma \ref{lemma-techical}.

First notice that for $t\geq t_0$ and by assuming that $2\eta<\frac{\delta}{\chi}$, we have the following Bound:
\begin{align}
\label{AuxBd1}
\left(\frac{1}{\chi}\right)^2-\dot{\bar{x}}(t)\left(\frac{1}{\chi}\right)+\chi \left(\frac{1}{\chi}\right)=\frac{\chi+\frac{1}{\chi}-\dot{\bar{x}}(t)}{\chi}\geq	\frac{\delta}{\chi}.
\end{align}

For the second bound we introduce the functions $g_t:\mu \in I:=\left[-\frac{1}{\norm{\dot{\bar{x}}}_\infty},\frac{1}{\chi} \right]\mapsto \mu^2-\dot{\bar{x}}(t)\mu+1$ and show that it is uniformly in time bounded below by $2\eta$ for each $\dot{\bar{x}}(t)$. Without loss of generality we suppose that $\norm{\dot{\bar{x}}}_\infty>1$. The minimum of $g_t$ on $\R$ is reached for $\mu=\frac{\dot{\bar{x}}(t)}{2}$ and its minimal value is $1-\frac{\dot{\bar{x}}(t)^2}{4}$. 
\begin{itemize}
\item If $\frac{\dot{\bar{x}}(t)}{2}<-\frac{1}{\norm{\dot{\bar{x}}}_\infty}$ then the minimum of $g_t$ on its domain $I$ is $g_t\left( -\frac{1}{\norm{\dot{\bar{x}}}_\infty}\right)=\frac{1}{\norm{\dot{\bar{x}}}_\infty^2}-\frac{\dot{\bar{x}}}{\norm{\dot{\bar{x}}}_\infty}+1\geq \frac{1}{\norm{\dot{\bar{x}}}_\infty^2}  $. 
\item If $\frac{\dot{\bar{x}}(t)}{2}>\frac{1}{\chi}$ then the minimum of $g_t$ is $g_t\left(\frac{1}{\chi}\right)\geq \frac{\delta}{\chi}  $. 
\item Else if $\frac{\dot{\bar{x}}(t)}{2}\in I$, then: (i) in the large bias case $\chi>1$, we have that $g_t(\mu)\geq 1-\min\left( \frac{1}{\norm{\dot{\bar{x}}}_\infty^2},\frac{1}{\chi^2} \right)>0$. (ii) In the small bias case $\chi\leq 1$, we know that $\dot{\bar{x}}(t)\leq 2-\delta$ and $\dot{\bar{x}}(t)\geq -\frac{2}{\norm{\dot{\bar{x}}}_\infty}>-2$. Hence the minimum of $g_t$, which is $1-\frac{\dot{\bar{x}}(t)^2}{2}$, is in that case positive. 
\end{itemize}
Finally this leads for $\eta>0$ small enough and for every $t\geq t_0$ to the Bound:
\begin{align}
\label{AuxBd2}
\min_{\mu \in I} g_t(\mu)\geq 2\eta.
\end{align}

Next, we introduce the following technical Lemma \ref{lemma-techical}, which will be an essential ingredient for the construction of $u(z)=\mu(z)z$ with the function $z\mapsto \mu(z)$ varying slowly in the interval $I$, such that $\gamma $ will stay bounded below by $\eta$.
\begin{lemma}\label{lemma-techical}
For every $\e>0$, there exists a nondecreasing function $\theta_\e\in W^{2,\infty}([0,1], [0,1])$ such that $\theta_\e(0)=0$, $\theta_\e'(0)=0$, $\theta_\e(1)=1$ and $\theta_\e'(1)=0$, and that:
\begin{align*}
\sup_{z\in[0,1]} \theta_\e'(z)z \leq \e.
\end{align*}
\end{lemma}
Let us give a quick proof of Lemma \ref{lemma-techical}.
\begin{proof}
Consider the function $f (z)=z^\e$. Then $f'(z)z=\e z^{\e-1}$. Hence $\sup_{z\in[0,1]} f'(z)z \leq \e$. Set:
\begin{align*}
\theta_\e:z\mapsto \left\{ 
\begin{array}{ll}
\e^{(\e-3)\left(1+\frac{1}{\e}\right)}\left(\e^{1+\frac{1}{\e}}(3-\e)z^2-(2-\e)z^3 \right) & \text{if } z\in\left[0,\e^{1+\frac{1}{\e}}\right) \\
 f(z) & \text{if } z\in[\e^{1+\frac{1}{\e}},1-\e) \\
g(z)& \text{if } z\in[1-\e,1]
\end{array}
\right.,
\end{align*}
where $g$ is any concave $C^1$ function such that $g(1-\e)=f(1-\e)$, $g'(1-\e)=f'(1-\e)$, $g(1)=1$ and $g'(1)=0$. Notice that $g'(1-\e)\geq g'(1)$, hence such a concave function $g$ exists.

First notice that by construction $\theta_\e(0)=\theta_\e'(0)=0$, $\theta_\e\left(\left(\e^{1+\frac{1}{\e}}\right)^-\right)=f\left(\e^{1+\frac{1}{\e}}\right)$ and $\theta_\e'\left(\left(\e^{1+\frac{1}{\e}}\right)^-\right)=f'\left(\e^{1+\frac{1}{\e}}\right)$. By straightforward computations, we show that $\theta_\e$ is increasing on the interval $\left[0,\e^{1+\frac{1}{\e}}\right)$ and that $\theta_\e'(z)z$ reaches its maximum on the interval $\left[0,\e^{1+\frac{1}{\e}}\right)$ at point $z_\e:=\left(\frac{2}{3}\right)^2\frac{3-\e}{2-\e}\e^{1+\frac{1}{\e}}$, with value:
\begin{align*}
\theta_\e'(z_\e)z_\e &=\left(\frac{2}{3}\right)^5\frac{(3-\e)^3}{(2-\e)^2}\e^{\e+1} \\
&\leq \left(\frac{2}{3}\right)^5\frac{3^3}{1^2}\e^{\e}\e \\
&\leq \frac{32}{9}\e.
\end{align*}
For $z\in [1-\e,1]$, $\theta_\e$ is increasing, as by concavity of $g$, we have that $g'(z)\geq g'(0)=0$. In addition:
\begin{align*}
\theta_\e'(z)z=g'(z)z\leq g'(1-\e)z\leq g'(1-\e)=\frac{(1-\e)f'(1-\e)}{1-\e}\leq \frac{\e}{1-\e}.
\end{align*}
Finally, by construction, we have that $\theta_\e\in W^2([0,1],[0,1])$
\end{proof}

Using the function $\theta_\e$ from Lemma \ref{lemma-techical}, for $\e>0,B>0$ to be determined later, we choose $u(z)=\mu(z)z$ with:
\begin{align*}
\mu: z \mapsto \left\{ \begin{array}{ll}
\frac{1}{\chi} & \text{ if }z\leq 0 \\
\frac{1}{\chi}-\theta_\e\left(\frac{z}{B}\right)\left(\frac{1}{\chi}+\frac{ 1}{\norm{\dot{\bar{x}}}_\infty}\right) & \text{ if }z\in(0,B] \\
-\frac{1}{\norm{\dot{\bar{x}}}_\infty} & \text{ if }z>B \\
\end{array}\right..
\end{align*}
\begin{itemize}
\item Notice that by Bound (\ref{AuxBd1}), we have that for $t\geq t_0, z<0$, $\gamma(t,z) = u'^2+ u'' -\dot{\bar{x}}(t)u'+\chi u'=\left(\frac{1}{\chi}\right)^2-\dot{\bar{x}}(t)\left(\frac{1}{\chi}\right)+\chi \left(\frac{1}{\chi}\right)\geq \frac{\delta}{\chi}\geq 2\eta$.
\item Additionally by Bound (\ref{AuxBd2}), we have that for $t\geq t_0, z>B$, $\gamma(t,z) = u'^2+ u'' -\dot{\bar{x}}(t)u'+1 =g_t\left( -\frac{ 1}{\norm{\dot{\bar{x}}}_\infty} \right)\geq 2\eta$.
\item Then for $t \geq t_0, z\in (0,B)$, first notice that $\mu(z)\in I$ and we will show that for $\e>0$ small enough and $B>0$ big enough, we have $\left| \gamma(t,z)- g_t\left(\mu(z)\right)\right|\leq \eta$. \\
We have that:
\begin{align*}
u'(z)=\mu(z) - \frac{z}{B}\theta_\e'\left(\frac{z}{B} \right)\left( \frac{1}{\chi}+\frac{ 1}{\norm{\dot{\bar{x}}}_\infty}\right).
\end{align*}
But according to Lemma \ref{lemma-techical}, $\sup_{z\in[0,B]} \theta_\e'\left(\frac{z}{B}\right)\frac{z}{B} \leq \sup_{z\in[0,1]} \theta_\e'(z)z \leq \e$. Hence for $\e>0$ small enough the quantity $\left|u'(z)-\mu(z)\right|$ can be bounded uniformly in time by any arbitrary positive constant. Similarily:
\begin{align*}
u''(z) = -\frac{1}{B}\underbrace{\left(\frac{z}{B}\theta_\e''\left(\frac{z}{B} \right)\left(\frac{1}{\chi}+\frac{ 1}{\norm{\dot{\bar{x}}}_\infty}\right)+2\theta_\e'\left(\frac{z}{B} \right)\left(\frac{1}{\chi}+\frac{ 1}{\norm{\dot{\bar{x}}}_\infty}\right) \right)}_{\text{bounded by }C\left(\norm{\theta_\e'}_\infty +\norm{\theta_\e''}_\infty \right)}.
\end{align*} 
For $B>0$ big enough, $|u''(z)|$ can be bounded by any arbitrary positive constant. Therefore, we can pick $\e>0$ and $B>0$ such that $\left| \gamma(t,z)- g_t\left(\mu(z)\right)\right|\leq \eta$. Then by Bound (\ref{AuxBd2}), we have that for $t \geq t_0, z\in (0,B)$:
\begin{align*}
\gamma(t,z)\geq \eta.
\end{align*}
\end{itemize}
As an intermediary conclusion, on each interval $(-\infty,0],[0,B],[B,+\infty)$, $\gamma$ is lower bounded by the positive constant $\eta>0$.

Therefore, if we consider $\omega$ the solution to Equation for $t\geq t_0$:
\begin{align}
\label{EQN-omega}
\dt \omega -\dzz \omega -\dz\left(\beta(t,z)\omega \right)=0\\
\omega(t_0,\cdot)=e^{u(\cdot)}\r(t_0,\cdot). \nonumber
\end{align}
Then $\omega(t,z)e^{\eta (t-t_0)}$ is a subsolution of Equation (\ref{EQN-w}). Of note by the asymptotic properties of $e^u$, we have that $\omega(t_0,\cdot)\in L^1(\R)$. Moreover, Equation (\ref{EQN-omega}) is under conservative form and hence mass is conserved. Without loss of generality, we suppose that $\int_\R \omega(t_0,z)dz=1$ and for every $t\geq t_0, \int_\R \omega(t,z)dz=1$. Hence for $t\geq t_0$:
\begin{align*}
\int_\R e^{u(z)}\r(t,z)dz\geq e^{\eta (t-t_0)}
\end{align*}

2. In the next step, we show that the mass of $\r$ in $L^1(e^u dz)$ is not exclusively concentrated on $\R_-$. More precisely, we show that $\liminf_{t\to\infty}\int_{t-4}^t \int_{\R_+} \omega(s,z)dzds>0$. We start by considering the quantity:
\begin{align*}
I(t)=\int_{\R_-} \left(e^{-\frac{z}{2\chi}}-1+\frac{z}{2\chi} \right)\omega(t,z) dz.
\end{align*}
Notice that for $z<0$, we have $\beta(t,z)=\dot{\bar{x}}(t)-\frac{2}{\chi}-\chi=\dot{\bar{x}}(t) - \left(\chi+\frac{1}{\chi}\right) - \frac{1}{\chi}\leq\dot{\bar{x}}(t) - \sigma^* - \frac{1}{\chi}\leq -\delta	- \frac{1}{\chi}$ and that this quantity is finite, since $\omega$ will be dominated by $e^{\frac{z}{\chi}}$ for $z<0$ and so the following series of integration by parts is justified:
\begin{align*}
\dot{I}(t)&=\int_{\R_-} \left(e^{-\frac{z}{2\chi}}-1+\frac{z}{2\chi} \right)(\dzz\omega+\dz\left(\beta\omega\right)) dz \\
&=\left[\left(e^{-\frac{z}{2\chi}}-1+\frac{z}{2\chi} \right)(\dz\omega(t,z)+\beta(t,z)\omega(t,z)) \right]_{z=-\infty}^{z=0} +\frac{1}{2\chi}\int_{\R_-} \left(e^{-\frac{z}{2\chi}}-1 \right)(\dz\omega+\beta\omega) dz \\
&=\frac{1}{4\chi^2}\int_{\R_-} e^{-\frac{z}{2\chi}}\omega dz +\frac{1}{2\chi}\int_{\R_-} \left(e^{-\frac{z}{2\chi}}-1 \right)\beta\omega dz\\
&= \frac{1}{2\chi}\int_{\R_-}\left(\frac{1}{2\chi}+\beta \right)\left( e^{-\frac{z}{2\chi}}-1\right)\omega dz+ \frac{1}{4\chi^2}\int_{\R_-} \omega dz \\
&=\frac{1}{2\chi}\int_{\R_-}\underbrace{\left(\frac{1}{2\chi}+\beta \right)}_{\leq -\delta	-\frac{1}{2\chi}}\left( e^{-\frac{z}{2\chi}}-1+\frac{z}{2\chi}\right)\omega dz+ \frac{1}{4\chi^2}\underbrace{\int_{\R_-} \omega dz}_{\leq \int_\R \omega dz} \underbrace{ -\frac{1}{2\chi}\int_{\R_-}\left(\frac{1}{2\chi}+\beta \right)\frac{z}{2\chi}\omega dz}_{\leq 0}\\
&\leq -\frac{1}{2\chi}\left(\delta+\frac{1}{2\chi	} \right)I(t) + \frac{1}{2\chi}.
\end{align*}
Thus by Grönwall's Lemma, we obtain:
\begin{align*}
I(t)\leq I(t_0)e^{-\frac{1}{2\chi}\left(\delta+\frac{1}{2\chi	} \right)(t-t_0)} + \frac{1}{4\chi^2} \leq I(t_0) + \frac{1}{4\chi^2}.
\end{align*}
Furthermore $z\mapsto e^{-\frac{z}{2\chi}}-1+\frac{z}{2\chi}$ is decreasing on $\R_-$, hence by a Markov inequality, we obtain that:
\begin{align*}
\int_{-\infty}^{-h} \omega(t,z)dz \leq \frac{I(t)}{e^{\frac{h}{2\chi} }-1-\frac{h}{2\chi}  }  \leq \frac{8\chi^2}{h^2} \left( I(t_0) + \frac{1}{4\chi^2}\right),
\end{align*}
where we haved used that $e^{\frac{h}{2\chi} }-1-\frac{h}{2\chi} \geq \frac{h ^2}{8\chi^2}$. Hence, if we choose $h$ sufficiently large then for $t\geq t_0$, we have:
\begin{align*}
\int_{-\infty}^{-h} \omega(t,z)dz \leq \frac{1}{2}.
\end{align*}
Next, we use a parabolic Harnack inequality, such as it is stated in Theorem 1.1 in \cite{trudinger1968}, to obtain the following Lemma:
\begin{lemma}
Let $Q_1:=(-1,0)\times (-h,h)$ and $Q_2:=(-4,-2)\times (-h,h)$, then there exists a constant $C>0$, such that for every $t_2>t_0+4$, we have the following inequality:
\begin{align}
\label{harnack}
\sup_{(t_2,0)+Q_2} \omega \leq C\inf_{(t_2,0)+Q_1} \omega.
\end{align}
\end{lemma}
We will use Inequality (\ref{harnack}) to establish that for every $t_2>t_1+4$:
\begin{align}
\label{csq-harnack}
\int_{t_2-4}^{t_2} \int_{\R_+} \omega(s,z)dzds \geq \frac{1}{1+2C}.
\end{align}
Either, for every $s\in (0,1)$, we have that $\int_\R \omega(t_2-s,z)dz\geq \frac{1}{1+2C}$ and then Inequality (\ref{csq-harnack}) follows. Or, there exists $s\in (0,1)$, such that $\int_\R \omega(t_2-s,z)dz< \frac{1}{1+2C}$. Then:
\begin{align*}
\frac{1}{1+2C}>\int_{\R_+} \omega(t_2-s,z)dz\geq \int_0^h \omega(t_2-s,z)dz \geq h \inf_{(t_2,0)+Q_1} \omega.
\end{align*}
By using Inequality (\ref{harnack}), we then have that for every $s\in(-4,-2)$:
\begin{align*}
\int_{-h}^0 \omega(t-s,z)dz \leq h\sup_{(t_2,0)+Q_2} \omega \leq C h \inf_{(t_2,0)+Q_1} \omega \leq \frac{C}{1+2C}.
\end{align*} 
But:
\begin{align*}
\int_{\R_+} \omega(t-s,z)dz= 1-\int_{-\infty}^{-h} \omega(t-s,z)dz - \int_{-h}^0 \omega(t-s,z)dz \geq \frac{1}{2} - \frac{C}{1+2C}.
\end{align*}
Hence:
\begin{align*}
\int_{t_2-4}^{t_2-2} \int_{\R_+} \omega(s,z)dz ds \geq \frac{1}{1+2C}.
\end{align*}
Thus, we have established Inequality (\ref{csq-harnack}).\\

3. In the final step we show that the last result contradicts with the condition that $N(t,0)=N_\thresh$. We multiply Equation (\ref{movingmodel:b}) by $e^u$ and integrate over $\R_+$:
\begin{align*}
\frac{d}{dt}\int_{\R_+}e^u N dz =& \dot{\bar{x}}\int_{\R_+}e^u\dz N dz +D \int_{\R_+}e^u\dzz N -  \int_{\R_+}\r e^u N 
\\
=&-\dot{\bar{x}} \left(N(t,0)+\int_{\R_+}N u'e^u dz \right)\\
&+D\left(\underbrace{-\dz N(t,0)}_{\leq 0} +
\frac{  N(t,0)}{\chi}+\int_{\R_+}\left(u''+u'^2\right)e^u N dz\right)-  \int_{\R_+}  \r e^u Ndz
\\
\leq& C(1+\norm{\dot{\bar{x}}}_\infty) \norm{N}_\infty-   N_\thresh \int_{\R_+} \r e^u dz\\
\leq& C(1+\norm{\dot{\bar{x}}}_\infty)  -   N_\thresh \int_{\R_+} \omega e^{\eta (t-t_0)} dz,
\end{align*}
where we have used the fact that $e^u$ and its derivatives are integrable, $N$ is bounded above by $1$ and below by $N_\thresh$ on $\R_+$ and $\r e^u$ is bounded below by $\omega e^{\eta (t-t_0)}$ (the latter being a subsolution). Finally, we integrate between $[t-4,t]$ and obtain:
\begin{align*}
\int_{\R_+}e^u N(t,z)dz \leq& \int_{\R_+}e^u N(t-4,z)dz +4C(1+\norm{\dot{\bar{x}}}_\infty) -   N_\thresh  \int_{t-4}^t\int_{\R_+}e^{\eta (s-t_0)} \omega  dzds\\
\leq& \int_{\R_+}e^u dz +C(1+\norm{\dot{\bar{x}}}_\infty) -   N_\thresh e^{\eta(t-4-t_0)} \int_{t-4}^t\int_{\R_+} \omega  dz\\
\leq & C(1+\norm{\dot{\bar{x}}}_\infty)  -\frac{    N_\thresh e^{\eta(t-4-t_0)}}{1+2C}.
\end{align*}
By letting $t\to+\infty$, we have that $\int_{\R_+}e^u N(t,z)dz<0$, which is a contradiction.
\end{proof}

\section{Traveling Waves for a Two-Velocity System with Persistence}
\label{Sect-Kinetic}

In this Section, we exhibit all subsonic ($\s<\e^{-1}$) traveling wave solutions to System (\ref{kineticmodel:main}). It is known \cite{bouin2014} that in hyperbolic models with growth supersonic traveling wave solutions can exist, but for the sake of concision we discard them in this discussion. Furthermore by following the terminology in \cite{bouin2014}, there exist a parabolic regime $\e^{-2}> 1$ and a hyperbolic regime $\e^{-2}<1$. Of note, the relevant quantities to compare are the tumbling rate, normalized to $\e^{-2}$ here, and the growth rate, normalized to $1$ here. Therefore we write the parabolic regime as $\e^{-2}> 1$ and not as $\e^{-1}>1$, which is equivalent in our case. Theorem \ref{thmkinetic} notably states that subsonic traveling waves only exist in the parabolic regime, which was already observed in another model \cite{bouin2014}.

We will procede similarily than in the parabolic case (Section \ref{Sect-TW}) and consider that $\dz N>0$. In that case Equation (\ref{kineticmodel:main}) reduces in the moving frame of reference to:
\begin{subequations}\label{kineticwave:main}
\begin{align}\label{kineticwave:a} 
\text{for }z<0,\hspace{.2cm} 
&\left\{
\begin{array}{l}
(-\s+ \e^{-1})  {f^+}'   = \frac{\e^{-2}}2 \left( \left(1+\e \chi \right)f^--\left(1-\e \chi \right)f^+ \right)\\
(-\s- \e^{-1}) {f^-}'   = \frac{\e^{-2}}2 \left( \left(1-\e \chi \right)f^+-\left(1+\e \chi \right)f^-\right)  
\end{array}
\right.
\\ \label{kineticwave:b} 
\text{for }z>0,\hspace{.2cm} 
&\left\{
\begin{array}{l}
(-\s+ \e^{-1}) { f^+}'   = \frac{\e^{-2}+1}{2}f^- -\frac{\e^{-2}-1}{2}f^+\\
(-\s- \e^{-1}) { f^-}'   =  \frac{\e^{-2}+1}{2}f^+ -\frac{\e^{-2}-1}{2}f^-
\end{array}
\right.
\end{align}
\end{subequations}

\begin{thm}
\label{thmkinetic}
Assume that $\dz N>0$. In the parabolic regime $\e^{-2}> 1$, there exists a minimal speed $\s^*\in(1,\e^{-1})$, such that for any $\s\in[\s^*,\e^{-1})$, there exists a corresponding bounded and nonnegative traveling wave profile $(f^{+,\s},f^{-,\s},N^\s)$. In addition, for $\s\in [\s^*,\e^{-1})$ fixed, the traveling wave profile $(f^{+,\s}(z),f^{-,\s}(z),N^\s(z))$ is unique.  For $\s\in [0,\s^*)$, there does not exist a traveling wave profile. The expression of $\s^*$ is given by Formula (\ref{kinspeedformula}) and depends on the value of $\chi$:
\begin{itemize}
\item[\textendash] If $\chi\in (1,\e^{-1})$, then $\s^*=\frac{\chi+\frac{1}{\chi}}{1+\e^2}$. Note that in that case $\s^*<\e^{-1}$.
\item[\textendash] If $\chi\leq 1$, then $\s^*=\s_\text{F/KPP}:=\frac{2}{1+\e^2}$.
\end{itemize}
In the hyperbolic regime $\e^{-2}< 1$, there doesn't exist any subsonic traveling wave profile, \textit{i.e} a wave travaling with speed $\s<\e^{-1}$.

Furthermore, in the parabolic regime $\e^{-2}> 1$, for $\s\in[\frac{2}{1+\e^2}, \e^{-1})$, define $\mu_\pm(\s):=\frac{\s(1-\e^{2})\pm\sqrt{\s^2(1+\e^{2})^2-4}}{2(1-\e^{2}\s^2)}$. We then have the inequality, for $\s>\s_{F/KPP}$:
$$0<\mu_-(\s)<\mu_-\left(\s_\text{F/KPP}\right)=1=\mu+\left(\s_\text{F/KPP}\right)<\mu+(\s) 
$$
The functions $f^{\pm ,\s}$ have the following behavior for $z>0$:
\begin{itemize}
\item[\textendash] for $\s\in (\s^*,\e^{-1}), z>0, f^{\pm ,\s}(z)= A^{\pm} e^{-\mu_-(\s)z}+B^{\pm}e^{-\mu_+(\s^*)z}$
\item[\textendash] for $\chi>1, \s=\s^*=\frac{\chi+\frac{1}{\chi}}{1+\e^2}, z>0, f^{\pm,  \s^*}(z)=B^{\pm} e^{-\mu_+(\s^*)z}$ and $\mu_+(\s^*)=\frac{\chi(1+\e^2)}{1-\e^2 \chi^2}$
\item[\textendash] for $\chi<1, z>0, \s=\s_\text{F/KPP}, f^{\pm \s_{F/KPP}}(z)=(A^{\pm}z+B^{\pm})e^{-z}$
\item[\textendash] for $\chi= 1, z>0, \s=\s_\text{F/KPP}, f^{\pm \s_{F/KPP}}(z)=B^{\pm}e^{-z}$
\end{itemize}
\end{thm}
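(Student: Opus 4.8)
The plan is to transpose, \emph{mutatis mutandis}, the reasoning of Theorem~\ref{thmparwave} to the hyperbolic setting, reducing the problem to linear ordinary differential equations with piecewise-constant coefficients. Under the standing assumption $\dz N>0$, System~(\ref{kineticmodel:main}) reduces in the moving frame to the system~(\ref{kineticwave:main}) for $(f^+,f^-)$, complemented at the interface $z=0$ by continuity of $f^+$ and of $f^-$; no Rankine--Hugoniot correction beyond this is needed, since the transport fluxes $\pm\e^{-1}f^\pm$ are then automatically continuous across $z=0$. It is convenient to pass to $\r=\frac{f^++f^-}{2}$ and the flux $j=\frac{f^+-f^-}{2}$: eliminating $j$ one finds that $\r$ solves, for $z>0$, the constant-coefficient equation $(1-\e^2\s^2)\r''+\s(1-\e^2)\r'+\r=0$, that for $z<0$ it solves a first-order equation of the form $(1-\e^2\s^2)\r'=(\chi-\s)\r+\text{const}$, and that continuity of $j$ at $z=0$ amounts to the $C^1$-jump relation $\r'(0^+)-\r'(0^-)=-\frac{\chi-\e^2\s}{1-\e^2\s^2}\,\r(0)$. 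In the limit $\e\to0$ this recovers exactly Problem~(\ref{waverho}).

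One then solves this piecewise problem following Section~\ref{Sect-TW}. A mass/flux balance, obtained by integrating the conservative form of the density equation over $\R$ and using $\r(+\infty)=0$ together with the boundary values of $\r$ and $j$ at $-\infty$, forces $\chi<\s$; consequently the exponential mode on $\R_-$ is unbounded at $-\infty$, so $\r$ and $f^\pm$ are constant there, with $f^+:f^-=(1+\e\chi):(1-\e\chi)$. On $z>0$ the associated characteristic polynomial $(1-\e^2\s^2)\mu^2-\s(1-\e^2)\mu+1$ has discriminant proportional to $\s^2(1+\e^2)^2-4$: when it is negative, i.e.\ $\s<\s_\text{F/KPP}=\frac{2}{1+\e^2}$, the roots are complex and $\r$ oscillates, which nonnegativity excludes; when it is nonnegative the roots are $-\mu_\pm(\s)$ with the stated expression, and in the subsonic parabolic regime $0<\mu_-(\s)\le\mu_+(\s)$. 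The hyperbolic regime is already visible here: if $\e^{-2}<1$ then $1-\e^2<0$ while $1-\e^2\s^2>0$ for subsonic $\s$, so both characteristic exponents on $z>0$ have negative real part (both $\mu_\pm(\s)<0$ when real), whence the only solution bounded as $z\to+\infty$ is $f^\pm\equiv0$ on $\R_+$; continuity at $0$ then forces the left-hand constant to vanish, so no nontrivial subsonic traveling wave exists.

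For $\s>\s_\text{F/KPP}$ in the parabolic regime, the two free constants of $\r$ on $z>0$ are determined, up to an overall positive factor, by continuity of $\r$ and by the $C^1$-jump at $0$; this yields $\r^\s$ as an explicit combination of $e^{-\mu_-(\s)z}$ and $e^{-\mu_+(\s)z}$, the coefficient of the slowly decaying mode being proportional to $\mu_+(\s)-c_\s$ with $c_\s:=\frac{\chi-\e^2\s}{1-\e^2\s^2}$, and $f^{\pm,\s}$ are recovered from $\r^\s$ through the eigenvectors of~(\ref{kineticwave:main}). As in the parabolic case, $\r^\s$, equivalently $f^{\pm,\s}$, is nonnegative on $\R_+$ if and only if that coefficient is nonnegative, i.e.\ $\mu_+(\s)\ge c_\s$. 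Since $\mu_+(\cdot)$ is increasing and a direct computation gives $c_{\s^*}=\mu_+(\s^*)=\frac{\chi(1+\e^2)}{1-\e^2\chi^2}$ at $\s^*=\frac{\chi+1/\chi}{1+\e^2}$, this condition is equivalent to $\s\ge\frac{\chi+1/\chi}{1+\e^2}$ when $\chi>1$, and is automatic for every $\s\ge\s_\text{F/KPP}$ when $\chi\le1$. The borderline $\s=\s_\text{F/KPP}$ is treated by the double-root ansatz $(Az+B)e^{-\mu_0 z}$ with $\mu_0=\frac{1+\e^2}{1-\e^2}$: matching fixes the sign of $A$, which enforces $\chi\le1$ and makes $A$ vanish when $\chi=1$, giving a pure exponential. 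Collecting all cases yields Formula~(\ref{kinspeedformula}), and $\s^*\in(1,\e^{-1})$ follows from $\chi+\frac1\chi\ge2$, $\e<1$ and the monotonicity of $x\mapsto x+\frac1x$ on $(1,\e^{-1})$ (respectively $(1-\e)^2>0$ when $\chi\le1$).

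It remains to construct $N^\s$ and to fix the normalisation, which reproduces verbatim the corresponding step in the proof of Theorem~\ref{thmparwave}: with $f^{\pm,\s}$, hence $\r^\s$, determined up to a factor $\r_\infty>0$, the linear equation for $N$ in~(\ref{kineticwave:main}) together with $N(+\infty)=1$ has a unique solution $N^\s_{\r_\infty}$, and the map $\r_\infty\mapsto N^\s_{\r_\infty}(0)$ is continuous, strictly decreasing, with range $(0,1)\ni N_\thresh$, which determines $\r_\infty$ uniquely. The main obstacle is the bookkeeping of the third paragraph: identifying the precise nonnegativity threshold and checking that it is exactly $\s^*=\frac{\chi+1/\chi}{1+\e^2}$ with the correct small-bias/large-bias dichotomy, the role played by the condition $\mu_+(\s)\ge\chi$ in Section~\ref{Sect-TW}. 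The extra care compared to the parabolic case stems from the vector-valued nature of $(f^+,f^-)$, since one must track the eigenvectors so that nonnegativity of $\r$ and of the individual $f^\pm$ coincide, and from the degenerate double-root case at $\s_\text{F/KPP}$.
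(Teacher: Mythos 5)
Your proposal follows a genuinely different route from the paper's. The paper works directly with the vector $(f^+,f^-)$ and the matrices $A_\pm$, diagonalizing them, showing separately that the components of the relevant eigenvectors share a sign, and phrasing the nonnegativity threshold as $g(\s):=\mu_+(\s)(\s-\chi)-1\geq 0$. You instead eliminate the flux $j=\frac{f^+-f^-}{2}$ to obtain a scalar second-order equation $(1-\e^2\s^2)\r''+\s(1-\e^2)\r'+\r=0$ on $\R_+$, a first-order equation on $\R_-$, and a $C^1$-jump relation $\r'(0^+)-\r'(0^-)=-\frac{\chi-\e^2\s}{1-\e^2\s^2}\r(0)$. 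These computations are all correct; the characteristic polynomial coincides with the paper's, the hyperbolic-regime argument via the signs of trace and determinant is sound, and your nonnegativity condition $\mu_+(\s)\geq c_\s$ with $c_\s:=\frac{\chi-\e^2\s}{1-\e^2\s^2}$ is equivalent to $g(\s)\geq 0$ (multiply through by $1-\e^2\s^2$ and use the characteristic relation $(1-\e^2\s^2)\mu_+=\s(1-\e^2)-1/\mu_+$). This is a clean parallel to Section \ref{Sect-TW} and arguably more transparent than the matrix route.

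There is one real gap, which you flag but do not close: showing that nonnegativity of the scalar $\r$ is equivalent to nonnegativity of each of $f^+=\r+j$ and $f^-=\r-j$. That equivalence is precisely what the paper establishes by proving the two components of the relevant eigenvectors have the same sign, via the check $P(-1/\s)>0$ and $P'(-1/\s)\geq 0$. In your $(\r,j)$ variables, the corresponding step is to verify that along each characteristic mode the ratio $j/\r$ lies in $(-1,1)$; without it, your criterion $\mu_+(\s)\geq c_\s$ only certifies $\r\geq 0$, not $f^\pm\geq 0$. This has to be supplied; it is not merely bookkeeping.

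Finally, a small but genuine discrepancy with the statement as written: you give the double exponent at $\s=\s_\text{F/KPP}$ as $\mu_0=\frac{1+\e^2}{1-\e^2}$, whereas the theorem asserts $\mu_\pm(\s_\text{F/KPP})=1$ and writes the critical profile as $(A^\pm z+B^\pm)e^{-z}$. Evaluating the displayed formula for $\mu_\pm$ at $\s_\text{F/KPP}=\frac{2}{1+\e^2}$ indeed yields $\frac{1+\e^2}{1-\e^2}$, which agrees with your value and with the limit $\e\to 0$ discussed after the proof, but not with the literal $e^{-z}$ appearing in the theorem and in Equation (\ref{kineticFaux}). Your value is the correct one; the $e^{-z}$ in the paper should be read as $e^{-\mu_+(\s_\text{F/KPP})z}$.
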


As before note that $\frac{\chi+\frac{1}{\chi}}{1+\e^2} \geq \frac{2}{1+\e^2} = \s_\text{F/KPP}  $, with equality if and only if $\chi = 1$.

\begin{proof}
The proof relies on similar arguments than the proof in the parabolic case (Section \ref{Sect-TW}). Since we are looking for subsonic solution, we suppose througout the proof that $\s^2<\e^{-2}$.\\
Set $F(z)=\begin{pmatrix}
f^+(z)\\
f^-(z)
\end{pmatrix}$, $A_-=\frac{\e^{-2}}{2}\begin{pmatrix}
-\frac{1-\e\chi}{\e^{-1}-\s} & \frac{1+\e\chi}{\e^{-1}-\s}\\
-\frac{1-\e\chi}{\e^{-1}+\s}& \frac{1+\e\chi}{\e^{-1}+\s}
\end{pmatrix}$ and $A_+=\frac{1}{2}\begin{pmatrix}
-\frac{\e^{-2}-1}{\e^{-1}-\s} & \frac{\e^{-2}+1}{\e^{-1}-\s}\\
-\frac{\e^{-2}+1}{\e^{-1}+\s}& \frac{\e^{-2}-1}{\e^{-1}+\s}
\end{pmatrix}$. Then for $z<0, F'(z)=A_- F(z)$ and for $z>0, F'(z)=A_+ F(z)$.

As in the parabolic case (Section \ref{Sect-TW}), the characteristic polynomial of $A_-$ has two roots $0$ and $\frac{\e^{-2}}{2}\left( \frac{1+\e\chi}{\e^{-1}+\s}-\frac{1-\e\chi}{\e^{-1}-\s}\right)$, the later being negative, by an argument similar to the proof of Theorem \ref{thmparwave}. Therefore there exist a constant $a\in \R$, such that $F(z)=a\begin{pmatrix}
1+\e\chi\\
1-\e\chi
\end{pmatrix}$. The negativity of the second root also shows that there cannot exist a traveling wave profile with velocity $\s<\chi $.

The characteristic polynomial of $A_+$ is (up to a multiplicative constant) $P(X)=(\e^{-2}-\s^2)X^2+\s(\e^{-2}-1)X+\e^{-2}$. In the hyperbolic regime $\e^{-2}<1$, we have that $P(0)>0$ and $P'(0)<0$. But the leading coefficient of $P$ is positive, hence the roots of $P$ have positive real part, which is in contradiction with the fact that we are looking for a bounded solution. Hence in the hyperbolic regime, there do not exist any (subsonic) traveling wave solutions. 

For the rest of the proof, we suppose that we are in the parabolic regime $\e^{-2}>1$. The discriminant of the characteristic polynomial $P$ is $ \s^2(\e^{-2}-1)^2-4\e^{-2}(\e^{-2}-\s^2) = \s^2(\e^{-2}+1)^2-4\e^{-4}  $. As in the parabolic case (Section \ref{Sect-TW}) $\s=\s_{F/KPP}$ cancels the discriminant and we yield the condition that $\s\geq \s_{F/KPP}$, since otherwise we would have complex roots and oscillating functions.

Suppose $\s>\s_\text{F/KPP}$, the roots of the characteristic polynomial are then $-\mu_\pm(\s)$. By continuity of $F$ at $z=0$ and elementary computations, we find that:
\begin{align*}
F(z)=a\frac{\mu_+(\s-\chi)-1}{\e^{-2}(\mu^+-\mu^-)}\begin{pmatrix}
\mu_-(\s)(\e^{-1}+\s)-1 \\
\mu_-(\s)(\e^{-1}-\s)+1
\end{pmatrix} e^{-\mu_-(\s)z}-a\frac{\mu_-(\s-\chi)-1}{\e^{-2}(\mu^+-\mu^-)}\begin{pmatrix}
\mu_+(\s)(\e^{-1}+\s)-1 \\
\mu_+(\s)(\e^{-1}-\s)+1
\end{pmatrix} e^{-\mu_+(\s)z}.
\end{align*}

First we show that the two components of the vectors are of the same sign. Indeed $(\mu_\pm(\e^{-1}+\s)-1)(\mu_\pm(\e^{-1}-\s)+1)\geq 0 \iff -\mu_\pm \leq -\frac{1}{\s}$. Or equivalently $-\frac{1}{\s}$ is bigger than the two roots of the characteristic polynomial $P$. But $P\left(-\frac{1}{\s}\right)=\frac{1}{\e^2\s^2}>0$ and $P'\left(-\frac{1}{\s}\right)=\frac{1}{\s(\e^{-2}+1)}\left( \s^2-\frac{2}{1+\e^{2}}\right)\geq \frac{1}{\s(\e^{-2}+1)}\left( \s^2-\frac{4\e^{-4}}{(\e^{-2}+1)^2}\right)\geq 0$, where we used the fact that $\frac{4\e^{-4}}{(\e^{-2}+1)^2}\geq \frac{2}{1+\e^{2}}$, which is equivalent to $\e^{-2}\geq 1$. Hence the components are of the same sign, that is the positive sign, since the second component of each vector is positive.

Therefore we observe that $F$ is positive if and only if $g(\s):=\mu_+(\s)(\s-\chi)-1\geq 0$. $g$ is an increasing function, as $g'(\s)=\mu_+(\s)+\mu_+'(\s)(\s-\chi)>0$ (one easily checks that $\mu_+'(\s)>0$). One checks that in the case $\chi\leq 1$, $g(\s_{F/KPP})\geq 0$, which establishes existence of waves for $\s>\s_{F/KPP}$, and that in the case $\chi> 1$, $g(\s^*)= 0$, which establishes the existence of waves for $\s\geq \s^*=\frac{\chi+\frac{1}{\chi}}{1+\e^2}$ and the nonexistence of waves for $\s\in (\s_{F/KPP},\s^*)$. 

Suppose $\s=\s_\text{F/KPP}$, then one shows that for $z\geq 0$:
\begin{align}
\label{kineticFaux}
F(z)=A \begin{pmatrix}
1+\e\chi+z\frac{(1+\e^2)^2(1-\chi)}{(1-\e^2)\left(1-\e\right)}\\
1-\e\chi+z\frac{(1+ \e^2)^2(1-\chi)}{(1-\e^2)\left(1+\e\right)}
\end{pmatrix}e^{-z} .
\end{align}

Hence $F$ is positive if and only if $\chi\leq 1$, which establishes the criterion for existence in the last remaining case.

The decay properties follow immediately and the existence of the profile for $N$ is treated exactly as in the parabolic case.
\end{proof}

Let us make some comments on Theorem \ref{thmkinetic} to show how it is linked to Theorem \ref{thmparwave} in the limit $\e\to 0$. First of all, considering Formula (\ref{kinspeedformula}) for $\s^*$ given by Theorem \ref{thmkinetic}, we observe that in the limit $\e\to0$, Formula (\ref{speedformula}) given by Theorem \ref{thmparwave} is recovered. Furthermore, the limit of all the values $\mu_\pm(\s^*)$ coincide and so do the shapes of the wave. For example, consider Expression (\ref{kineticFaux}) in the limit $\e\to0$, we obtain for $z>0$:
\begin{align*}
F(z)=A \begin{pmatrix}
1+(1-\chi)z\\
1+(1-\chi)z
\end{pmatrix}e^{-z} .
\end{align*}
As a consequence $\r(t,z)=A(1+(1-\chi)z)e^{-z}$, which is exactly the form that we obtained in the proof of Theorem \ref{thmparwave}.

\section*{Acknowledgements}

The author is extremely grateful to Vincent Calvez, whose insightful guidance has played a crucial role in the work presented here. The author also thanks Christophe Anjard, Olivier Cochet-Escartin and Jean-Paul Rieu for the fruitful collaboration, which has been the starting point of the present work. Finally, the author wishes to thank Jimmy Garnier, L{\'e}o Girardin and Fran{\c c}ois Hamel for valuable explanations and suggestions.\\
This project has received funding from the European Research Council (ERC) under the European Union’s Horizon 2020 research and innovation programme (grant agreement No 865711).

\printbibliography 

\end{document}